\documentclass[11pt]{amsart}
\usepackage{amsmath,amssymb,mathtools}
\usepackage{microtype}
\usepackage[a4paper,margin=29mm,headheight=14pt]{geometry}
\usepackage{xcolor}
\usepackage[colorlinks=true,linkcolor=blue!45!black,citecolor=blue!45!black,urlcolor=blue!45!black]{hyperref}
\numberwithin{equation}{section}
\newtheorem{theorem}{Theorem}[section]
\newtheorem{lemma}[theorem]{Lemma}
\newtheorem{proposition}[theorem]{Proposition}
\newtheorem{corollary}[theorem]{Corollary}
\theoremstyle{definition}
\newtheorem{definition}[theorem]{Definition}
\theoremstyle{remark}
\newtheorem{remark}[theorem]{Remark}
\allowdisplaybreaks[1]
\date{}
\subjclass[2020]{Primary 42B35; Secondary 42B25, 46B70}
\title[Coefficient duality for flag Besov spaces]{Coefficient duality and interpolation of Euclidean flag Besov spaces}
\keywords{Flag Besov space, coefficient duality, real interpolation, supremum endpoint, discrete reproducing formula}
\author[C. Li]{Chang Li}
\address{Department of Mathematics, Shanghai University,
Shanghai 200444, People's Republic of China}
\email{changli@shu.edu.cn}
\begin{document}
\begin{abstract}
The main purpose of this paper is to develop the duality theory of
the homogeneous Besov spaces associated with the Euclidean flag,
using the discrete Littlewood--Paley--Stein analysis. We first
establish an exact discrete sampling formula together with bounded
analysis and synthesis maps, by which the space is identified with a
sequence space of coefficients. The continuous dual is then computed
through a coefficient norm. In this computation the geometry of the
sampling rectangles plays an essential role: when the spatial
exponent is below one, the volume of a dyadic rectangle forces an
explicit shift of both smoothness indices. We devote particular
attention to the supremum sequence exponent. At that endpoint the
closure of the flag test functions is characterized by vanishing
scale tails, and the limiting tail is comparable to the distance to
this closure; every continuous functional on the full space
decomposes into a canonical regular part and a part that annihilates
the closure, and the latter may be nonzero. For finite exponents the
dual is again a flag Besov space. As applications, finite
Calder\'on sums give a quantitative approximation theorem, and the
common analysis and synthesis framework yields real and complex
interpolation as well as the lifting property. The arguments are
based on the exact sampling formula, scalar maximal estimates and
the transpose of one bounded coefficient projection.
\end{abstract}
\maketitle

\section{Introduction and statement of results}
\label{sec:intro}

The duality theory of Besov spaces is naturally expressed in
terms of the order in which the norm measures position and scale.
In a flag decomposition, the spatial coefficients at each pair of
scales form an $\ell^p$ block, and the sizes of these blocks form
an $\ell^q$ sequence. The main purpose of this paper is to use this
block structure to describe all continuous functionals on a
homogeneous flag Besov space, including the quasi-Banach range in
which the spatial exponent is below one and the endpoint at which
the sequence exponent is infinite. The geometry of the sampling
rectangles enters explicitly in this description: the volume of a
dyadic rectangle determines the shift of the smoothness indices in
the dual space.

The flag considered here is
\[
 \{0\}\subset\{0\}\times\mathbb R^m
                  \subset\mathbb R^n\times\mathbb R^m.
\]
Its Littlewood--Paley pieces combine an isotropic decomposition
on $\mathbb R^{n+m}$ with a second decomposition in the $y$
variables. This geometry arises naturally in the multiparameter
multiplier theory of M\"uller, Ricci and Stein
\cite{MRS95,MRS96} and in the theory of flag kernels developed by
Nagel, Ricci and Stein \cite{NRS01}; see also \cite{NRSW12} for
homogeneous groups. The product theories of Chang and Fefferman
\cite{CF80} and Journ\'e \cite{Journe85} provide the corresponding
multiparameter precedents.

The coefficient method has its classical foundations in
Triebel \cite{Triebel83}, Frazier and Jawerth \cite{FJ85,FJ90},
and in the reproducing-formula methods of Han and Sawyer
\cite{HS94} and Han \cite{Han00}. In the flag setting, Han and
Lu \cite{HL08,HL10} established the discrete
Littlewood--Paley--Stein theory and the associated Hardy spaces,
and Ding, Lu and Ma \cite{DLM10} developed the multiparameter
Besov and Triebel--Lizorkin spaces associated with flag singular
integrals. In the present paper we work in this Euclidean flag
geometry, specify the smoothness weights explicitly, and prove
the sampling estimates and the analysis--synthesis bounds needed
in the sequel. In particular, the Plancherel--P\'olya comparison
and the independence of the analysing pair follow directly from
these sampling estimates. Further developments of the flag Hardy
theory may be found in \cite{HLLW22,HLW19}, while inhomogeneous
characterisations were studied in \cite{WL12}. More recently,
Dou \cite{Dou26} proved the boundedness of classical
Calder\'on--Zygmund convolution operators on Euclidean flag
Besov spaces.

Yang \cite{Yang09} developed duality, embeddings and lifting in a
parabolic flag geometry, in which the first-layer dilation scales
the two groups of variables with different powers; we refer in
particular to Proposition~2.11 for Besov duality in the finite
Banach range, to Remark~2.12 for test-function closures at
supremum endpoints, and to Theorem~4.6 for a lifting result. In
the present paper the first-layer dilation is isotropic on
$\mathbb R^{n+m}$. The volume of a dyadic rectangle then
determines the change of smoothness when $p<1$, and the endpoint
argument gives a quantitative description of the little space
together with a regular--singular decomposition of the full dual.

We would like to point out that at the supremum sequence exponent
one must distinguish density from bounded approximation. For the
spaces defined below, test functions are dense when
$0<p,q<\infty$. When $0<p<\infty$ and $q=\infty$, the closure
consists precisely of those elements whose scale norms vanish at
infinity (Theorem~\ref{thm:little}), and the example in the proof
of Theorem~\ref{thm:decomposition} shows that this closure can be
proper. We estimate the distance from an element to the closure by
its limiting scale tail, identify the dual of the closure, and
separate the canonical coefficient functional from the part of a
full-space functional that annihilates the closure.

The same block structure also applies to interpolation. At fixed
spatial exponent and smoothness, the real $K$-functional reduces
to a scalar sequence $K$-functional, which yields real
interpolation in the sequence exponent throughout the stated
quasi-Banach range. Complex interpolation in the finite Banach
range and the lifting property follow from the common analysis and
synthesis maps together with the localised kernel bounds. Here and
below we use the interpolation framework of
\cite{BL76,BS88,Holmstedt70}.

\subsection{Spaces and dyadic frames}

A flag rectangle is a product $R=I\times J$, where
$I\subset\mathbb R^n$ and $J\subset\mathbb R^m$ are cubes with
sides parallel to the axes and
$\ell(I)\leq\ell(J)$. The flag maximal function is
\begin{equation}\label{eq:MF}
 M_F h(z)=\sup_{\substack{R\ni z\\R\text{ a flag rectangle}}}
                        \frac1{|R|}\int_R|h(u)|\,du.
\end{equation}

Let $\mathcal S_\infty(\mathbb R^{n+m}\times\mathbb R^m)$ consist
of the Schwartz functions $F(x,y,z)$ satisfying
\begin{equation}\label{eq:Sinfty}
 \int_{\mathbb R^{n+m}}F(x,y,z)x^\alpha y^\beta\,dx\,dy=0,
 \qquad
 \int_{\mathbb R^m}F(x,y,z)z^\gamma\,dz=0
\end{equation}
for all multi-indices and all values of the remaining variables.
Following Han--Lu \cite{HL08}, set
\begin{equation}\label{eq:pi}
 \pi F(x,y)=\int_{\mathbb R^m}F(x,y-z,z)\,dz,\qquad
 \mathcal S_F=\pi\bigl(\mathcal S_\infty
              (\mathbb R^{n+m}\times\mathbb R^m)\bigr),
\end{equation}
with the quotient Schwartz topology, and denote its continuous
dual by $\mathcal S_F'$. Pairings are linear in the distribution
and conjugate linear in the test function.

The partial-cancellation formulation is used in
\cite[Definition~1.2]{HLW19}. The following proposition identifies it,
including its topology, with the projection definition above.

\begin{proposition}[Test-space identification]\label{def:test}
The flag test space $\mathcal S_F$ is the closed subspace of
$\mathcal S(\mathbb R^{n+m})$ characterized by
\begin{equation}\label{eq:partialmoments}
 \int_{\mathbb R^m}f(x,y)y^\beta\,dy=0
 \quad\text{for every }x\in\mathbb R^n
 \text{ and every multi-index }\beta.
\end{equation}
Its quotient Schwartz topology agrees with the topology inherited
from $\mathcal S(\mathbb R^{n+m})$.
\end{proposition}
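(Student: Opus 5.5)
We prove three things in turn: $\pi$ maps into the closed subspace $\mathcal S_{0}$ of $\mathcal S(\mathbb R^{n+m})$ cut out by \eqref{eq:partialmoments}; $\pi$ is onto $\mathcal S_0$ through an explicit continuous right inverse; and the two topologies coincide.

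\emph{Inclusion.} Write $\mathcal S_0$ for the set of $f\in\mathcal S(\mathbb R^{n+m})$ satisfying \eqref{eq:partialmoments}. It is a closed subspace, since each condition is a continuous linear functional of $f$ (evaluate at $x$, then integrate against $y^\beta$). For $F\in\mathcal S_\infty$, the integral defining $\pi F$ converges with all derivatives, and standard estimates show $\pi:\mathcal S(\mathbb R^{n+m}\times\mathbb R^m)\to\mathcal S(\mathbb R^{n+m})$ is continuous. By Fubini and the substitution $y=w+z$,
\[
 \int\pi F(x,y)y^\beta\,dy=\sum_{\beta_1+\beta_2=\beta}\binom{\beta}{\beta_1}\int\!\!\int F(x,w,z)w^{\beta_1}z^{\beta_2}\,dz\,dw .
\]
Every inner $z$-integral vanishes by the second condition in \eqref{eq:Sinfty}. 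Hence $\pi(\mathcal S_\infty)\subset\mathcal S_0$.

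\emph{Surjectivity.} This is the main step. Given $f\in\mathcal S_0$, we need $F\in\mathcal S_\infty$ with $\pi F=f$. We work on the Fourier side, where $\widehat{\pi F}(\xi,\eta)=\widehat F(\xi,\eta,\eta)$. Condition \eqref{eq:partialmoments} says exactly that $\hat f(\xi,\eta)$ vanishes to infinite order on $\{\eta=0\}$. The conditions \eqref{eq:Sinfty} say that $\widehat F(\xi,\eta,\zeta)$ vanishes to infinite order at $(\xi,\eta)=0$ and on $\{\zeta=0\}$.

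Choose a smooth partition of unity $1=\chi_0(\xi,\eta)+\chi_1(\xi,\eta)$ on $\mathbb R^{n+m}\setminus\{0\}$, homogeneous of degree $0$, with $\chi_0$ supported where $|\eta|\le 2|\xi|$ and $\chi_1$ supported where $|\eta|\ge|\xi|$. Fix $\theta\in C_c^\infty(\mathbb R^m)$ equal to $1$ near $0$, and set
\[
 \widehat F(\xi,\eta,\zeta)=\hat f(\xi,\eta)\,\chi(\xi,\eta,\zeta),
\]
where $\chi$ is a smooth cutoff with $\chi(\xi,\eta,\eta)=1$. A natural first choice is $\chi=\theta(\zeta-\eta)$, corrected near $\zeta=0$ and $(\xi,\eta)=0$ using the infinite-order vanishing of $\hat f$. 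The point is that the factor $\hat f$ absorbs all singular behaviour of the homogeneous cutoffs. Near $\eta=0$, infinite-order vanishing of $\hat f$ in $\eta$ makes products like $\hat f(\xi,\eta)\cdot(\text{function singular at }\zeta=0)$ smooth, provided the cutoff is supported where $|\zeta|\sim|\eta|$. Near the origin, infinite-order vanishing of $\hat f$ at $(\xi,\eta)=0$ follows from vanishing on $\{\eta=0\}$: all derivatives of $\hat f$ at $0$ vanish.

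A cleaner alternative is to take the Schwartz-space version of the Han--Lu construction $F(x,y,z)=\sum_{j,k}\psi_j*\phi_k$ applied to $f$. Either way, the goal is a continuous linear map $E:\mathcal S_0\to\mathcal S_\infty$ with $\pi E=\mathrm{id}$. This is the main obstacle: we must check that every seminorm of $E f$ is bounded by seminorms of $f$. The check rests on a Hadamard/Taylor lemma. If $g$ is Schwartz and vanishes to infinite order on $\eta=0$, then $|\eta|^{-N}g$ has Schwartz seminorms controlled by those of $g$.

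\emph{Topology.} The quotient topology on $\mathcal S_F\cong\mathcal S_\infty/\ker\pi$ is finer than the subspace topology, because $\pi$ is continuous. Conversely, $E$ is a continuous section, so the quotient map has a continuous right inverse and the two topologies agree. As an alternative to constructing $E$, one could invoke the open mapping theorem for Fréchet spaces. $\mathcal S_0$ is closed in $\mathcal S$, hence Fréchet, and $\mathcal S_\infty$ is Fréchet. A continuous surjection between them is open, which again gives equality of topologies once surjectivity is known.
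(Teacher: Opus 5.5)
\textbf{There is a gap in the surjectivity step.} You never actually define the cutoff $\chi$, so the existence of a preimage, which is the whole content of the reverse inclusion, is asserted rather than proved.

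The other parts are fine. Your inclusion argument (binomial expansion after $y=w+z$, with each inner $z$-integral killed by \eqref{eq:Sinfty}) is correct. It is a physical-space substitute for the paper's Fourier argument, which restricts a function flat on $\zeta=0$ to the diagonal. Your topology argument is also fine. The open mapping variant is a legitimate shortcut: it needs only $Ef\in\mathcal S_\infty$ for each $f$, not seminorm control of $E$.

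The one explicit candidate you give, $\chi=\theta(\zeta-\eta)$, fails. Since $\theta=1$ near $0$, it gives $\widehat F(\xi,\eta,0)=\widehat f(\xi,\eta)$ for all small $\eta$. So $\widehat F$ does not even vanish on $\zeta=0$, and $F\notin\mathcal S_\infty$ for generic $f$. This is not a defect near $(\xi,\eta)=0$ that can be patched locally. It has to be removed on the whole region $\{|\zeta|<c|\eta|\}$, at scale $|\eta|$. The homogeneous partition $\chi_0+\chi_1$ in $(\xi,\eta)$ plays no role in this.

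The missing step is the paper's one-line choice. For $\eta\ne0$ set
\[
\widehat{\mathcal E f}(\xi,\eta,\zeta)=\widehat f(\xi,\eta)\,\rho(|\zeta|^2/|\eta|^2),
\]
and set it to $0$ on $\eta=0$. Here $\rho\in C_c^\infty((1/4,4))$ and $\rho(1)=1$. The cutoff has no $\xi$-dependence and needs no further correction. The checks are as follows.
\begin{itemize}
\item The cutoff equals $1$ on $\zeta=\eta$, so $\pi\mathcal E f=f$.
\item On the support, $|\eta|/2\le|\zeta|\le2|\eta|$. Hence $\widehat{\mathcal Ef}$ vanishes identically near every point of $\{\zeta=0,\ \eta\ne0\}$ and near every $(0,0,\zeta)$ with $\zeta\ne0$.
\item Derivatives of order $d$ of the cutoff are $O(|\eta|^{-d})$. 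These are absorbed by your Taylor/Hadamard bound $|\partial_\xi^\alpha\partial_\eta^\beta\widehat f|\le C|\eta|^K(1+|\xi|)^{-M}$ for $0<|\eta|\le1$. The result is a smooth function flat on $\{\eta=0\}$. Together with the previous point, this gives flatness on $\{\zeta=0\}$ and on $\{(\xi,\eta)=0\}$.
\item For $|\eta|\ge1$, the relation $|\zeta|\le2|\eta|$ turns decay in $\eta$ into decay in $\zeta$, so $\widehat{\mathcal Ef}$ is Schwartz.
\end{itemize}

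Your ``Han--Lu'' alternative, once written correctly, gives the same kind of cutoff:
\[
\widehat F=\widehat f(\xi,\eta)\sum_k\widehat{\psi^{(2)}}(2^{-k}\zeta)\widehat{\psi^{(2)}}(2^{-k}\eta).
\]
This is supported in $1/4\le|\zeta|/|\eta|\le4$ and equals $1$ on $\zeta=\eta$ by \eqref{eq:partition2}. With either cutoff written down and these checks made, your outline becomes the paper's proof.
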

\begin{proof}
Let $\mathcal Z_y$ denote the subspace in
\eqref{eq:partialmoments}. It is closed, since the partial-moment
maps are continuous on Schwartz space. Equivalently,
$\widehat f(\xi,\eta)$ and all its $\eta$ derivatives vanish on
$\eta=0$. The projection $\pi$ is continuous on Schwartz spaces,
and
\[
 \widehat{\pi F}(\xi,\eta)=\widehat F(\xi,\eta,\eta).
\]
The product moment conditions imply flatness of $\widehat F$ on
$\zeta=0$, so differentiation of this restriction gives
$\pi(\mathcal S_\infty)\subset\mathcal Z_y$.

For the reverse inclusion choose
$\rho\in C_c^\infty((1/4,4))$ with $\rho(1)=1$. For
$f\in\mathcal Z_y$ define $\mathcal E f$ on the Fourier side by
\[
 \widehat{\mathcal E f}(\xi,\eta,\zeta)=
 \begin{cases}
  \widehat f(\xi,\eta)\rho(|\zeta|^2/|\eta|^2),&\eta\ne0,\\
  0,&\eta=0.
 \end{cases}
\]
On its support $|\eta|/2\leq|\zeta|\leq2|\eta|$, and a derivative
of order $d$ of the cutoff is bounded by $C_d|\eta|^{-d}$.
For any multi-indices $\alpha,\beta$ and integers $K,M\geq0$,
Taylor's formula and the flatness of $\widehat f$ give
\[
 |\partial_\xi^\alpha\partial_\eta^\beta\widehat f(\xi,\eta)|
 \leq C\,\mathfrak p(f)|\eta|^K(1+|\xi|)^{-M},
 \qquad 0<|\eta|\leq1,
\]
where $\mathfrak p$ is a continuous Schwartz seminorm depending on
the displayed indices. Taking $K$ arbitrarily large absorbs every
cutoff derivative and gives a smooth extension, flat on $\eta=0$.
For $|\eta|\geq1$, the support relation and the ordinary Schwartz
estimates control all derivatives and polynomial weights in
$(\xi,\eta,\zeta)$. Thus $\mathcal E$ is a continuous linear map
from $\mathcal Z_y$ into Schwartz space. Its Fourier transform is
flat on both $\zeta=0$ and $(\xi,\eta)=0$, so
$\mathcal E f\in\mathcal S_\infty$. The restriction identity and
$\rho(1)=1$ give $\pi\mathcal E f=f$.

The induced map from
$\mathcal S_\infty/\ker\pi$ to $\mathcal Z_y$ is continuous and
bijective, and its inverse is the continuous map sending $f$ to
the class of $\mathcal E f$. This proves the topology assertion.
\end{proof}

By Hahn--Banach every continuous functional on this closed
Schwartz subspace extends to a tempered distribution. Consequently
$\mathcal S_F'$ is canonically the quotient of $\mathcal S'$ by
the annihilator of $\mathcal S_F$. Convolution with a translate of
an annular flag analysing function is independent of the chosen
tempered representative, since all such translates belong to
$\mathcal S_F$.

Choose real even Schwartz functions $\psi^{(1)}$ on
$\mathbb R^{n+m}$ and $\psi^{(2)}$ on $\mathbb R^m$, with Fourier
transforms supported in the annulus $\{1/2\leq|\cdot|\leq2\}$ in
their respective spaces; their moments of every order then
vanish. Suppose that
\begin{align}
 \sum_{j\in\mathbb Z}
  |\widehat{\psi^{(1)}}(2^{-j}\xi,2^{-j}\eta)|^2&=1
       &&((\xi,\eta)\ne0),\label{eq:partition1}\\
 \sum_{k\in\mathbb Z}
  |\widehat{\psi^{(2)}}(2^{-k}\eta)|^2&=1
       &&(\eta\ne0).\label{eq:partition2}
\end{align}
We call such a pair admissible. To construct one, take a
nonnegative smooth radial cutoff supported in the annulus,
positive on a smaller annulus whose dyadic dilates cover the
punctured space, and divide it by the square root of the sum of
the squares of its dyadic dilates. Put
\begin{equation}\label{eq:psijk}
 \psi_{j,k}(x,y)
 =\int_{\mathbb R^m}2^{j(n+m)}\psi^{(1)}
        \bigl(2^jx,2^j(y-z)\bigr)\,
                  2^{km}\psi^{(2)}(2^kz)\,dz .
\end{equation}
The notation $\varphi_{j,k}$ has the same meaning for another
admissible pair. A bump function $\psi_{j,k}$ vanishes unless
$k\leq j+1$.

For $s=(s_1,s_2)\in\mathbb R^2$, we use the smoothness weight
$2^{-js_1}2^{-(j+k)s_2}=2^{-j(s_1+s_2)}2^{-ks_2}$.
On the support of a nonzero bump function,
$|(\xi,\eta)|\asymp2^j$ and $|\eta|\asymp2^k$; thus the
exponents attached to these two frequency scales are
$s_1+s_2$ and $s_2$, respectively, with the displayed negative-sign
convention. We shall use this parametrisation throughout the paper.

\begin{definition}\label{def:B}
Let $s\in\mathbb R^2$, $0<p<\infty$ and $0<q\leq\infty$. The
homogeneous flag Besov space
$\dot B^{s,q}_{p,F}=\dot B^{s,q}_{p,F}(\mathbb R^n\times\mathbb R^m)$
consists of those $f\in\mathcal S_F'$ for which
\begin{equation}\label{eq:Bdefinition}
 \|f\|_{\dot B^{s,q}_{p,F}}
 =\left\{\sum_{j,k\in\mathbb Z}
 \bigg(2^{-js_1}2^{-(j+k)s_2}\|\psi_{j,k}*f\|_{L^p}\bigg)^q
                  \right\}^{1/q}<\infty,
\end{equation}
with the outer expression a supremum when $q=\infty$.
\end{definition}

The effective lengths and the sampling grids are
\begin{equation}\label{eq:grid}
 a_j=2^{-j},\qquad b_{j,k}=2^{-j}\vee2^{-k},\qquad
 \mathcal R_{j,k}^N
 =\{I\times J:\ell(I)=2^{-N}a_j,\
                    \ell(J)=2^{-N}b_{j,k}\},
\end{equation}
where $I,J$ range over the usual half-open dyadic cubes and
$z_R$ denotes the lower corner of $R$. We always fix an integer
$N\geq N_0\geq0$ sufficiently large for the Fourier support boxes
and the sampling absorptions in Section~\ref{sec:discrete}.
The threshold may depend on the admissible pairs and the
spatial exponents under consideration, but not on $j,k$ or
the functions being sampled. It is chosen also for the
uniformly enlarged support boxes used in regularisation.
Whenever several spaces occur in one statement, and in particular
for an interpolation couple and its asserted intermediate space,
we choose one common $N$ satisfying all these requirements.
At a fixed scale each
rectangle has volume
\begin{equation}\label{eq:Rvolume}
 |R|=2^{-N(n+m)}2^{-jn}2^{-m\min(j,k)}
 \asymp_N 2^{-jn}2^{-km}
\end{equation}
for a nonzero bump function, uniformly in $j,k$.

\begin{definition}\label{def:b}
For a labelled sequence $t=\{t_{j,k,R}\}$ write
$t_{j,k}=\sum_R t_{j,k,R}\mathbf 1_R$. The coefficient space
$\dot b^{s,q}_{p,F}$ is defined by
\begin{equation}\label{eq:bsequence}
 \begin{aligned}
 \|t\|_{\dot b^{s,q}_{p,F}}
 &=\left\{\sum_{j,k}
       \bigg(2^{-js_1}2^{-(j+k)s_2}\|t_{j,k}\|_{L^p}\bigg)^q\right\}^{1/q}\\
 &=\left\{\sum_{j,k}\left[
       2^{-js_1}2^{-(j+k)s_2}
       \left(\sum_{R\in\mathcal R_{j,k}^N}
               |t_{j,k,R}|^p |R|\right)^{1/p}
                      \right]^q\right\}^{1/q},
 \end{aligned}
\end{equation}
with the outer expression a supremum when $q=\infty$.
\end{definition}

\begin{definition}\label{def:ST}
The analysis and synthesis maps associated with an admissible
pair are
\begin{equation}\label{eq:analysis-synthesis}
 S_\psi f=\{(\psi_{j,k}*f)(z_R)\}_{j,k,R},\qquad
 T_\psi t=\sum_{j,k}\sum_{R\in\mathcal R_{j,k}^N}
       |R|\,t_{j,k,R}\psi_{j,k}(\,\cdot-z_R).
\end{equation}
\end{definition}

\begin{theorem}[Retraction and discrete norm]\label{thm:retraction}
Let $s\in\mathbb R^2$, $0<p<\infty$ and $0<q\leq\infty$. Then
\[
 S_\psi:\dot B^{s,q}_{p,F}\longrightarrow\dot b^{s,q}_{p,F},
 \qquad
 T_\psi:\dot b^{s,q}_{p,F}\longrightarrow\dot B^{s,q}_{p,F}
\]
are bounded, $T_\psi S_\psi=I$, and
\begin{equation}\label{eq:discrete-norm}
 \|f\|_{\dot B^{s,q}_{p,F}}
 \asymp\|S_\psi f\|_{\dot b^{s,q}_{p,F}}
 \asymp\inf_{T_\psi t=f}\|t\|_{\dot b^{s,q}_{p,F}}.
\end{equation}
The space $\dot B^{s,q}_{p,F}$ is quasi-Banach, continuously
embedded in $\mathcal S_F'$, and is Banach when $p,q\geq1$. The
test space $\mathcal S_F$ is continuously contained in it and is
dense when $q<\infty$.
\end{theorem}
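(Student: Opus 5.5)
The plan is to derive everything from one exact discrete reproducing formula, together with two almost-diagonal type estimates: a sampling (Plancherel--P\'olya) bound for analysis and a molecular bound for synthesis.

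\textbf{Step 1: the exact sampling formula.} By \eqref{eq:partition1}--\eqref{eq:partition2},
\[
\sum_{j,k}\widehat{\psi_{j,k}}\,\overline{\widehat{\psi_{j,k}}}=1
\]
on the complement of $\{\eta=0\}$. Hence $f=\sum_{j,k}\psi_{j,k}*\psi_{j,k}*f$ in $\mathcal S_F'$, with convergence tested against $\mathcal S_F$, whose Fourier transforms are flat on $\eta=0$ by Proposition~\ref{def:test}. The function $\widehat{\psi_{j,k}}$ is supported in the box $|\xi|\lesssim2^j$, $|\eta|\lesssim2^{\min(j,k)}$. Since $\psi_{j,k}*f$ is band-limited to this box, Shannon-type sampling on the grid $\mathcal R^N_{j,k}$ applies once $N\ge N_0$ makes the grid fine enough that the periodised supports do not overlap. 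We would then like to write
\[
\psi_{j,k}*\psi_{j,k}*f=\sum_{R\in\mathcal R^N_{j,k}}|R|\,(\psi_{j,k}*f)(z_R)\,\psi_{j,k}(\cdot-z_R),
\]
which is exactly $T_\psi S_\psi f=f$. This identity is exact provided $\widehat{\psi_{j,k}}$ itself does the job of the reconstruction kernel. I expect it does: $\widehat{\psi_{j,k}}$ is supported inside the box, and the sampling identity for a band-limited $g$ reads $g*\psi_{j,k}=\sum|R|\,g(z_R)\psi_{j,k}(\cdot-z_R)$, because the periodisation of $\widehat g$ equals $\widehat g$ on the support of $\widehat{\psi_{j,k}}$. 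Here $g=\psi_{j,k}*f$, and its transform is supported in the same box, so no overlap occurs for $N$ large.

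\textbf{Step 2: boundedness of $S_\psi$.} At fixed $(j,k)$ we need
\[
\Big(\sum_R|(\psi_{j,k}*f)(z_R)|^p|R|\Big)^{1/p}\lesssim\|\psi_{j,k}*f\|_{L^p}.
\]
For a band-limited $g$, the Peetre maximal function $\sup_{u}|g(z-u)|/(1+2^j|u_1|+2^{\min(j,k)}|u_2|)^{\lambda}$ is pointwise bounded by $(M_F|g|^r)^{1/r}(z)$ with $r<p$ and $\lambda>(n+m)/r$. This is proved by the standard mean-value argument adapted to the rectangle $a_j\times b_{j,k}$. The rectangles of this shape are flag rectangles since $a_j\le b_{j,k}$, so $M_F$ applies. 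Moreover $M_F$ is dominated by the strong maximal function, which is bounded on $L^{p/r}$. Because $z_R$ lies within a rectangle of the same shape as each point of $R$, the sample value is dominated by the Peetre function on $R$, and summing over $R$ gives the bound. The scale weights are untouched, so $\|S_\psi f\|_{\dot b}\lesssim\|f\|_{\dot B}$.

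\textbf{Step 3: boundedness of $T_\psi$ (main obstacle).} Write $f=T_\psi t$. Then $\varphi_{j',k'}*f$ collects contributions only from $|j-j'|\le1$ and $|k-k'|\le1$ (with the usual care when $k>j$, where all $k$ with the same $\min$ behave alike but $\psi_{j,k}=0$ for $k>j+1$), by the Fourier supports. This leaves only finitely many neighbouring scales, so the almost-diagonal decay in scale is trivial. It remains to bound
\[
\Big\|\sum_R|R|\,|t_{j,k,R}|\,|\psi_{j,k}*\varphi_{j',k'}|(\cdot-z_R)\Big\|_{L^p}\lesssim\|t_{j,k}\|_{L^p}.
\]
The kernel decays like $|R|^{-1}(1+2^j|x|+2^{\min(j,k)}|y|)^{-L}$. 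For $p>1$ this is a pointwise bound by $M_F t_{j,k}$. For $p\le1$ we use $r<p$ and the estimate
\[
\sum_R|t_R|(1+\mathrm{dist})^{-L}\lesssim\big(M_F(|t_{j,k}|^r)\big)^{1/r}
\]
with $L$ large. I expect this $p\le1$ Frazier--Jawerth lemma on the flag grid to be the technical heart. One must check uniformity in $j,k$, especially across the two regimes $k\le j$ and $k>j$. A further point is that the series defining $T_\psi t$ converges in $\mathcal S_F'$, which follows by pairing with $\phi\in\mathcal S_F$ and using the rapid decay of $\langle\psi_{j,k}(\cdot-z_R),\phi\rangle$ in $j$ and $k$, with decay in $k\to-\infty$ coming from the flatness of $\widehat\phi$ on $\eta=0$. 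Changing the pair (replacing $\psi$ by $\varphi$) in the definition follows by the same estimate.

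\textbf{Step 4: remaining assertions.} The norm equivalences follow from $T_\psi S_\psi=I$ together with the bounds of Steps 2 and 3. Completeness follows since $\dot B$ is a retract of the complete space $\dot b$: the space is isomorphic to the image of the bounded projection $S_\psi T_\psi$. The embedding into $\mathcal S_F'$ is the convergence estimate from Step 3. The space is Banach for $p,q\ge1$ because the quasi-norm is then a norm. For $\phi\in\mathcal S_F$, $\|\psi_{j,k}*\phi\|_{L^p}$ decays faster than any power of $2^{-|j|}2^{-|k|}$, again by flatness, so $\phi\in\dot B$ continuously. For density when $q<\infty$, truncate to finite sets of $(j,k)$ and finitely many $R$. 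The truncated $T_\psi t$ lies in $\mathcal S_F$, and the tail tends to zero by dominated convergence in $\ell^q(\ell^p)$ together with the boundedness of $T_\psi$.
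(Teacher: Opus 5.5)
Your proposal is correct and follows essentially the same route as the paper. The periodisation argument for exact sampling is the paper's Poisson summation in another form, the analysis bound comes from the Peetre maximal function, the synthesis bound from the kernel estimate plus the Frazier--Jawerth grid lemma, and completeness, the embedding into $\mathcal S_F'$ and density follow from $T_\psi S_\psi=I$. The remaining differences are minor: you dominate $M_F$ by the strong maximal function rather than by $M_m(M_{n+m})$, and you use the exact finite overlap of neighbouring scales instead of the general almost-diagonal matrix lemma, which the paper reuses for lifting and for the cross matrices in the duality section.
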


\subsection{The little space and the supremum endpoint}

Write $J=(j,k)$ and $|J|_1=|j|+|k|$. For
$0<q\leq\infty$ define the scale tail
\begin{equation}\label{eq:omega}
 \omega_{f,q}(K)
 =\left(\sum_{|J|_1>K}
      [2^{-js_1}2^{-(j+k)s_2}\|\psi_{j,k}*f\|_{L^p}]^q\right)^{1/q},
\end{equation}
with the sum replaced by a supremum when $q=\infty$, and put
$d(f)=\lim_{K\to\infty}\omega_{f,\infty}(K)$.

\begin{theorem}[The little Besov space]\label{thm:little}
Let $s\in\mathbb R^2$ and $0<p<\infty$. For
$f\in\dot B^{s,\infty}_{p,F}$,
\begin{equation}\label{eq:distance}
 d(f)\leq
 \inf_{g\in\mathcal S_F}\|f-g\|_{\dot B^{s,\infty}_{p,F}}
 \leq C d(f).
\end{equation}
Consequently the closure
$B_0=\overline{\mathcal S_F}^{\,\dot B^{s,\infty}_{p,F}}$
consists precisely of those $f$ satisfying
\begin{equation}\label{eq:little}
 \lim_{|J|_1\to\infty}
          2^{-js_1}2^{-(j+k)s_2}\|\psi_{j,k}*f\|_{L^p}=0.
\end{equation}
This condition is independent of the admissible pair.
\end{theorem}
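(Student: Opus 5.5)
Both inequalities in \eqref{eq:distance} will be proved separately. The closure characterization then follows at once, since $d(f)=0$ is exactly \eqref{eq:little}. Independence of the admissible pair will be reduced to the same two-sided estimate.

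\emph{Lower bound.} Let $g\in\mathcal S_F$. For every admissible pair, $g\in\dot B^{s,q}_{p,F}$ for all $q<\infty$ by Theorem~\ref{thm:retraction}, so the scale norms $2^{-js_1}2^{-(j+k)s_2}\|\psi_{j,k}*g\|_{L^p}$ tend to $0$ as $|J|_1\to\infty$. For $p\ge1$ the triangle inequality in $L^p$ gives $\omega_{f,\infty}(K)\le \|f-g\|_{\dot B^{s,\infty}_{p,F}}+\omega_{g,\infty}(K)$. Letting $K\to\infty$ yields $d(f)\le\|f-g\|$. For $p<1$ we use the $p$-triangle inequality. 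This gives $d(f)^p\le\|f-g\|^p+0$, hence the same inequality with constant $1$.

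\emph{Upper bound.} Let $t=S_\psi f$. By Theorem~\ref{thm:retraction}, $f=T_\psi t$, and $T_\psi$ is bounded on $\dot b^{s,\infty}_{p,F}$. Split $t=t^{\le K}+t^{>K}$ according to whether $|J|_1\le K$. The tail satisfies $\|T_\psi t^{>K}\|\le C\|t^{>K}\|_{\dot b^{s,\infty}_{p,F}}$. The discrete analogue of the Plancherel--P\'olya bound (the boundedness of $S_\psi$ applied levelwise) controls each block $2^{-js_1}2^{-(j+k)s_2}\|t_{j,k}\|_{L^p}$ by a constant times the scale norms of $f$. We need this control only for blocks with $|J|_1>K$, and the constant should not depend on $K$.

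This levelwise statement is the main obstacle. The sampling estimate of Section~\ref{sec:discrete} bounds $\|t_{j,k}\|_{L^p}$ by $\|M_F(|\psi_{j,k}*f|^r)\|^{1/r}_{L^{p/r}}$ for a single scale. It should therefore be genuinely local in $(j,k)$ and not mix scales, because $\psi_{j,k}*f$ is sampled on its own grid. If it does mix neighbouring scales through almost-orthogonality, then the neighbours of $|J|_1>K$ have $|J|_1>K-2$, which is harmless. Hence $\|t^{>K}\|\le C\omega_{f,\infty}(K-2)$.

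The head $t^{\le K}$ involves finitely many scales $(j,k)$. Each $\ell^p$ block is only a sequence in $\ell^p$, so we truncate further to finitely many $R$. The remaining spatial tail has arbitrarily small $\ell^p$ norm at finitely many scales, and $T_\psi$ maps it to something small. The truncated head is a finite sum of translates $\psi_{j,k}(\cdot-z_R)\in\mathcal S_F$, hence lies in $\mathcal S_F$. This gives $\operatorname{dist}(f,\mathcal S_F)\le C\omega_{f,\infty}(K-2)+\varepsilon$. Letting $K\to\infty$ and then $\varepsilon\to0$ proves the right inequality with a constant $C$ depending only on the bounds in Theorem~\ref{thm:retraction}.

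\emph{Independence of the admissible pair.} By Theorem~\ref{thm:retraction}, two admissible pairs give equivalent norms, so the distance to $\mathcal S_F$ is the same up to constants. Applying the two-sided estimate for each pair then shows $d_\psi(f)\asymp d_\varphi(f)$. In particular condition \eqref{eq:little} holds for one pair if and only if it holds for the other.
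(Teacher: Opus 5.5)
Your proposal is correct and follows essentially the paper's own proof. The lower bound is the same scale-by-scale $\min(1,p)$-triangle inequality. The upper bound is exactly the finite Calder\'on sum $f_{K,L}$ of Proposition~\ref{prop:quantitative-approximation}: a scale cutoff, then a spatial cutoff, then bounded synthesis. Independence of the pair comes, as in the paper, from invariance of the closure (equivalently, of the distance up to constants) under equivalent quasi-norms. Your hedge about mixing of scales is unnecessary: the sampling bound of Lemma~\ref{lem:single-scale} is strictly single-scale, so it gives $\|t^{>K}\|_{\dot b^{s,\infty}_{p,F}}\le C\,\omega_{f,\infty}(K)$ directly.
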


For an exponent $r\in(0,\infty]$ put
\begin{equation}\label{eq:star}
 r^*=\begin{cases}
       \infty,&0<r\leq1,\\
       r/(r-1),&1<r<\infty,\\
       1,&r=\infty.
      \end{cases}
\end{equation}
The value $\infty$ for $r\leq1$ describes the dual of a sequence
space.

\begin{definition}\label{def:associate}
For a coefficient sequence $d=\{d_{J,R}\}$ define the block
dual size
\begin{equation}\label{eq:block-dual}
 D_J(d)=
 \begin{cases}
 \displaystyle\left(\sum_{R\in\mathcal R_J^N}
       |R||d_{J,R}|^{p'}\right)^{1/p'},
                       &1<p<\infty,\quad p'=p/(p-1),\\[4pt]
 \displaystyle\sup_{R\in\mathcal R_J^N}
                  |R|^{1-1/p}|d_{J,R}|,&0<p\leq1.
 \end{cases}
\end{equation}
The associate coefficient norm is
\begin{equation}\label{eq:exact-dual}
 \|d\|_{(\dot b^{s,q}_{p,F})^\times}
       =\bigl\|\{2^{js_1}2^{(j+k)s_2}D_J(d)\}_J\bigr\|_{\ell^{q^*}},
\end{equation}
and the coefficient pairing is
\begin{equation}\label{eq:sequence-pairing}
 [t,d]_\psi=\sum_J\sum_{R\in\mathcal R_J^N}
                         |R|t_{J,R}\overline{d_{J,R}}.
\end{equation}
The corresponding distribution space is
\begin{equation}\label{eq:distribution-dual}
 (\dot B^{s,q}_{p,F})^\times
   =\{g\in\mathcal S_F':S_\psi g\in
                            (\dot b^{s,q}_{p,F})^\times\},
 \qquad
 \|g\|_{(\dot B^{s,q}_{p,F})^\times}
      =\|S_\psi g\|_{(\dot b^{s,q}_{p,F})^\times}.
\end{equation}
\end{definition}

\begin{theorem}[Continuous duals]\label{thm:duality-all}
If $0<p,q<\infty$, then
\begin{equation}\label{eq:full-finite-dual}
 (\dot B^{s,q}_{p,F})^*
                   =(\dot B^{s,q}_{p,F})^\times
\end{equation}
under the pairing
\begin{equation}\label{eq:duality-pairing}
 \langle f,g\rangle_F=[S_\psi f,S_\psi g]_\psi,
\end{equation}
with equivalent norms and unique representatives in
$\mathcal S_F'$. At $q=\infty$,
\begin{equation}\label{eq:little-dual}
 B_0^*=(\dot B^{s,\infty}_{p,F})^\times,
\end{equation}
with the outer sequence norm in the dual of the little space
equal to $\ell^1$.
\end{theorem}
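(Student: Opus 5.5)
The plan is to reduce everything to the coefficient level through Theorem~\ref{thm:retraction} and then transfer back. There are two steps: compute the dual of the sequence space, and identify the transposed coefficient projection with a distribution.

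\textbf{Step 1 (sequence duality).} I will show that $(\dot b^{s,q}_{p,F})^*$ for $q<\infty$, and the dual of the closed subspace $b_0\subset\dot b^{s,\infty}_{p,F}$ of finitely-supported-limit sequences for $q=\infty$, is $(\dot b^{s,q}_{p,F})^\times$ under $[\cdot,\cdot]_\psi$. The norm is a mixed norm, $\ell^q$ over $J$ of weighted $\ell^p(|R|)$ blocks.

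\emph{Blocks with $p>1$.} Each block dual is $L^{p'}$, which gives $D_J$.

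\emph{Blocks with $p\le1$.} Here I use $\ell^p\hookrightarrow\ell^1$ on a single block. Writing $|R|t_R\bar d_R=(|R|^{1/p}t_R)(|R|^{1-1/p}\bar d_R)$ gives
\[
\Bigl|\sum_R|R|t_R\bar d_R\Bigr|\le\Bigl(\sum_R|R||t_R|^p\Bigr)^{1/p}\sup_R|R|^{1-1/p}|d_R|.
\]
Testing on single atoms gives the converse. Note that $|R|^{1-1/p}$ is not constant across scales by \eqref{eq:Rvolume}; this is the source of the smoothness shift, but at the sequence level it is simply absorbed into $D_J$.

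\emph{Outer exponent.} The outer $\ell^q$ dual is $\ell^{q^*}$, with $q^*=\infty$ for $q\le1$, by the same atom-testing argument. For $q=\infty$ the outer space is $c_0$, whose dual is $\ell^1$. Any functional is determined by its values on the unit atoms $e_{J,R}$, which are dense in the relevant space ($p,q<\infty$, or $b_0$). Defining $d_{J,R}=\overline{\Lambda(e_{J,R})}/|R|$, I bound $\|d\|_\times\lesssim\|\Lambda\|$ by testing $\Lambda$ on finite sections with extremal signs and magnitudes, then let the sections exhaust.

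\textbf{Step 2 (transfer).} Given $\Lambda\in(\dot B^{s,q}_{p,F})^*$, the functional $\Lambda\circ T_\psi$ is continuous on $\dot b$ (or on $b_0$ when $q=\infty$, since $T_\psi$ maps finite sequences into $\mathcal S_F$). Step 1 produces $d$ with $\Lambda(T_\psi t)=[t,d]_\psi$, and then $\Lambda(f)=[S_\psi f,d]_\psi$ because $T_\psi S_\psi=I$.

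I then set $g=T_\psi d$, understood as the distribution $\phi\mapsto\overline{[S_\psi\phi,d]_\psi}$ on $\mathcal S_F$. This is well defined because $S_\psi\phi$ lies in every $\dot b$, in particular in the predual of the $\times$-space. The key identity needed is
\[
S_\psi T_\psi S_\psi=S_\psi,
\]
and its transpose shows that $d$ may be replaced by $P d$, where $P=S_\psi T_\psi$ is a bounded projection on $\dot b$. I therefore need $P$ bounded on $(\dot b^{s,q}_{p,F})^\times$, and this is what I expect to be \emph{the main obstacle}. It requires almost-diagonal estimates for the matrix $\langle\psi_{J}(\cdot-z_R),\psi_{J'}(\cdot-z_{R'})\rangle|R|$. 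The matrix vanishes unless $|j-j'|,|k-k'|\le1$ by Fourier support, and it has rapid spatial decay. I will control it by scalar maximal estimates with the flag maximal function, treating the $p\le1$ sup-block via pointwise decay and the $p>1$ case via $M_F$ on $L^{p'}$. Once $P$ is bounded on the dual coefficient space, $S_\psi g=Pd$ lies in $(\dot b)^\times$ with norm $\lesssim\|\Lambda\|$, and $\Lambda(f)=[S_\psi f,S_\psi g]_\psi$.

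\textbf{Step 3 (converse and uniqueness).} Conversely, every $g$ with $S_\psi g\in(\dot b)^\times$ defines a bounded functional by Step 1 and Theorem~\ref{thm:retraction}, which gives the norm equivalence. For uniqueness, suppose $[S_\psi f,S_\psi g]_\psi=0$ for all $f\in\mathcal S_F$. Taking $f=\psi_J(\cdot-z_R)$-type test functions and using the reproducing identity $T_\psi S_\psi=I$ in $\mathcal S_F'$ gives $g=0$.

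At $q=\infty$ I run the same argument with $B_0$ in place of the full space. By Theorem~\ref{thm:little}, $S_\psi$ maps $B_0$ into $b_0$ and $T_\psi$ maps $b_0$ into $B_0$, since finite truncations lie in the closure and tails vanish. The outer dual norm is therefore $\ell^1=\ell^{\infty^*}$.
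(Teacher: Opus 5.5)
Your proposal is correct. It departs from the paper only at the step you single out as the main obstacle, where it takes a heavier route.

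You propose to prove boundedness of $P_\psi=S_\psi T_\psi$ on $(\dot b^{s,q}_{p,F})^\times$ by new almost-diagonal estimates, and this does work. For $p>1$ the associate space is literally $\dot b^{-s,q^*}_{p',F}$, so Lemma~\ref{lem:matrix} applies with $p'$ in place of $p$. For $p\leq1$, the finite band in $(j,k)$ together with $\sum_R|R|P_{A,B,M}(z-z_R)\leq C$ controls the weighted supremum blocks.

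The paper needs no further kernel estimates. Your own Step~1 already shows that the associate norm is the supremum of $|[t,d]_\psi|$ over finitely supported $t$ in the unit ball of $\dot b^{s,q}_{p,F}$. The symmetry of the real even kernels gives $[t,P_\psi d]_\psi=[P_\psi t,d]_\psi$ for finite sequences. Hence $\|P_\psi d\|_\times\leq\|P_\psi\|_{\dot b\to\dot b}\|d\|_\times$ is inherited from the primal bound in Theorem~\ref{thm:retraction}, with Fatou's lemma passing from finite to general $d$. The same transposition, applied to the cross matrices $S_\varphi T_\psi$, also gives independence of the admissible pair and grid for free.

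Your route does buy two things:
\begin{itemize}
\item It gives an absolute-value bound valid for arbitrary $d$ at once. This also justifies the identity $[S_\psi f,d]_\psi=[P_\psi S_\psi f,d]_\psi=[S_\psi f,P_\psi d]_\psi$ for infinite sequences. You should state this explicitly; the paper handles it through an absolute cross-matrix bound.
\item Defining $g$ by $\phi\mapsto\overline{[S_\psi\phi,d]_\psi}$ avoids the coefficient growth estimate the paper uses to make the synthesis series $T_\psi d$ converge.
\end{itemize}
The paper's version is shorter, and it is exactly the single ``transpose of one bounded coefficient projection'' mechanism the paper advertises.
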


\begin{theorem}[Regular and singular parts]\label{thm:decomposition}
Write $B=\dot B^{s,\infty}_{p,F}$. For $g\in B^\times$ let
$\ell_g(f)=\langle f,g\rangle_F$ be its canonical functional on
the full space. Every $\ell\in B^*$ has a unique decomposition
\begin{equation}\label{eq:regular-singular}
 \ell=\ell_g+\ell_0,\qquad
 g\in B^\times,\quad
 \ell_0\in B_0^\perp:=\{L\in B^*:L|_{B_0}=0\},
\end{equation}
and
\begin{equation}\label{eq:decomp-norm}
 \|\ell\|\asymp\|g\|_{B^\times}+\|\ell_0\|.
\end{equation}
The decomposition and its regular part are independent of the
admissible pair and the fixed grid. In general $B_0^\perp$ is
nonzero.
\end{theorem}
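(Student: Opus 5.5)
The plan is to restrict $\ell$ to the little space, represent that restriction by Theorem~\ref{thm:duality-all}, and then show that the canonical functional of the representative extends boundedly to the whole of $B$.

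\textbf{Step 1: the regular part.} Given $\ell\in B^*$, the restriction $\ell|_{B_0}$ is a continuous functional on $B_0$. By \eqref{eq:little-dual} there is a unique $g\in\mathcal S_F'$ with $S_\psi g\in(\dot b^{s,\infty}_{p,F})^\times$ such that $\ell(f)=\langle f,g\rangle_F$ for $f\in B_0$. The norms satisfy $\|g\|_{B^\times}\lesssim\|\ell|_{B_0}\|\le\|\ell\|$.

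\textbf{Step 2: $\ell_g$ is bounded on all of $B$.} Here the outer norm of $B^\times$ is $\ell^1$. For $f\in B$, I bound the pairing $[S_\psi f,S_\psi g]_\psi$ block by block using H\"older's inequality:
\[
 \Bigl|\sum_{R}|R|t_{J,R}\overline{d_{J,R}}\Bigr|\le \|t_J\|_{L^p}D_J(d)\quad (p>1).
\]
For $p\le1$ I use instead the embedding $\ell^p\subset\ell^1$ weighted by $|R|^{1/p}$, which gives the factor $|R|^{1-1/p}$ appearing in $D_J$. Summing over $J$ with the weights $2^{\mp js_1}2^{\mp(j+k)s_2}$ gives $|\ell_g(f)|\le\|S_\psi f\|_{\dot b^{s,\infty}_{p,F}}\|g\|_{B^\times}$. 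This series converges absolutely, and Theorem~\ref{thm:retraction} then gives $\|\ell_g\|\lesssim\|g\|_{B^\times}$.

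Set $\ell_0=\ell-\ell_g$. Then $\ell_0$ vanishes on $B_0$ by Step 1, and $\|\ell_0\|\le\|\ell\|+C\|g\|_{B^\times}\lesssim\|\ell\|$. The reverse inequality in \eqref{eq:decomp-norm} is just the triangle inequality.

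\textbf{Uniqueness.} Suppose $\ell_g+\ell_0=0$. Restricting to $B_0$ gives $\ell_g|_{B_0}=0$. By the uniqueness of representatives in Theorem~\ref{thm:duality-all}, $g=0$, and hence $\ell_0=0$. A cleaner route is to test against $\mathcal S_F\subset B_0$: $\langle h,g\rangle_F$ for $h\in\mathcal S_F$ coincides with the distributional action $\overline{g(h)}$ once one uses $T_\psi S_\psi=I$.

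\textbf{Independence of the pair and grid.} This is where I expect the main care to be needed. I would show that $\langle f,g\rangle_F$ does not depend on the admissible pair or on $N$ for $f\in B$ and $g\in B^\times$.
\begin{itemize}
 \item For $f\in\mathcal S_F$, the pairing equals the distributional pairing $\overline{g(f)}$, by the exact reproducing formula $f=T_\psi S_\psi f$ converging in $\mathcal S_F$.
 \item For general $f\in B$, pass to a limit via the tail estimate. Approximate $f$ by Calder\'on truncations $f_K=T_\psi(S_\psi f\,\mathbf 1_{|J|_1\le K})$ and use the fact that the pairing is absolutely convergent in $J$ and that $\|g\|_{B^\times}$ has $\ell^1$ outer norm. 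Then $\langle f,g\rangle$ is the limit of $\langle f_K,g\rangle$, dominated by tails of $g$'s $\ell^1$ sum.
 \item The truncations do depend on the pair, however. So I would instead write the pairing for a second pair $\varphi$ through the almost-diagonal cross-matrix $S_\varphi T_\psi$. Boundedness of the transposed coefficient projection on the associate space, which the abstract mentions, then lets the double sum be rearranged, so that the $\psi$-pairing and the $\varphi$-pairing coincide.
\end{itemize}
Once the regular part is independent of the pair and grid, so is $\ell_0=\ell-\ell_g$.

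\textbf{Nontriviality of $B_0^\perp$.} Pick $f\in B$ with $d(f)>0$, for instance a sum of well-separated normalized bumps $2^{js_1}2^{(j+k)s_2}\psi_{j,k}(\cdot-c_j)$ along $k=0$, $j\to\infty$. Theorem~\ref{thm:little} gives $\operatorname{dist}(f,B_0)\ge d(f)>0$. By Hahn--Banach in the Banach case, there is a nonzero functional vanishing on $B_0$. For $p<1$ Hahn--Banach is unavailable, so I would build the functional directly as a Banach limit: apply a Banach limit along $j\to\infty$ to the normalized coefficients $2^{-js_1}2^{-(j+k)s_2}|R|\,(\psi_{j,k}*f)(z_R)$ at the chosen bump locations. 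This is bounded on $B$ because each coefficient is controlled by the block $L^p$ norm times $|R|^{1-1/p}$. It kills $B_0$ because of \eqref{eq:little}, and it is nonzero on the example.
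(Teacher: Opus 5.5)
Your proof of existence, the norm equivalence \eqref{eq:decomp-norm}, uniqueness and frame independence is the paper's argument. You restrict $\ell$ to $B_0$, represent it through \eqref{eq:little-dual}, and extend via the canonical pairing of Proposition~\ref{prop:associate-retraction}. One correction in the independence step concerns the justification. At $q=\infty$ finite sequences are not dense in $\dot b^{s,\infty}_{p,F}$. So transposing $S_\varphi T_\psi$ to $S_\psi T_\varphi$ against an arbitrary primal $t$ needs absolute convergence of the positive cross matrix, which is \eqref{eq:cross-abs}. The associate-space bound on the transposed projection does not give this, because it is obtained only by testing against finite sequences. You can simply cite Proposition~\ref{prop:associate-retraction}, where this is done.

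The nontriviality of $B_0^\perp$ is where you depart from the paper, and there your example and your functional both carry the normalisation of $p=1$.
\begin{itemize}
\item \emph{The example.} Along $k=0$, $j\to\infty$, one has $|R|\asymp2^{-jn}$ and $\|\psi_{j,0}*\psi_{j,0}\|_{L^p}\asymp2^{jn(1-1/p)}$. So the weighted block norms of your $f$ behave like $2^{jn(1-1/p)}$. For $p<1$ the function is not in $B$; for $p>1$ it lies in $B_0$.
\item \emph{The functional.} The coefficients $2^{-js_1}2^{-(j+k)s_2}|R|(\psi_{j,k}*f)(z_R)$ are controlled only by $|R|^{1-1/p}\|f\|_B\asymp2^{jn(1/p-1)}\|f\|_B$, which is unbounded when $p<1$. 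On the correctly normalised example below they do tend to infinity, so your Banach limit is not defined on $B$.
\end{itemize}

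The repair has two parts. First, use $c_j(f)=2^{-j(s_1+s_2)}|R_j|^{1/p}(\psi_{j,0}*f)(z_{R_j})$. It satisfies $|c_j(f)|\le C\|f\|_B$, and $c_j(f)\to0$ for $f\in B_0$ by \eqref{eq:little} and Lemma~\ref{lem:single-scale}. Second, multiply your bumps by $2^{jn(1/p-1)}$, take sparse $j$, and centre every bump at the origin, which is a lattice point at every scale. Then $c_j(f)\asymp1$.

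With these corrections your Banach-limit functional is a nonzero element of $B_0^\perp$ for every $0<p<\infty$ and every $s$. That is more than the paper proves. The paper exhibits one Banach case: $p=2$, $s=0$, using $L^2$-normalised Fourier-side dilates of a single bump along the diagonal $J_0+(L\nu,L\nu)$. It then applies Hahn--Banach on $B/B_0$, which is unavailable for $p<1$. Since the theorem only asserts nontriviality in general, your Hahn--Banach argument at $p=1$ already suffices as written: there $B$ is Banach and your normalisations happen to be correct.
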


\begin{definition}\label{def:Binfty}
For $t\in\mathbb R^2$ and $0<a\leq\infty$ the spatial supremum
flag Besov space is defined by
\begin{equation}\label{eq:spatial-supremum-Besov}
 \|g\|_{\dot B^{t,a}_{\infty,F}}
   =\left\|\left\{2^{-jt_1}2^{-(j+k)t_2}\|\psi_J*g\|_{L^\infty}\right\}_J
                                                    \right\|_{\ell^a}.
\end{equation}
\end{definition}

\begin{corollary}\label{cor:duality}
If $1<p<\infty$ and $0<q<\infty$, then
\begin{equation}\label{eq:reciprocal-dual}
          (\dot B^{s,q}_{p,F})^*=\dot B^{-s,q^*}_{p',F},
\end{equation}
with equivalent norms and the pairing
\eqref{eq:duality-pairing}; in particular, for $1<p,q<\infty$
the dual is $\dot B^{-s,q'}_{p',F}$. At the supremum endpoint,
\begin{equation}\label{eq:little-dual-p}
 B_0^*=\dot B^{-s,1}_{p',F}.
\end{equation}
\end{corollary}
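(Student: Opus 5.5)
The plan is to read the corollary off Theorem~\ref{thm:duality-all}: when $1<p<\infty$, show that the distribution space $(\dot B^{s,q}_{p,F})^\times$ coincides with $\dot B^{-s,q^*}_{p',F}$ with equivalent norms. For $1<p<\infty$ the block dual size is $D_J(d)=\|d_J\|_{L^{p'}}$, where $d_J=\sum_R d_{J,R}\mathbf 1_R$. Hence, directly from \eqref{eq:exact-dual},
\[
 \|d\|_{(\dot b^{s,q}_{p,F})^\times}
 =\bigl\|\{2^{js_1}2^{(j+k)s_2}\|d_J\|_{L^{p'}}\}_J\bigr\|_{\ell^{q^*}}
 =\|d\|_{\dot b^{-s,q^*}_{p',F}}.
\]
Here $q^*$ is the exponent from \eqref{eq:star}; it equals $q'$ for $1<q<\infty$ and $\infty$ for $q\le1$. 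So the coefficient associate space is literally $\dot b^{-s,q^*}_{p',F}$ on the same grid $\mathcal R^N_J$.

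Next, apply Theorem~\ref{thm:retraction} with the data $(-s,p',q^*)$, which is admissible since $1<p'<\infty$ and $0<q^*\le\infty$. The common $N$ convention lets us use one grid $N$ valid for both exponents $p$ and $p'$. By \eqref{eq:discrete-norm}, for $g\in\mathcal S_F'$,
\[
 \|g\|_{\dot B^{-s,q^*}_{p',F}}\asymp\|S_\psi g\|_{\dot b^{-s,q^*}_{p',F}}=\|g\|_{(\dot B^{s,q}_{p,F})^\times},
\]
and one side is finite exactly when the other is. By \eqref{eq:distribution-dual} the two distribution spaces therefore coincide with equivalent norms. Combined with \eqref{eq:full-finite-dual} and the pairing \eqref{eq:duality-pairing}, this gives \eqref{eq:reciprocal-dual}. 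The special case $1<q<\infty$ follows since then $q^*=q'$.

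For the endpoint, Theorem~\ref{thm:duality-all} gives $B_0^*=(\dot B^{s,\infty}_{p,F})^\times$, with outer norm $\ell^1$ (indeed $\infty^*=1$). The same identification then yields $\dot B^{-s,1}_{p',F}$.

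The one point to check is that Theorem~\ref{thm:retraction}, specifically the equivalence $\|g\|\asymp\|S_\psi g\|$, holds for every $g\in\mathcal S_F'$ without assuming a priori that $g$ lies in the space. This is the Plancherel--P\'olya direction, which the paper derives from the sampling estimates: the estimate $\|\psi_J*g\|_{L^{p'}}\lesssim$ (discrete maximal sampling) is pointwise in $J$, so the implication from finite discrete norm to membership should hold. If that were only stated for elements of the space, I would instead use $g=T_\psi S_\psi g$ in $\mathcal S_F'$ together with boundedness of $T_\psi$ on $\dot b^{-s,q^*}_{p',F}$. Beyond this point the corollary involves no real obstacle; it is a bookkeeping identification of norms.
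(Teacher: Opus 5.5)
Your proposal is correct and follows the paper's own argument: for $p>1$ the block dual size makes the associate coefficient norm literally that of $\dot b^{-s,q^*}_{p',F}$, Theorem~\ref{thm:retraction} with data $(-s,p',q^*)$ on the common grid identifies the distribution spaces, and Theorem~\ref{thm:duality-all} gives both \eqref{eq:reciprocal-dual} and the endpoint \eqref{eq:little-dual-p}. The a priori membership issue you raise is settled by your fallback: since $T_\psi S_\psi=I$ holds on all of $\mathcal S_F'$, $S_\psi g\in\dot b^{-s,q^*}_{p',F}$ forces $g\in\dot B^{-s,q^*}_{p',F}$, and this is exactly the ``discrete characterisation'' the paper invokes.
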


\begin{corollary}\label{cor:small-p-duality}
Let $0<p\leq1$, put $\delta=1/p-1$ and
\begin{equation}\label{eq:small-p-dual-smoothness}
       t=(-s_1+(m-n)\delta,\,-s_2-m\delta).
\end{equation}
For $0<q<\infty$,
\begin{equation}\label{eq:small-p-duality}
       (\dot B^{s,q}_{p,F})^*=\dot B^{t,q^*}_{\infty,F},
\end{equation}
and $B_0^*=\dot B^{t,1}_{\infty,F}$. For $p=1$ one has $t=-s$.
\end{corollary}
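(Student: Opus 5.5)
The plan is to deduce the corollary from Theorem~\ref{thm:duality-all} by showing that, for $0<p\leq1$, the associate norm $\|g\|_{(\dot B^{s,q}_{p,F})^\times}$ is equivalent to $\|g\|_{\dot B^{t,q^*}_{\infty,F}}$ on $\mathcal S_F'$. The case $q=\infty$, i.e.\ the statement $B_0^*=\dot B^{t,1}_{\infty,F}$, then follows in the same way from \eqref{eq:little-dual}, because the outer exponent there is $\ell^1=\ell^{\infty^*}$. By \eqref{eq:block-dual}, for $p\leq1$ we have $D_J(S_\psi g)=\sup_{R\in\mathcal R_J^N}|R|^{1-1/p}|(\psi_J*g)(z_R)|=|R_J|^{-\delta}\sup_R|(\psi_J*g)(z_R)|$. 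Here $|R_J|$ denotes the common volume of the rectangles at scale $J$; by \eqref{eq:Rvolume} it satisfies $|R_J|\asymp_N2^{-jn}2^{-km}$ whenever $\psi_J\neq0$.

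First I will carry out the exponent bookkeeping. The weight attached to scale $J$ is
\[
2^{js_1}2^{(j+k)s_2}|R_J|^{-\delta}\asymp 2^{j(s_1+s_2+n\delta)}2^{k(s_2+m\delta)}.
\]
I will match this with $2^{-jt_1}2^{-(j+k)t_2}=2^{-j(t_1+t_2)}2^{-kt_2}$. This forces $t_2=-s_2-m\delta$ and $t_1+t_2=-s_1-s_2-n\delta$, hence $t_1=-s_1+(m-n)\delta$, which is exactly \eqref{eq:small-p-dual-smoothness}. For $p=1$ we have $\delta=0$, so $t=-s$. Only pairs $J$ with $k\leq j+1$ contribute, so the asymptotic form of \eqref{eq:Rvolume} is legitimate there. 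The remaining pairs have $\psi_J=0$ and contribute nothing to either norm.

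The substantive step is a sampling comparison:
\[
\sup_{R\in\mathcal R_J^N}|(\psi_J*g)(z_R)|\asymp\|\psi_J*g\|_{L^\infty},
\]
uniformly in $J$, for every $g\in\mathcal S_F'$ for which either side is finite in the relevant weighted sense. The inequality $\leq$ is immediate. For $\geq$ I will use the $p=\infty$ analogue of the Plancherel--P\'olya sampling estimates from Section~\ref{sec:discrete}, which are the same ones underlying Theorem~\ref{thm:retraction}. The function $\psi_J*g$ has Fourier support in a box of dimensions $\asymp2^j$ in $\xi$ and $\asymp2^k$ in $\eta$. Because the grid side lengths carry the factor $2^{-N}$ with $N$ large, the oscillation of $\psi_J*g$ over a grid rectangle is bounded by a small multiple of a maximal-type supremum of $\psi_J*g$ itself. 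This allows the absorption $\|\psi_J*g\|_\infty\leq C\sup_R|\psi_J*g(z_R)|$.

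Two details need care here. One is the a priori finiteness of $\|\psi_J*g\|_\infty$ needed for the absorption, since a tempered distribution only gives polynomial growth. I expect this to be the main obstacle. I will handle it either with a weighted supremum $\sup_z|\psi_J*g(z)|(1+2^j|x|+2^{\min(j,k)}|y|)^{-L}$ for large $L$, or alternatively with the reproducing formula $\psi_J*g=\sum_{|J'-J|\leq2}\psi_J*\widetilde\psi_{J'}*g$, expanded through the sampling formula of Theorem~\ref{thm:retraction}. Either route gives a Peetre-type bound from the samples that is uniform in $J$. The other detail is that at $p=\infty$ no maximal function is needed: the supremum is taken pointwise.

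With the comparison in hand, $\|S_\psi g\|_{(\dot b^{s,q}_{p,F})^\times}\asymp\|g\|_{\dot B^{t,q^*}_{\infty,F}}$. Both spaces are defined as subsets of $\mathcal S_F'$, so Theorem~\ref{thm:duality-all} gives \eqref{eq:small-p-duality} with equivalent norms and the pairing \eqref{eq:duality-pairing}, and the endpoint identification of $B_0^*$ follows from \eqref{eq:little-dual}.
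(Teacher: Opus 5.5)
Your proposal is correct and is essentially the paper's proof: the same exponent matching from $|R|\asymp_N2^{-jn}2^{-km}$, the $L^\infty$ sampling comparison (which the paper already records as Lemma~\ref{lem:Linfty-sampling}), and then Theorem~\ref{thm:duality-all} together with \eqref{eq:little-dual} for $B_0^*$. For the a priori finiteness the paper uses the regularisation $h_\epsilon=h\,\vartheta(\epsilon x/a,\epsilon y/b)$ of Lemma~\ref{lem:peetre}, and of your two options only the reproducing-formula one is safe: with a natural-scale weight $(1+\rho_{a,b}(z))^{-L}$ and $L$ large enough for the growth of $g$, the absorption constant grows with $L$ and hence depends on $g$, while $N$ is fixed.
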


\begin{theorem}[Real interpolation]\label{thm:real-interpolation}
Fix $s\in\mathbb R^2$ and $0<p<\infty$. Let
$0<q_0<q_1\leq\infty$, $0<\theta<1$, and
\begin{equation}\label{eq:real-q}
             \frac1q=\frac{1-\theta}{q_0}+\frac\theta{q_1}.
\end{equation}
Then
\begin{equation}\label{eq:real-interpolation}
 \bigl(\dot B^{s,q_0}_{p,F},
       \dot B^{s,q_1}_{p,F}\bigr)_{\theta,q}
             =\dot B^{s,q}_{p,F}
\end{equation}
with equivalent quasi-norms. The fine index is the displayed $q$;
the smoothness and spatial exponent are fixed.
\end{theorem}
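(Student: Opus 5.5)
The plan is to transfer the problem to coefficient sequences via the retraction of Theorem~\ref{thm:retraction}, and then reduce to scalar sequence spaces. By the retraction principle for real interpolation, since $S_\psi$ and $T_\psi$ are bounded on every space $\dot B^{s,q_i}_{p,F}\to\dot b^{s,q_i}_{p,F}$ and back, with $T_\psi S_\psi=I$ and the same fixed grid $N$ for both endpoints, it suffices to show
\[
 \bigl(\dot b^{s,q_0}_{p,F},\dot b^{s,q_1}_{p,F}\bigr)_{\theta,q}=\dot b^{s,q}_{p,F}.
\]
Indeed $S_\psi$ maps the $B$-couple into the $b$-couple, so it is bounded on the interpolation spaces. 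Likewise $T_\psi$ maps back. Also $f=T_\psi S_\psi f$. Together these give two-sided norm control. All of this only uses the interpolation property of the quasi-Banach $K$-method, which holds for arbitrary quasi-normed couples \cite{BL76,BS88}.

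For the sequence level, I would use the block structure. Put $A_J(t)=2^{-js_1}2^{-(j+k)s_2}\|t_J\|_{L^p}$, so that $\|t\|_{\dot b^{s,q_i}_{p,F}}=\|\{A_J(t)\}\|_{\ell^{q_i}}$. The key claim is that the $K$-functional of $t$ for the coefficient couple is equivalent to the scalar $K$-functional $K(\tau,\{A_J(t)\};\ell^{q_0},\ell^{q_1})$, with constants depending only on $p,q_0,q_1$.

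\emph{Lower bound.} Given any $t=t^0+t^1$, quasi-subadditivity of $\|\cdot\|_{L^p}$ on each block gives $A_J(t)\le c_p\bigl(A_J(t^0)+A_J(t^1)\bigr)$. This produces a scalar decomposition dominating $A_J(t)$. Since $\ell^{q}$ quasi-norms are lattice norms, splitting $A_J(t)=a^0_J+a^1_J$ with $a^i_J\le c_pA_J(t^i)$ (proportionally) is enough.

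\emph{Upper bound.} Given an almost optimal scalar decomposition $A_J(t)=a^0_J+a^1_J$ with $a^i_J\ge0$, I would define $t^i_{J,R}=t_{J,R}\,a^i_J/A_J(t)$, and set both to zero when $A_J(t)=0$. Then $t=t^0+t^1$ exactly and $A_J(t^i)=a^i_J$. Hence the vector and scalar $K$-functionals agree up to constants.

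It follows that $t\in(\dot b^{s,q_0},\dot b^{s,q_1})_{\theta,q}$ if and only if $\{A_J(t)\}\in(\ell^{q_0},\ell^{q_1})_{\theta,q}$. I would then invoke the classical identity $(\ell^{q_0},\ell^{q_1})_{\theta,q}=\ell^{q}$ for $0<q_0<q_1\le\infty$ and $1/q=(1-\theta)/q_0+\theta/q_1$, with equivalent quasi-norms. This identity follows from Holmstedt's formula \cite{Holmstedt70} via nonincreasing rearrangements (the Lorentz space $\ell^{q,q}=\ell^q$), and it is valid for the whole quasi-Banach range, including $q_1=\infty$.

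The main obstacle is the careful handling of the quasi-Banach setting. First, the retraction argument requires that $T_\psi$ be defined on the sum $\dot b^{s,q_0}+\dot b^{s,q_1}=\dot b^{s,q_1}$ compatibly, with convergence in $\mathcal S_F'$. Theorem~\ref{thm:retraction} gives this because the $q_0$ space embeds in the $q_1$ space. Second, one must keep the reduction to the scalar $K$-functional free of constants depending on the scale $J$, which the block-proportional splitting achieves.
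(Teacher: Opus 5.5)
Your proposal is correct and follows the paper's proof essentially step for step. Both use the retraction through $S_\psi$ and $T_\psi$ with $T_\psi S_\psi=I$, the blockwise proportional splitting that gives $K(\lambda,t;E_0,E_1)\asymp K(\lambda,u;\ell^{q_0},\ell^{q_1})$, and Holmstedt's formula to obtain $(\ell^{q_0},\ell^{q_1})_{\theta,q}=\ell^q$. The only point you leave implicit, and which the paper checks explicitly, is that the scalar $K$-functional of a nonnegative sequence may be computed over nonnegative decompositions; this follows from the coordinatewise replacement $a=u|c|/(|c|+|d|)$, $b=u|d|/(|c|+|d|)$.
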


\begin{theorem}[Complex interpolation]\label{thm:complex-interpolation}
Let $1<p_0,p_1,q_0,q_1<\infty$, $s^0,s^1\in\mathbb R^2$ and
$0<\theta<1$. Put
\begin{equation}\label{eq:interpolated-parameters}
 \frac1p=\frac{1-\theta}{p_0}+\frac\theta{p_1},\qquad
 \frac1q=\frac{1-\theta}{q_0}+\frac\theta{q_1},\qquad
 s=(1-\theta)s^0+\theta s^1.
\end{equation}
Then
\begin{equation}\label{eq:complex-interpolation}
 \bigg(\dot B^{s^0,q_0}_{p_0,F},
       \dot B^{s^1,q_1}_{p_1,F}\bigg)_\theta
                       =\dot B^{s,q}_{p,F}
\end{equation}
with equivalent norms.
\end{theorem}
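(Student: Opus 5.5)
The plan is to reduce complex interpolation to the coefficient level via the retraction of Theorem~\ref{thm:retraction}, and then interpolate the weighted mixed-norm sequence spaces directly. Choose one common grid parameter $N$ valid for the exponents $p_0,p_1,p$, and a single admissible pair $\psi$. Then $S_\psi$ and $T_\psi$ are bounded simultaneously as maps $\dot B^{s^i,q_i}_{p_i,F}\to\dot b^{s^i,q_i}_{p_i,F}$ and back, for $i=0,1$ and for the target indices. Moreover, $T_\psi S_\psi=I$ on the sum space. Here one has to check that $S_\psi$ and $T_\psi$ are defined consistently on the sum $\dot B^{s^0,q_0}_{p_0,F}+\dot B^{s^1,q_1}_{p_1,F}$. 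This holds because both spaces embed continuously in $\mathcal S_F'$, and the formulas for $S_\psi$ and $T_\psi$ do not depend on the indices (convergence of $T_\psi t$ holds in $\mathcal S_F'$). By the standard retraction principle \cite{BL76}, complex interpolation commutes with retracts. Hence
\[
\bigl(\dot B^{s^0,q_0}_{p_0,F},\dot B^{s^1,q_1}_{p_1,F}\bigr)_\theta
=T_\psi\Bigl(\bigl(\dot b^{s^0,q_0}_{p_0,F},\dot b^{s^1,q_1}_{p_1,F}\bigr)_\theta\Bigr),
\]
with equivalent norms. It therefore suffices to prove the sequence identity
\[
\bigl(\dot b^{s^0,q_0}_{p_0,F},\dot b^{s^1,q_1}_{p_1,F}\bigr)_\theta=\dot b^{s,q}_{p,F}.
\]
Applying $T_\psi$ and using \eqref{eq:discrete-norm} then gives \eqref{eq:complex-interpolation}.

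For the sequence identity, identify $\dot b^{s,q}_{p,F}$ isometrically with a weighted $\ell^q(L^p)$ space. Map $t\mapsto\{t_{J}\}_J$, where $t_J=\sum_R t_{J,R}\mathbf 1_R$ lies in the closed subspace $E_J\subset L^p(\mathbb R^{n+m})$ of functions constant on the rectangles of $\mathcal R_J^N$. Then $\dot b^{s,q}_{p,F}=\ell^q_{w_s}(\{E_J\})$ with weight $w_s(J)=2^{-js_1}2^{-(j+k)s_2}$. The subspace $E_J$ is the range of the conditional expectation $\mathbb E_J$ onto the $\sigma$-algebra generated by $\mathcal R_J^N$. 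This is a contraction on every $L^p$, $1\le p\le\infty$, and is independent of $p$, so $\{E_J\}$ is a complemented subfamily of $\{L^p\}$ uniformly in both endpoints. The classical theorem for vector-valued weighted sequence spaces \cite[Thm.~5.6.3]{BL76} gives
\[
(\ell^{q_0}_{w_0}(A_0),\ell^{q_1}_{w_1}(A_1))_\theta=\ell^q_{w_0^{1-\theta}w_1^\theta}\bigl((A_0,A_1)_\theta\bigr)
\quad\text{for }q_0,q_1<\infty .
\]
Combining this with $(L^{p_0},L^{p_1})_\theta=L^p$ (Riesz--Thorin/Calderón) and the retraction given by $\mathbb E_J$, which yields $(E_J^{p_0},E_J^{p_1})_\theta=E_J^p$ with constants uniform in $J$, we obtain the result. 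Since $w_{s^0}^{1-\theta}w_{s^1}^\theta=w_s$ exactly because $s$ is the convex combination in \eqref{eq:interpolated-parameters}, this is precisely $\dot b^{s,q}_{p,F}$.

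The main obstacle is bookkeeping rather than a hard estimate. First, one must ensure that the uniformity in $J$ of the retraction constants actually passes through the $\ell^q$ theorem. I would handle this by applying the coordinatewise retraction $\{\mathbb E_J\}_J$ on the full space $\ell^{q}_{w}(L^{p})$ instead, so that only the scalar-$L^p$ version of the Bergh--Löfström result is needed. Second, the compatibility of the couple must be verified: both the sequence spaces and the function spaces embed into a common Hausdorff space ($\mathbb C^{\text{labels}}$, respectively $\mathcal S_F'$), and the sampling grid $N$ must be the same for all three indices. Since the paper fixes one common $N$ for an interpolation couple and its intermediate space, this is granted. Finally, the restriction to $1<p_i,q_i<\infty$ ensures that the spaces are Banach (Theorem~\ref{thm:retraction}), so the Calderón method applies without quasi-Banach modifications.
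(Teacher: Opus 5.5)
Your proof is correct. Its outer step, which transfers a coefficient identity to $\dot B$ through the common maps $S_\psi$, $T_\psi$ with $T_\psi S_\psi=I$ on the sum space, is the same as the paper's closing paragraph. The difference lies in how you prove the coefficient identity $[\dot b^{s^0,q_0}_{p_0,F},\dot b^{s^1,q_1}_{p_1,F}]_\theta=\dot b^{s,q}_{p,F}$.

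The paper proves it by hand. It writes down the Calder\'on analytic family $F_{J,R}(z)$ adapted to the $\ell^q(\ell^p)$ block structure, which gives $\|t\|_{[\cdot,\cdot]_\theta}\le\|t\|_{\dot b^{s,q}_{p,F}}$. For the reverse inequality it builds the same family for a finite dual sequence with parameters $p_i',q_i',-s^i$, then applies the three-lines theorem together with the finite-sequence duality of Lemma~\ref{lem:sequence-duality}.

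You instead realise $\dot b^{s,q}_{p,F}$ isometrically as the range of the $p$-independent contractive projection $\{\mathbb E_J\}_J$ on $\ell^q_{w_s}(L^p)$, and use a second retraction. The only analytic inputs are then $[L^{p_0},L^{p_1}]_\theta=L^p$ and vector-valued weighted $\ell^q$ interpolation. Your route needs no analytic family and no duality argument. It also isolates the structural fact behind the result: $\dot b^{s,q}_{p,F}$ is a complemented subspace of $\ell^q_{w_s}(L^p)$ with a projection independent of $p$, $q$ and $s$. The paper's route is self-contained and reuses the coefficient duality that drives Theorem~\ref{thm:duality-all}.

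Applying the retraction on all of $\ell^q_{w}(L^p)$ rather than block by block is the right way to avoid uniformity issues in $J$. There is one citation point. \cite[Thm.~5.6.3]{BL76} is formulated for one-parameter weights $2^{\nu s}$, while your weights $w_s(J)=2^{-j(s_1+s_2)}2^{-ks_2}$ are indexed by $\mathbb Z^2$. You need the version for arbitrary positive weights on a countable index set, i.e.\ a vector-valued Stein--Weiss statement with measure $w_{s^i}(J)^{q_i}$ on the index set at the $i$th endpoint. Its proof is the same, and the identity $w_{s^0}^{1-\theta}w_{s^1}^{\theta}=w_s$ then produces exactly the target weight.
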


For $\alpha=(\alpha_1,\alpha_2)\in\mathbb R^2$ define
\begin{equation}\label{eq:lifting-symbol}
 m_\alpha(\xi,\eta)=|(\xi,\eta)|^{\alpha_1+\alpha_2}
                              |\eta|^{\alpha_2},
\end{equation}
and let $\mathcal D^\alpha$ be the Fourier multiplier
$\widehat{\mathcal D^\alpha f}=m_\alpha\widehat f$.

\begin{theorem}[Lifting]\label{thm:lifting}
For $s,\alpha\in\mathbb R^2$, $0<p<\infty$ and
$0<q\leq\infty$, $\mathcal D^\alpha$ is an isomorphism
\begin{equation}\label{eq:lifting}
 \mathcal D^\alpha:\dot B^{s,q}_{p,F}
                   \longrightarrow\dot B^{s+\alpha,q}_{p,F},
 \qquad
 \|\mathcal D^\alpha f\|_{\dot B^{s+\alpha,q}_{p,F}}
                           \asymp\|f\|_{\dot B^{s,q}_{p,F}},
\end{equation}
with inverse $\mathcal D^{-\alpha}$; it restricts to an
isomorphism of the corresponding little spaces.
\end{theorem}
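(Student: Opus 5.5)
The plan is to reduce the lifting statement to a uniform estimate for the modified analysing functions. On the Fourier support of $\psi_{j,k}$ we have $|(\xi,\eta)|\asymp2^j$ and $|\eta|\asymp2^k$. So the symbol $m_\alpha$ restricted there looks like $2^{j(\alpha_1+\alpha_2)}2^{k\alpha_2}$ times a smooth bump.

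Concretely, I write
\[
 \psi_{j,k}*\mathcal D^\alpha f
 =2^{j(\alpha_1+\alpha_2)}2^{k\alpha_2}\,\theta^{\alpha}_{j,k}*f,
\]
where
\[
 \widehat{\theta^\alpha_{j,k}}(\xi,\eta)
 =\widehat{\psi^{(1)}}(2^{-j}(\xi,\eta))\,|2^{-j}(\xi,\eta)|^{\alpha_1+\alpha_2}\,
 \widehat{\psi^{(2)}}(2^{-k}\eta)\,|2^{-k}\eta|^{\alpha_2}.
\]
Thus $\theta^\alpha_{j,k}$ has exactly the structure \eqref{eq:psijk}, built from the pair $\psi^{(1)}_\alpha$, $\psi^{(2)}_\alpha$ given by $\widehat{\psi^{(1)}}|\cdot|^{\alpha_1+\alpha_2}$ and $\widehat{\psi^{(2)}}|\cdot|^{\alpha_2}$. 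Both are smooth because the supports avoid the origin, and both keep the annular supports. The weights match:
\[
 2^{-j(s_1+\alpha_1)}2^{-(j+k)(s_2+\alpha_2)}\cdot2^{j(\alpha_1+\alpha_2)}2^{k\alpha_2}=2^{-js_1}2^{-(j+k)s_2}.
\]
Hence $\|\mathcal D^\alpha f\|_{\dot B^{s+\alpha,q}_{p,F}}$ equals the $\dot B^{s,q}_{p,F}$-type expression for $f$ computed with the family $\theta^\alpha_{j,k}$ in place of $\psi_{j,k}$. Before any of this, I check that $\mathcal D^\alpha$ is well defined on $\mathcal S_F'$ by duality. Multiplication by $m_{\bar\alpha}$ maps $\mathcal S_F$ continuously to itself, since by Proposition~\ref{def:test} the Fourier transform is flat on $\eta=0$, which absorbs the singularity of $|\eta|^{\alpha_2}$ and of $|(\xi,\eta)|^{\alpha_1+\alpha_2}$ at the origin. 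The symbol is real, so the conjugation in the pairing causes no issue.

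The key step is the two-sided estimate
\[
 \Bigl\|\bigl\{2^{-js_1}2^{-(j+k)s_2}\|\theta^\alpha_{j,k}*f\|_{L^p}\bigr\}\Bigr\|_{\ell^q}\lesssim\|f\|_{\dot B^{s,q}_{p,F}}.
\]
I will prove it via the reproducing formula of Theorem~\ref{thm:retraction}. Write $f=T_\psi S_\psi f$, so that
\[
 \theta^\alpha_{J}*f=\sum_{|J'-J|_\infty\leq2}\sum_R|R|(\psi_{J'}*f)(z_R)\,\theta^\alpha_J*\psi_{J'}(\cdot-z_R),
\]
which has only finitely many neighbouring scales by the Fourier supports. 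The kernels $\theta^\alpha_J*\psi_{J'}$ obey the same localised decay bounds as $\psi_J$ itself, uniformly in $J$. This is because they are rescalings of fixed Schwartz functions of the same flag type, with the scales differing by at most $2$. The sampling and maximal-function estimates of Section~\ref{sec:discrete}, used to show $T_\psi$ bounded, then give
\[
 \|\theta^\alpha_J*f\|_{L^p}\lesssim\sum_{|J'-J|\leq2}\|(S_\psi f)_{J'}\|_{L^p}.
\]
These are the scalar $M_F$ bounds applied to $r$-th powers with $r<p$, which handles $p\le1$. Summing in $\ell^q$ (finitely many shifts) and applying \eqref{eq:discrete-norm} gives the bound.

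The reverse inequality follows by symmetry: $\mathcal D^{-\alpha}\mathcal D^\alpha f=f$ on $\mathcal S_F'$, because $m_{-\alpha}m_\alpha=1$ away from the null sets where flatness acts. Applying the forward bound to $\mathcal D^{-\alpha}$ with smoothness $s+\alpha$ then gives the lower estimate and shows that $\mathcal D^{-\alpha}$ is the inverse.

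For the little spaces, the blockwise inequality above is local in $J$ up to shifts of size at most $2$. So vanishing of the weighted block norms as $|J|_1\to\infty$ transfers in both directions, and Theorem~\ref{thm:little} identifies the closures. I expect the main obstacle to be the uniform kernel bound for $\theta^\alpha_J*\psi_{J'}$ near the diagonal $k\approx j$. There $b_{j,k}$ switches from $2^{-k}$ to $2^{-j}$, and the factor $|2^{-k}\eta|^{\alpha_2}$ must stay smooth on the support. It does, because $|\eta|\asymp2^k$ there, but the uniformity of the constants must be checked in both regimes $k<j$ and $k\approx j$.
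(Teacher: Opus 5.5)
Your proposal is correct and is essentially the paper's proof. Factoring $2^{j(\alpha_1+\alpha_2)+k\alpha_2}$ out of $\widehat\psi_J m_\alpha$ gives exactly the normalised kernel $H^\alpha_{J,L}$ of Lemma~\ref{lem:lifting-localisation}, and the paper likewise inserts it into the exact reproducing formula, controls it by Lemma~\ref{lem:grid} and Lemma~\ref{lem:matrix}, and gets the lower bound and the inverse from $\mathcal D^{-\alpha}$ at smoothness $s+\alpha$. The only difference is the little-space step: the paper approximates by test functions and uses $\mathcal D^\alpha\mathcal S_F\subset\mathcal S_F$, whereas your scale-local block bound (combined with Lemma~\ref{lem:single-scale} to pass from sampled to continuous block norms) transfers vanishing tails through Theorem~\ref{thm:little}, which is equally valid and even shows that the limiting tails of $f$ and $\mathcal D^\alpha f$ are comparable.
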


The paper is organized as follows. Section~\ref{sec:discrete}
proves Theorem~\ref{thm:retraction}. Section~\ref{sec:tails}
treats the scale tails and proves Theorem~\ref{thm:little}.
Section~\ref{sec:duality} proves Theorems~\ref{thm:duality-all}
and~\ref{thm:decomposition} and Corollaries~\ref{cor:duality}
and~\ref{cor:small-p-duality}. Section~\ref{sec:interpolation}
proves Theorems~\ref{thm:real-interpolation}
and~\ref{thm:complex-interpolation}. Section~\ref{sec:lifting}
proves Theorem~\ref{thm:lifting}. Appendix~\ref{app:mixed}
records the mixed-norm comparisons and the resulting
Hardy-space embeddings.

\section{Discrete analysis and synthesis: proof of Theorem~\ref{thm:retraction}}
\label{sec:discrete}

Throughout the paper $D=n+m$, and $C$ denotes a positive
constant which may change from line to line; its dependence on
the indices of the space and finitely many seminorms of the
functions under consideration is allowed. We use the Fourier
transform
$\widehat f(\xi,\eta)=\int e^{-i(x\cdot\xi+y\cdot\eta)}f(x,y)\,dx\,dy$.

The continuous Calder\'on reproducing formula
\begin{equation}\label{eq:continuousCRF}
 f=\sum_{j,k}\psi_{j,k}*\psi_{j,k}*f
\end{equation}
holds on $\mathcal S_F$ and on $\mathcal S_F'$. Indeed, for $\eta\ne0$,
\begin{align}\label{eq:partition-product-expanded}
 \sum_{j,k}|\widehat\psi_{j,k}(\xi,\eta)|^2
 &=\left(\sum_j|\widehat{\psi^{(1)}}(2^{-j}\xi,2^{-j}\eta)|^2\right)
   \left(\sum_k|\widehat{\psi^{(2)}}(2^{-k}\eta)|^2\right)=1.
\end{align}
The flatness of test functions at $\eta=0$ permits passage to
the test-space limit; transposition gives the distributional
identity. This is the continuous formula of \cite{HL08,DLM10}.

We record the exact sampling argument, following
\cite[Theorem 2.1]{HLW19}. For an annular pair, the Fourier
support of both $\psi_{j,k}$ and $\psi_{j,k}*f$ is contained in
$|\xi|\leq 2a_j^{-1}$, $|\eta|\leq 2b_{j,k}^{-1}$. The product,
as a function of $u$, of $\psi_{j,k}(z-u)$ and
$(\psi_{j,k}*f)(u)$ therefore has Fourier support
$|\xi|\leq 4a_j^{-1}$, $|\eta|\leq 4b_{j,k}^{-1}$, which lies
strictly inside the first reciprocal lattice point
$2\pi\,2^N a_j^{-1}$, $2\pi\,2^N b_{j,k}^{-1}$ for the chosen
$N\geq N_0$. To make the absence of aliasing explicit, write
$\delta=2^{-N}$ and, initially for a test function $f$, put
\[
 F_z(u)=\psi_{j,k}(z-u)(\psi_{j,k}*f)(u),\qquad
 V_{j,k}=(\delta a_j)^n(\delta b_{j,k})^m.
\]
Poisson summation with our Fourier convention reads
\begin{align}\label{eq:poisson-expanded}
 &V_{j,k}\sum_{\ell\in\mathbb Z^n}\sum_{h\in\mathbb Z^m}
       F_z(\delta a_j\ell,\delta b_{j,k}h)\notag\\
 &\quad=\sum_{\ell\in\mathbb Z^n}\sum_{h\in\mathbb Z^m}
       \widehat F_z\left(\frac{2\pi\ell}{\delta a_j},
                         \frac{2\pi h}{\delta b_{j,k}}\right)
       =\widehat F_z(0,0)=\int F_z(u)\,du.
\end{align}
Every term on the second line except $(\ell,h)=(0,0)$ vanishes
by the support restriction. Since $V_{j,k}=|R|$, this gives
\begin{equation}\label{eq:sampling-identity}
 \int\psi_{j,k}(z-u)(\psi_{j,k}*f)(u)\,du
 =\sum_{R\in\mathcal R_{j,k}^N}
   |R|\psi_{j,k}(z-z_R)(\psi_{j,k}*f)(z_R).
\end{equation}
Summing in $j,k$ proves the exact reproducing formula
\begin{equation}\label{eq:exactCRF}
 f=\sum_{j,k}\sum_{R\in\mathcal R_{j,k}^N}
       |R|\,\psi_{j,k}(\,\cdot-z_R)\,
                    (\psi_{j,k}*f)(z_R).
\end{equation}
For $\phi\in\mathcal S_F$, annular localisation and flatness
give, for arbitrary $L,M>0$,
\begin{equation}\label{eq:testcoeff}
 |(\psi_{j,k}*\phi)(z)|
 \leq C_{L,M,\phi}\,2^{-L(|j|+|k|)}(1+|z|)^{-M};
\end{equation}
one differentiates $\widehat\psi_{j,k}\widehat\phi$ a prescribed
number of times, using Taylor's formula at low $\eta$ frequencies
and Schwartz decay at high total frequencies. Together with the
polynomial growth of the translated kernels this proves the
convergence of \eqref{eq:exactCRF} in $\mathcal S_F$ for test
functions and weakly in $\mathcal S_F'$ for distributions.

The sampling identity and Plancherel give, for $f\in L^2$,
\begin{equation}\label{eq:L2isometry}
 \sum_{j,k}\sum_{R\in\mathcal R_{j,k}^N}
      |R|\,|(\psi_{j,k}*f)(z_R)|^2=\|f\|_2^2.
\end{equation}

We shall prove the following Plancherel--P\'olya comparison
for two admissible pairs $\psi,\varphi$ and a sufficiently
fine common grid:
\begin{equation}\label{eq:PP}
 \begin{aligned}
 &\left\|\left\{2^{-js_1}2^{-(j+k)s_2}
   \sum_R\sup_R|\psi_{j,k}*f|\,\mathbf 1_R\right\}\right\|_{\ell^q(L^p)}\\
 &\qquad\asymp
 \left\|\left\{2^{-js_1}2^{-(j+k)s_2}
   \sum_R\inf_R|\varphi_{j,k}*f|\,\mathbf 1_R\right\}\right\|_{\ell^q(L^p)}.
 \end{aligned}
\end{equation}
The annular supports give the kernel estimate
\begin{equation}\label{eq:almost}
 |\psi_{j,k}*\varphi_{j',k'}(z)|
 \leq C_{L,M}2^{-L(|j-j'|+|k-k'|)}P_{A,B,M}(z),
\end{equation}
where $A=\max(a_j,a_{j'})$, $B=\max(b_{j,k},b_{j',k'})$ and
\begin{equation}\label{eq:P}
 P_{A,B,M}(x,y)=
 \frac{A^M}{(A+|x|)^{n+M}}
 \frac{B^M}{(B+|y|)^{m+M}}.
\end{equation}
Indeed, the Fourier transform of the convolution vanishes
unless $|j-j'|+|k-k'|$ is bounded by a fixed constant, since
both its joint-frequency annuli and its $y$-frequency annuli
must intersect. More precisely,
\begin{equation}\label{eq:finite-overlap-expanded}
 \widehat\psi_{j,k}\widehat\varphi_{j',k'}\ne0
 \quad\Longrightarrow\quad
 2^{-2}\leq2^{j-j'}\leq2^2,\qquad
 2^{-2}\leq2^{k-k'}\leq2^2.
\end{equation}
For the remaining indices, rescale $x$ by
$A$ and $y$ by $B$. The resulting Fourier multipliers have
uniformly bounded derivatives of every order and support in
a fixed box: in particular, the ratio $a_j/b_{j,k}$ is at
most one. Integration by parts gives the product decay
\eqref{eq:P}. Since only bounded differences of scales
remain, the factor $2^{-L(|j-j'|+|k-k'|)}$ can be inserted
for any $L>0$ by changing the constant. This proves
\eqref{eq:almost} without restrictions on smoothness.

We next establish the estimates needed for the synthesis map.

\begin{lemma}\label{lem:maximal}
For $1<u<\infty$,
\begin{equation}\label{eq:scalar-maximal}
                    \|M_Fh\|_{L^u}\leq C_u\|h\|_{L^u}.
\end{equation}
\end{lemma}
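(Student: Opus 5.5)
The plan is to dominate $M_F$ pointwise by an iterated composition of one-parameter maximal operators, each bounded on $L^u$. Let $z=(x,y)$ and let $R=I\times J\ni z$ be a flag rectangle with $\ell(I)\le\ell(J)$.

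\textbf{Step 1: pointwise domination.} Set $Q=I'\times J$, where $I'$ is the cube in $\mathbb R^n$ with the same centre as $I$ and side $\ell(J)$. Then $Q$ is a cube in $\mathbb R^{n+m}$ containing $z$. Since $\ell(I)\le\ell(J)$, one has $I\subset I'$, so $R\subset Q$. This alone loses the factor $(\ell(J)/\ell(I))^n$, so direct comparison is insufficient. Instead I would show the standard flag bound
\[
 M_Fh\le C\,M_{\mathrm{str}}h ,
\]
where $M_{\mathrm{str}}$ is the strong maximal function over all axis-parallel products $I\times J$ with no side restriction. This is immediate: every flag rectangle is such a product, so the supremum defining $M_F$ runs over a subfamily, and in fact $C=1$.

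\textbf{Step 2: iteration.} Next I would bound $M_{\mathrm{str}}h(x,y)\le M^{(1)}M^{(2)}h(x,y)$. Here $M^{(2)}$ is the Hardy--Littlewood maximal function in $y\in\mathbb R^m$ with $x$ frozen, and $M^{(1)}$ is the Hardy--Littlewood maximal function in $x\in\mathbb R^n$ with $y$ frozen. This holds by Fubini: the average of $|h|$ over $I\times J$ equals the $I$-average in $x$ of the $J$-average in $y$, and that inner average is at most $M^{(2)}h(\cdot,y)$.

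\textbf{Step 3: boundedness.} The one-parameter operators are bounded on $L^u$ for $1<u<\infty$ uniformly in the frozen variable, so applying Fubini twice gives
\[
 \|M_Fh\|_{L^u}\le \|M^{(1)}M^{(2)}h\|_{L^u}\le C\|M^{(2)}h\|_{L^u}\le C_u\|h\|_{L^u}.
\]

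\textbf{Main obstacle.} The only point needing care is measurability of $M_Fh$. I would handle it by restricting the supremum to rectangles with rational centres and side lengths; this restriction leaves the value unchanged by the usual approximation argument, because averages over open rectangles vary continuously under small perturbations of the rectangle. Since the flag condition is only a restriction on the class of rectangles, no geometry beyond the product structure is needed.
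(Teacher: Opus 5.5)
Your proof is correct, but it takes a different route from the paper. You drop the flag condition $\ell(I)\le\ell(J)$ altogether. $M_F$ is dominated with constant one by the bi-parameter strong maximal function, Fubini bounds that by $M^{(1)}M^{(2)}h$, and the Jessen--Marcinkiewicz--Zygmund iteration gives the $L^u$ estimate.

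The paper instead uses the flag condition to prove the majorant \eqref{eq:flag-domination}, $M_Fh\le C\,M_m(M_{n+m}h)$, where $M_{n+m}$ is the isotropic maximal operator on $\mathbb R^{n+m}$. With $a=\ell(I)\le b=\ell(J)$, it compares the average over $I\times J$ with averages over the genuine cubes $I\times Q(v,a)$, which contain $(x,v)$, and then averages in $v\in 3J$. Since $a\le b$, each $t\in J$ lies in $Q(v,a)$ for a set of $v\in 3J$ of measure exactly $a^m$, so no factor $(b/a)^n$ is lost. This is precisely the device that repairs the loss you noticed when you enlarged $I$ to a cube of side $\ell(J)$.

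Your argument is shorter, and it shows that the flag restriction plays no role in the scalar $L^u$ bound for $1<u<\infty$. That bound is all the lemma asserts and all that is used later, for instance in \eqref{eq:powered-maximal-norm} with $u=p/r>1$.

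The paper's argument keeps the flag geometry in the pointwise majorant, which is built from the maximal operators of the two layers $\mathbb R^{n+m}$ and $\{0\}\times\mathbb R^m$ of the flag. That majorant can be much smaller than yours. For $h=\mathbf 1_{[0,1]^{n+m}}$, $y\in[0,1]^m$ fixed and $|x|\to\infty$, both $M_Fh$ and $M_m(M_{n+m}h)$ are comparable to $|x|^{-(n+m)}$, whereas $M^{(1)}M^{(2)}h$ is only comparable to $|x|^{-n}$. This extra information matters only for flag-specific refinements, not for the plain $L^u$ bound stated here.
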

\begin{proof}
Write $M_{n+m}$ for the ordinary maximal operator on
$\mathbb R^{n+m}$ and $M_m$ for the maximal operator in the
$y$ variable. One has
\begin{equation}\label{eq:flag-domination}
                         M_Fh\leq C M_m(M_{n+m}h).
\end{equation}
Indeed, let $R=I\times J$ contain $(x,y)$, with side lengths
$a\leq b$, and let $Q(v,a)$ be the $y$ cube centred at $v$ with
side length $a$. A cube of side at most $2a$ containing
$I\times Q(v,a)$ also contains $(x,v)$, so
\[
 M_{n+m}h(x,v)\geq c a^{-(n+m)}
                 \int_I\int_{Q(v,a)}|h(u,t)|\,dt\,du.
\]
For every $t\in J$, the set of $v\in3J$ with $t\in Q(v,a)$
has measure $a^m$. Integrating in $v\in3J$ and using Fubini
therefore bounds the average on $R$ by
$C|3J|^{-1}\int_{3J}M_{n+m}h(x,v)\,dv$.
Since $y\in3J$, this proves \eqref{eq:flag-domination}.
The scalar maximal theorem and Fubini give the norm estimate
\begin{align}\label{eq:maximal-norm-chain}
 \|M_Fh\|_{L^u}
 &\leq C\|M_m(M_{n+m}h)\|_{L^u}
 \leq C_u\|M_{n+m}h\|_{L^u}
 \leq C_u\|h\|_{L^u},
\end{align}
which proves \eqref{eq:scalar-maximal}.
\end{proof}

\begin{lemma}\label{lem:grid}
Let $0<r\leq1$ and $M>D/r$. Choose arbitrary points
$z_{R'}\in R'\in\mathcal R_{j',k'}^N$ and nonnegative numbers
$c_{R'}$, and set $H=\sum_{R'}c_{R'}\mathbf1_{R'}$. If
$z,\bar z\in R\in\mathcal R_{j,k}^N$, then
\begin{align}\label{eq:grid-bound}
 \sum_{R'\in\mathcal R_{j',k'}^N} |R'|
          P_{A,B,M}(\bar z-z_{R'})c_{R'}
 \leq C\delta^{-D(1/r-1)}
       \left(\frac A{a'}\right)^{n(1/r-1)}
       \left(\frac B{b'}\right)^{m(1/r-1)}
                     [M_F(H^r)(z)]^{1/r},
\end{align}
where $\delta=2^{-N}$, $a'=a_{j'}$ and $b'=b_{j',k'}$. The two
scale-ratio factors are at most
$2^{D(1/r-1)(|j-j'|+|k-k'|)}$.
\end{lemma}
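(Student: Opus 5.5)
This is the Frazier--Jawerth type estimate. We decompose the grid $\mathcal R_{j',k'}^N$ into annular families around $\bar z$ and use that $r\le1$ allows $\ell^r$ to dominate $\ell^1$. Write $\bar z=(\bar x,\bar y)$ and $\delta a'$, $\delta b'$ for the side lengths of $R'$. Since $P_{A,B,M}$ is a product, we group the rectangles $R'=I'\times J'$ according to the two integers
\[
 \mu\geq0,\qquad \nu\geq0,
\]
defined as follows. $I'$ lies in the $x$-region where $|\bar x-x_{R'}|\le A$ ($\mu=0$), or where $2^{\mu-1}A<|\bar x-x_{R'}|\le2^\mu A$. Similarly $\nu$ is defined for $y$, with $B$ in place of $A$. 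On the family $\mathcal F_{\mu,\nu}$ we have
\[
 P_{A,B,M}(\bar z-z_{R'})\le C\,2^{-\mu M-\nu M}A^{-n}2^{-\mu n}B^{-m}2^{-\nu m}.
\]

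Within one family we use $\sum c_{R'}\le(\sum c_{R'}^r)^{1/r}$ and then rewrite, with $|R'|=(\delta a')^n(\delta b')^m$,
\[
 \sum_{R'\in\mathcal F_{\mu,\nu}}c_{R'}^r
 =|R'|^{-1}\int_{\bigcup\mathcal F_{\mu,\nu}}H^r .
\]
The union of $\mathcal F_{\mu,\nu}$ lies in a rectangle $Q_{\mu,\nu}\ni z$ with side lengths about $2^\mu A+\delta a'$ in $x$ and $2^\nu B+\delta b'$ in $y$. Here we use $z,\bar z\in R$ and that $R$ has sides $\delta a_j\le A$ and $\delta b_{j,k}\le B$; also $a'\le A$ and $b'\le B$, so these side lengths are comparable to $2^\mu A$ and $2^\nu B$.

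The one point needing care is that $Q_{\mu,\nu}$ must be a flag rectangle, that is, its $x$-side must not exceed its $y$-side. If $2^\mu A>2^\nu B$, we enlarge the $y$-side to $2^\mu A$. This enlargement only helps, because the integral over the larger set dominates and the volume factor is recomputed. A cleaner alternative is to note that $A\le B$, since $a_j\le b_{j,k}$ for both indices, and to enlarge the $y$-side to $\max(2^\mu A,2^\nu B)$. This costs at most a factor $2^{\mu m}$, which is absorbed by $M>D/r$.

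With $Q_{\mu,\nu}$ a flag rectangle containing $z$, we get $\int_{Q}H^r\le|Q|M_F(H^r)(z)$. Hence the family $\mathcal F_{\mu,\nu}$ contributes at most
\[
 C\,|R'|\,2^{-(\mu+\nu)M}(2^\mu A)^{-n}(2^\nu B)^{-m}\,|R'|^{-1/r}|Q_{\mu,\nu}|^{1/r}[M_F(H^r)(z)]^{1/r}.
\]
Substituting $|Q_{\mu,\nu}|\lesssim(2^\mu A)^n(2^\nu B)^m 2^{\mu m}$ and $|R'|=\delta^D a'^nb'^m$ gives
\[
 \delta^{-D(1/r-1)}(A/a')^{n(1/r-1)}(B/b')^{m(1/r-1)}
\]
times the geometric factor $2^{-(\mu+\nu)(M-D/r)}$. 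That factor is summable in $\mu,\nu$ because $M>D/r$, which yields \eqref{eq:grid-bound}.

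The final remark follows from two comparisons. First, $A/a'=\max(1,2^{j'-j})\le2^{|j-j'|}$. Second, $B/b'\le2^{|j-j'|+|k-k'|}$, since $b_{j,k}=2^{-\min(j,k)}$ and $\min$ is $1$-Lipschitz in $\ell^1$. Together these give at most $2^{D(1/r-1)(|j-j'|+|k-k'|)}$.

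The main obstacle is the flag constraint on the enclosing rectangles. It is handled by the enlargement above, and a slightly larger $M$ absorbs the resulting loss. If the constraint $M>D/r$ alone proves insufficient, we would instead split into the cases $2^\mu A\le2^\nu B$ and its complement.
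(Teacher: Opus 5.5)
Your proof is correct and follows essentially the paper's argument: product dyadic shells around $\bar z$, the bound $\sum c_{R'}\le(\sum c_{R'}^r)^{1/r}$ rewritten as $V^{1-1/r}(\int H^r)^{1/r}$, an enclosing flag rectangle containing $z$ obtained from $A\le B$, and geometric summation using $M>D/r$. The only difference is cosmetic: the paper dilates both sides by the common factor $2^{\max(h,l)}$, whereas you enlarge only the $y$-side to $\max(2^\mu A,2^\nu B)$. Your own exponent count, $2^{-\mu(M+n-D/r)}2^{-\nu(M+m-m/r)}$, already shows that $M>D/r$ suffices, so the closing hedge about needing a larger $M$ or a case split is unnecessary.
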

\begin{proof}
It is enough to consider finitely many nonzero $c_{R'}$. Split
the points into product shells of radii $2^h A$ and $2^l B$,
$h,l\geq0$, with the zeroth shells including distances at most
$2A$, $2B$. On the $(h,l)$ shell the kernel is at most
$C A^{-n}B^{-m}2^{-h(n+M)}2^{-l(m+M)}$, and all source rectangles
in this shell, together with $z$, lie in a flag rectangle
$E_{h,l}$ with side lengths $C2^{\max(h,l)}A$ and
$C2^{\max(h,l)}B$. Since $A\leq B$, this common dilation preserves
the flag condition, and $|E_{h,l}|\leq C\,2^{D\max(h,l)}A^nB^m$.
At the source scale every rectangle has
volume $V=\delta^D(a')^n(b')^m$. Since $r\leq1$,
\begin{equation}\label{eq:shell-step}
 \sum_{R'\text{ in the shell}}Vc_{R'}
 \leq V^{1-1/r}
                  \left(\int_{E_{h,l}}H^r\right)^{1/r}.
\end{equation}
Combining these estimates, the shell contributes at most
\begin{align*}
 C\delta^{-D(1/r-1)}
 \left(\frac A{a'}\right)^{n(1/r-1)}
 \left(\frac B{b'}\right)^{m(1/r-1)}
 2^{-h(n+M)}2^{-l(m+M)}
 2^{D\max(h,l)/r}[M_F(H^r)(z)]^{1/r}.
\end{align*}
The required summability is explicit:
\begin{align}\label{eq:geometric-shell-sum}
 &\sum_{h,l\geq0}2^{-h(n+M)}2^{-l(m+M)}2^{D\max(h,l)/r}\notag\\
 &\quad\leq\left(\sum_{h\geq0}2^{-h(n+M-D/r)}\right)
             \left(\sum_{l\geq0}2^{-l(m+M-D/r)}\right)\notag\\
 &\quad=\frac1{(1-2^{-(n+M-D/r)})(1-2^{-(m+M-D/r)})}<\infty.
\end{align} The last assertion follows from
$\log_2(A/a')=(j'-j)_+$ and
$\log_2(B/b')=(\min(j',k')-\min(j,k))_+$, each bounded by
$|j-j'|+|k-k'|$.
The argument also permits different fixed source and target
grids: the factor $\delta^{-D(1/r-1)}$ then uses the source
mesh, and the constant may depend on both fixed meshes.
\end{proof}

\begin{lemma}\label{lem:matrix}
Suppose nonnegative functions $u_{j,k}$ satisfy, for
$z\in R\in\mathcal R_{j,k}^N$,
\begin{align}\label{eq:matrix-hypothesis}
 u_{j,k}(z)\leq C\sup_{\bar z\in R}\sum_{j',k'}
   2^{-L(|j-j'|+|k-k'|)}
   \sum_{R'\in\mathcal R_{j',k'}^N}|R'|
          P_{A,B,M}(\bar z-z_{R'})c_{j',k',R'}.
\end{align}
Set $H_{j,k}=\sum_R c_{j,k,R}\mathbf1_R$. Let $0<p<\infty$,
$0<q\leq\infty$, choose $0<r<\min(1,p)$, and suppose
$M>D/r$ and $L>|s_1+s_2|+|s_2|+D(1/r-1)$. Then
\begin{equation}\label{eq:matrix-B}
 \|\{2^{-js_1}2^{-(j+k)s_2} u_{j,k}\}\|_{\ell^q(L^p)}
 \leq C\|\{2^{-js_1}2^{-(j+k)s_2} H_{j,k}\}\|_{\ell^q(L^p)}.
\end{equation}
\end{lemma}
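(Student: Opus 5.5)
The plan is to combine Lemma~\ref{lem:grid} with the scalar maximal bound of Lemma~\ref{lem:maximal}, and then to sum in the scale indices with a discrete Young/convolution inequality in $\ell^q$. Fix $z\in R\in\mathcal R_{j,k}^N$. Apply Lemma~\ref{lem:grid} to each inner sum in \eqref{eq:matrix-hypothesis}, with $c_{R'}=c_{j',k',R'}$ and $H=H_{j',k'}$. The right-hand side of \eqref{eq:grid-bound} does not depend on $\bar z\in R$, so the supremum over $\bar z$ costs nothing. Using the stated bound $2^{D(1/r-1)(|j-j'|+|k-k'|)}$ on the scale-ratio factors, we obtain the pointwise estimate
\[
 u_{j,k}(z)\leq C\sum_{j',k'}2^{-(L-D(1/r-1))(|j-j'|+|k-k'|)}
   \bigl[M_F(H_{j',k'}^r)(z)\bigr]^{1/r},
\]
valid for every $z$, since the rectangles $R$ tile the space. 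The constant absorbs the factor $\delta^{-D(1/r-1)}$.

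Next we insert the weights. Write $w_{j,k}=2^{-js_1}2^{-(j+k)s_2}=2^{-j(s_1+s_2)}2^{-ks_2}$. Then
\[
 w_{j,k}/w_{j',k'}\leq 2^{|s_1+s_2||j-j'|+|s_2||k-k'|}
 \leq 2^{(|s_1+s_2|+|s_2|)(|j-j'|+|k-k'|)}.
\]
Put $\varepsilon=L-|s_1+s_2|-|s_2|-D(1/r-1)>0$ and $G_{J'}=w_{J'}H_{J'}$. Since $M_F$ is positively homogeneous, the bound becomes
\[
 w_Ju_J\leq C\sum_{J'}2^{-\varepsilon|J-J'|_1}\bigl[M_F(G_{J'}^r)\bigr]^{1/r}.
\]

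Now take $L^p$ norms. Since $p/r>1$, Lemma~\ref{lem:maximal} with $u=p/r$ gives
\[
 \|[M_F(G^r)]^{1/r}\|_{L^p}=\|M_F(G^r)\|_{L^{p/r}}^{1/r}\leq C\|G^r\|_{L^{p/r}}^{1/r}=C\|G\|_{L^p}.
\]
To pass the $L^p$ quasi-norm through the sum over $J'$, we use the $\min(1,p)$-triangle inequality: with $\rho=\min(1,p)$, $\|\sum_{J'}F_{J'}\|_{L^p}^\rho\leq\sum_{J'}\|F_{J'}\|_{L^p}^\rho$. Hence
\[
 \|w_Ju_J\|_{L^p}^\rho\leq C\sum_{J'}2^{-\varepsilon\rho|J-J'|_1}\|G_{J'}\|_{L^p}^\rho.
\]

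Finally we take the $\ell^q$ norm in $J\in\mathbb Z^2$, which is a convolution with the summable kernel $2^{-\varepsilon\rho|\cdot|_1}$. If $q/\rho\geq1$, Young's inequality on $\ell^{q/\rho}(\mathbb Z^2)$ applied to the sequence $\|G_{J'}\|_{L^p}^\rho$ gives the claim; the case $q=\infty$ is handled by a supremum. If $q<\rho$, we instead raise to the power $q/\rho<1$ and use subadditivity: $(\sum a_i)^{q/\rho}\leq\sum a_i^{q/\rho}$, and then sum the geometric kernel $2^{-\varepsilon q|J-J'|_1}$ over $J$. Either way we obtain \eqref{eq:matrix-B}.

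The main obstacle is bookkeeping rather than a new idea. One has to check that the loss $D(1/r-1)$ coming from Lemma~\ref{lem:grid} and the weight-ratio growth are both beaten by $L$, which is exactly the hypothesis on $L$. One also has to check that the pointwise bound is uniform in $z$, so that $M_F$ is evaluated at the same point $z$ for all source scales. A minor technicality is the case of infinitely many nonzero $c$. This is handled by monotone convergence, applying the finite case of Lemma~\ref{lem:grid} to truncations.
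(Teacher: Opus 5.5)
Your proposal is correct and follows the paper's proof: Lemma~\ref{lem:grid} for the pointwise bound, the weight-ratio transfer producing $\varepsilon=L-|s_1+s_2|-|s_2|-D(1/r-1)>0$, the $\min(1,p)$-triangle inequality together with the maximal bound at exponent $p/r>1$, and a discrete Young inequality in the scale indices. The only cosmetic difference is how you handle small $q$: the paper takes $\tau=\min(1,p,q)$ from the outset, so that $q/\tau\geq1$ always. You instead use $\rho=\min(1,p)$ and treat $q<\rho$ separately by subadditivity, which amounts to the same thing.
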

\begin{proof}
The weight ratio satisfies
\begin{equation}\label{eq:smoothness-transfer}
 2^{-js_1}2^{-(j+k)s_2}2^{j's_1}2^{(j'+k')s_2}
 \leq 2^{|s_1+s_2||j-j'|+|s_2||k-k'|}.
\end{equation}
Lemma~\ref{lem:grid} therefore gives
\begin{equation}\label{eq:pointwise-matrix}
 2^{-js_1}2^{-(j+k)s_2} u_{j,k}
 \leq C\sum_{j',k'}2^{-\varepsilon(|j-j'|+|k-k'|)}
       [M_F((2^{-j's_1}2^{-(j'+k')s_2}H_{j',k'})^r)]^{1/r},
\end{equation}
with $\varepsilon=L-|s_1+s_2|-|s_2|-D(1/r-1)>0$. Put
$\tau=\min(1,p,q)$ when $q<\infty$ and $\tau=\min(1,p)$ when
$q=\infty$. The $\tau$-triangle inequality in $L^p$ and
Lemma~\ref{lem:maximal} give
\begin{equation}\label{eq:matrix-Lp}
 \|2^{-js_1}2^{-(j+k)s_2} u_{j,k}\|_{L^p}^\tau
 \leq C\sum_{j',k'}2^{-\varepsilon\tau(|j-j'|+|k-k'|)}
                    \|2^{-j's_1}2^{-(j'+k')s_2} H_{j',k'}\|_{L^p}^\tau.
\end{equation}
The scalar maximal step used in the preceding estimate is precisely
\begin{equation}\label{eq:powered-maximal-norm}
 \|[M_F(|H|^r)]^{1/r}\|_{L^p}
 =\|M_F(|H|^r)\|_{L^{p/r}}^{1/r}
 \leq C\||H|^r\|_{L^{p/r}}^{1/r}=C\|H\|_{L^p},
 \qquad p/r>1.
\end{equation}
Put $\kappa_{h,l}=2^{-\varepsilon\tau(|h|+|l|)}$ and let
$A_J$ and $B_J$ denote the two weighted $L^p$ block norms in
\eqref{eq:matrix-Lp}. Then
\begin{align}\label{eq:young-expanded}
 \|\{A_J\}\|_{\ell^q}^{\tau}
 &=\|\{A_J^{\tau}\}\|_{\ell^{q/\tau}}
 \leq C\|\kappa*\{B_J^{\tau}\}\|_{\ell^{q/\tau}}\notag\\
 &\leq C\|\kappa\|_{\ell^1}\|\{B_J^{\tau}\}\|_{\ell^{q/\tau}}
 =C\left(\frac{1+2^{-\varepsilon\tau}}
                  {1-2^{-\varepsilon\tau}}\right)^2
       \|\{B_J\}\|_{\ell^q}^{\tau}.
\end{align}
Here $q/\tau\geq1$; for $q=\infty$ the same calculation uses
$\ell^\infty$. Taking the $\tau$th root proves \eqref{eq:matrix-B}.
\end{proof}

\begin{lemma}\label{lem:peetre}
Let $0<a\leq b$, $0<r\leq1$ and $\lambda r>D$.
If the Fourier support of a
tempered distribution $h$ is contained in
$\{(\xi,\eta):|\xi|\leq C_0/a,\ |\eta|\leq C_0/b\}$, then
\begin{equation}\label{eq:regularized-peetre}
 h^*_{a,b,\lambda}(z)
       \leq C[M_F(|h|^r)(z)]^{1/r},
\end{equation}
where
$\rho_{a,b}(x,y)=\max(|x|/a,|y|/b)$ and
$h^*_{a,b,\lambda}(z)=\sup_u|h(u)|(1+\rho_{a,b}(z-u))^{-\lambda}$.
For $u,z$ in the same rectangle of side lengths $\delta a,\delta b$,
$0<\delta\leq1$,
\begin{equation}\label{eq:normalised-derivative}
 a|\nabla_x h(u)|+b|\nabla_y h(u)|
       \leq C[M_F(|h|^r)(z)]^{1/r}.
\end{equation}
\end{lemma}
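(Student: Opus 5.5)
We reduce to the classical one-parameter Peetre inequality by an anisotropic rescaling, and then replace the usual isotropic maximal function by $M_F$. The rescaling is legitimate because $a\leq b$: under $x\mapsto ax$, $y\mapsto by$, a unit cube becomes a rectangle with sides $a\le b$, which is a flag rectangle. More generally, every rectangle $I\times J$ with $\ell(I)=\kappa a$ and $\ell(J)=\kappa b$ is a flag rectangle. Set $g(x,y)=h(ax,by)$. Then $\widehat g$ is supported in the fixed box $|\xi|\le C_0$, $|\eta|\le C_0$, and
\[
h^*_{a,b,\lambda}(z)=g^*_\lambda(\tilde z),\qquad g^*_\lambda(w)=\sup_v|g(v)|(1+|w-v|_\infty)^{-\lambda},
\]
where $\tilde z$ is the rescaled point. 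Both sides of \eqref{eq:regularized-peetre} therefore transform compatibly: averages of $|g|^r$ over cubes of side $\kappa$ centred near $\tilde z$ are exactly averages of $|h|^r$ over flag rectangles of sides $\kappa a,\kappa b$ near $z$.

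The core step is the standard Peetre argument for $g$. Choose $\chi\in\mathcal S$ with $\widehat\chi=1$ on the box, so that $g=g*\chi$. This gives the Bernstein-type bound
\[
|\nabla g(v)|\le C\,g^*_\lambda(w)(1+|w-v|)^\lambda,
\]
obtained by differentiating $\chi$ and using the decay of $\nabla\chi$ to absorb the weight. Next apply the mean value theorem on a ball $B(v,\epsilon)$. Writing $|g(v)|\le \inf_{B(v,\epsilon)}|g|+\epsilon\sup_{B(v,\epsilon)}|\nabla g|$, and bounding the infimum by an $L^r$ average, we get
\[
|g(v)|\le C\epsilon^{-D/r}\Big(\int_{B(v,\epsilon)}|g|^r\Big)^{1/r}+C\epsilon\,g^*_\lambda(w)(1+|w-v|)^\lambda.
\]
The ball $B(v,\epsilon)$ lies in a cube centred at $w$ of side about $1+|w-v|$. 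Hence the integral is at most $C(1+|w-v|)^{D}M(|g|^r)(w)$, with the cube maximal function corresponding to $M_F$ after rescaling. Dividing by $(1+|w-v|)^\lambda$ and using $\lambda\ge D/r$ gives
\[
g^*_\lambda\le C\epsilon^{-D/r}[M(|g|^r)]^{1/r}+C\epsilon g^*_\lambda .
\]
Choosing $\epsilon$ small, we absorb the last term into the left side.

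The main obstacle is that this absorption requires $g^*_\lambda(w)<\infty$ a priori. The standard remedy is to first run the argument with $g^*_{\lambda'}$ for large $\lambda'$. This quantity is finite because $g$ is a band-limited tempered distribution, hence a smooth function of polynomial growth by Paley–Wiener–Schwartz. The resulting estimate, $g^*_{\lambda'}\le C[M(|g|^r)]^{1/r}$ with the maximal function taken for the $r$ corresponding to $\lambda'$, is then upgraded to the stated $\lambda$. Alternatively, one can use the classical trick of proving the bound for $g_\epsilon(v)=g(v)\widehat\phi(\epsilon v)$, with the Fourier support slightly enlarged, and letting $\epsilon\to0$. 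A subtle point is that the maximal function must be centred at the point $z$, not merely controlled up to a weight. The cubes used do contain $\tilde z$, so they correspond to flag rectangles containing $z$, as required by \eqref{eq:MF}.

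For \eqref{eq:normalised-derivative}, rescale in the same way. The quantity $a|\nabla_xh(u)|+b|\nabla_yh(u)|$ equals $|\nabla g(\tilde u)|$ up to constants. By the derivative bound above with $w=\tilde z$, we have $|\nabla g(\tilde u)|\le Cg^*_\lambda(\tilde z)(1+|\tilde z-\tilde u|)^\lambda$. Since $u$ and $z$ lie in the same rectangle of sides $\delta a,\delta b$ with $\delta\le1$, we get $|\tilde z-\tilde u|\le C$. Applying \eqref{eq:regularized-peetre} then concludes the proof.
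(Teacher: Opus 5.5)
Your argument is correct if you use the second of your two remedies for a priori finiteness. Its core estimate, however, differs from the paper's.

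\textbf{How the two proofs differ.} The paper does not rescale. It writes $h=\gamma_{a,b}*h$, puts $P=h^*_{a,b,\lambda}(z)$, and splits $|h(v)|=|h(v)|^r|h(v)|^{1-r}$ with $|h(v)|^{1-r}\le P^{1-r}(1+\rho_{a,b}(z-v))^{\lambda(1-r)}$. A shell decomposition into flag rectangles of sides $2^la,2^lb$, enlarged to contain $z$, then gives $P\le CP^{1-r}M_F(|h|^r)(z)$, and one divides by $P^{1-r}$.

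You instead normalise to the unit box and run the classical mean-value argument, with a Bernstein bound and $\epsilon$-absorption. This has two advantages:
\begin{itemize}
\item Your rescaling shows exactly where $a\le b$ enters: cubes through $\tilde z$ pull back to flag rectangles through $z$, with identical averages.
\item Your derivative bound follows directly for arbitrary band-limited $h$ from the Bernstein estimate and the maximal bound. The paper proves it for Schwartz $h$ and then passes to the limit.
\end{itemize}
The paper's $r$-power trick, for its part, avoids the mean value theorem and the choice of $\epsilon$.

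Both proofs need $h^*_{a,b,\lambda}(z)<\infty$ before absorbing or dividing. The paper settles this exactly by your alternative, $h_\epsilon=h\,\vartheta(\epsilon x/a,\epsilon y/b)$ with $|\vartheta|\le1$ and $\vartheta(0)=1$.

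\textbf{The first remedy fails as stated.} For $\lambda'\ge\lambda$ one has $g^*_{\lambda'}\le g^*_\lambda$. So a bound on $g^*_{\lambda'}$, with any exponent in the maximal function, says nothing about whether $g^*_\lambda$ is finite, and you give no mechanism for the ``upgrade''.

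It can be repaired. Take the radius $\epsilon=(1+|w-v|)^{-\theta}$ in the mean-value step, using only $M(|g|^r)(w)<\infty$. This lowers a finite exponent $\mu$ to $\max\{(1+\theta)D/r,\mu-\theta\}$. After finitely many steps the exponent reaches $(1+\theta)D/r\le\lambda$ for small $\theta$, using $\lambda r>D$ strictly. But this is an additional argument you have not supplied.

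The regularisation is the cleaner route. When you write it out, record three facts:
\begin{itemize}
\item $|g_\epsilon|\le\|\widehat\phi\|_\infty|g|$, so $M(|g_\epsilon|^r)\le CM(|g|^r)$ uniformly in $\epsilon$.
\item $\widehat\phi(0)=1$ gives pointwise convergence $g_\epsilon\to g$.
\item $\phi$ must have compact support, so that all $g_\epsilon$ share one enlarged Fourier box and the Peetre constants are uniform in $\epsilon$.
\end{itemize}
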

\begin{proof}
First suppose that $h$ is a Schwartz function. Choose a
Schwartz reproducing kernel
$\gamma_{a,b}(x,y)=a^{-n}b^{-m}\gamma(x/a,y/b)$ whose Fourier
transform is one on the box with constant $C_0+1$, so that
$h=\gamma_{a,b}*h$. Write
$P=h^*_{a,b,\lambda}(z)<\infty$. The case $h=0$ is immediate.
Using
$|h(v)|^{1-r}\leq P^{1-r}(1+\rho_{a,b}(z-v))^{\lambda(1-r)}$ and
the triangle inequality for $\rho_{a,b}$,
\begin{equation}\label{eq:peetre-step}
 \frac{|h(u)|}{(1+\rho_{a,b}(z-u))^\lambda}
 \leq C P^{1-r}(1+\rho_{a,b}(z-u))^{-\lambda r}
 \int a^{-n}b^{-m}(1+\rho_{a,b}(u-v))^{-T}|h(v)|^r\,dv,
\end{equation}
with $T$ arbitrarily large. Because $a\leq b$, the rectangles
with side lengths $2^l a,2^l b$ are flag rectangles.
Write $d=\rho_{a,b}(z-u)$. Decomposition into these rectangles
centred at $u$, followed by enlargement to contain $z$, gives
\begin{align}\label{eq:peetre-shell-expanded}
 &\int a^{-n}b^{-m}(1+\rho_{a,b}(u-v))^{-T}|h(v)|^r\,dv\notag\\
 &\quad\leq C\sum_{l\geq0}2^{-lT}a^{-n}b^{-m}
             \int_{\rho_{a,b}(u-v)\leq2^{l+1}}|h(v)|^r\,dv\notag\\
 &\quad\leq C\sum_{l\geq0}2^{-lT}(2^l+d)^D M_F(|h|^r)(z)\notag\\
 &\quad\leq C(1+d)^D\left(\sum_{l\geq0}2^{-l(T-D)}\right)
                         M_F(|h|^r)(z).
\end{align}
Choose $T>D$. Since $\lambda r>D$, \eqref{eq:peetre-step} yields
\[
 P\leq C P^{1-r}\sup_{d\geq0}(1+d)^{D-\lambda r}
                  M_F(|h|^r)(z)
 \leq C P^{1-r}M_F(|h|^r)(z).
\]
Dividing by $P^{1-r}$ proves \eqref{eq:regularized-peetre}. Differentiating the reproducing
identity and using the rapid decay of the differentiated
kernel gives
\[
 a|\nabla_xh(u)|+b|\nabla_yh(u)|
 \leq C P(1+\rho_{a,b}(z-u))^\lambda.
\]
For $u,z$ in the same rectangle this proves
\eqref{eq:normalised-derivative}.

For a general band-limited tempered distribution, $h$ is a
smooth function whose derivatives have polynomial growth.
Choose $\vartheta\in\mathcal S$ with
$\widehat\vartheta\in C_c^\infty$ supported in
$\{|\xi|\leq1,\ |\eta|\leq1\}$, $\vartheta(0)=1$, and
$|\vartheta|\leq1$. Such a function is obtained by normalising
the inverse Fourier transform of a nonnegative even cutoff.
For $0<\epsilon\leq1$, set
\[
 h_\epsilon(x,y)
   =h(x,y)\vartheta(\epsilon x/a,\epsilon y/b).
\]
Then $h_\epsilon\in\mathcal S$, its Fourier support lies in
the box with constant $C_0+1$, and
$|h_\epsilon|\leq|h|$. The preceding proof, with a reproducing
kernel equal to one on this enlarged box, has constants
independent of $\epsilon$. Since $h_\epsilon\to h$ locally
with all derivatives, passage to the limit proves the
pointwise derivative bound, and taking the supremum over $u$
proves the Peetre bound. If the maximal function on the right
is infinite, the assertions hold in the extended sense.
\end{proof}

\begin{lemma}\label{lem:Linfty-sampling}
For a sufficiently fine fixed grid,
\begin{equation}\label{eq:Linfty-sampling}
 \|\psi_J*g\|_{L^\infty}
       \asymp\sup_{R\in\mathcal R_J^N}|(\psi_J*g)(z_R)|,
\end{equation}
uniformly in $J$ and $g\in\mathcal S_F'$.
\end{lemma}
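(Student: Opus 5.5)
The inequality $\sup_R|(\psi_J*g)(z_R)|\le\|\psi_J*g\|_{L^\infty}$ is trivial, since the lower corners are points of $\mathbb R^{n+m}$ and $\psi_J*g$ is a continuous function. All the work is in the reverse bound. I will prove it with the normalised gradient estimate contained in the proof of Lemma~\ref{lem:peetre}, written in $L^\infty$ form. Put $h=\psi_J*g$ with $J=(j,k)$. Because $\psi_J$ belongs to $\mathcal S_F$, this function is independent of the tempered representative of $g$. Its Fourier support lies in the box $|\xi|\le 2a_j^{-1}$, $|\eta|\le 2b_{j,k}^{-1}$. Set $a=a_j$ and $b=b_{j,k}$; then $a\le b$, so Lemma~\ref{lem:peetre} applies with $C_0=2$. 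We may assume $\|h\|_{L^\infty}<\infty$, since otherwise the assertion is read in the extended sense.

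Fix a reproducing kernel $\gamma_{a,b}(x,y)=a^{-n}b^{-m}\gamma(x/a,y/b)$ with $\widehat\gamma=1$ on the box with constant $C_0+1$. Then $h=\gamma_{a,b}*h$, and differentiation gives
\[
 a\|\nabla_xh\|_{L^\infty}+b\|\nabla_yh\|_{L^\infty}\le C_\gamma\|h\|_{L^\infty},
\]
where $C_\gamma=\|\nabla\gamma\|_{L^1}$ depends only on the fixed box and not on $J$. For a general band-limited tempered $h$, I will either use the regularisation $h_\epsilon$ from the proof of Lemma~\ref{lem:peetre} (it satisfies $|h_\epsilon|\le|h|$ and converges locally with derivatives), or note directly that $\gamma_{a,b}*h=h$ holds distributionally and the integral converges when $h$ is bounded.

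Now take $u$ in a rectangle $R\in\mathcal R^N_J$. The mean value theorem along the segment from $z_R$ to $u$, whose $x$-increment is at most $\sqrt n\,2^{-N}a$ and whose $y$-increment is at most $\sqrt m\,2^{-N}b$, gives
\[
 |h(u)|\le |h(z_R)|+2^{-N}\sqrt D\,C_\gamma\|h\|_{L^\infty}.
\]
Taking the supremum over $u$ and $R$ yields $\|h\|_\infty\le\sup_R|h(z_R)|+2^{-N}\sqrt D\,C_\gamma\|h\|_\infty$. Choose $N_0$ so large that $2^{-N}\sqrt D\,C_\gamma\le1/2$; this is the ``sufficiently fine grid'' of the statement. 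The last term can then be absorbed, because $\|h\|_\infty$ was assumed finite, and we obtain $\|h\|_\infty\le 2\sup_R|h(z_R)|$.

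The main obstacle is the case where $\sup_R|h(z_R)|<\infty$ but $\|h\|_\infty$ is not known a priori to be finite, since the absorption step needs finiteness. I will first handle this with the Peetre maximal function. Lemma~\ref{lem:peetre}, combined with the reproducing identity, bounds $|h|$ by polynomial growth, and the same sampling argument applied to $h(u)(1+\rho_{a,b}(u))^{-\lambda}$ controls the weighted supremum. If this becomes awkward, I will instead apply the estimate to the truncations $h_\epsilon$, which are bounded but are no longer exactly band-limited in the original box. For them the gradient bound holds with a constant uniform in $\epsilon$, because their spectra lie in the enlarged box with constant $C_0+1$. The grid is chosen fine enough for that enlarged box, which the paper explicitly allows. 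This gives $\|h_\epsilon\|_\infty\le2\sup_R|h_\epsilon(z_R)|\le2\sup_R|h(z_R)|$, and letting $\epsilon\to0$ completes the proof.
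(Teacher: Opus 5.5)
Your proposal is correct and follows the paper's proof essentially step for step. You get the gradient bound from the reproducing kernel, apply the mean value theorem on each rectangle with absorption when $h$ is bounded, and then pass to the regularisations $h_\epsilon$, using the uniformly enlarged support box and $|h_\epsilon|\le|h|$, to remove the a priori boundedness assumption. The Peetre-weighted detour you mention first is unnecessary; as sketched it would also need extra work, since $h(u)(1+\rho_{a,b}(u))^{-\lambda}$ is not band-limited. You can go straight to the $h_\epsilon$ argument, as the paper does.
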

\begin{proof}
The sampled supremum is bounded by the continuous one. First
suppose that $h=\psi_J*g$ is bounded. The derivative estimate
\eqref{eq:normalised-derivative} with $r=1$ gives
$\sup_z\{a_j|\nabla_xh(z)|+b_{j,k}|\nabla_yh(z)|\}\leq C\|h\|_\infty$.
The mean value theorem on a rectangle yields
$\|h\|_\infty\leq\sup_R|h(z_R)|+C2^{-N}\|h\|_\infty$; choosing
$N$ so that $C2^{-N}<1/2$ and absorbing proves the reverse
inequality. For a general $h$, use the band-limited
regularisation $h_\epsilon$ from Lemma~\ref{lem:peetre}.
These Schwartz functions have one uniformly enlarged support
box, for which $N_0$ has also been chosen. The same absorption
therefore gives
\[
 \|h_\epsilon\|_\infty
 \leq 2\sup_R|h_\epsilon(z_R)|
 \leq 2\sup_R|h(z_R)|.
\]
Letting $\epsilon\to0$ at each point and then taking the
supremum proves the assertion, also when boundedness of $h$
is not known in advance.
\end{proof}

We are now in a position to prove Theorem~\ref{thm:retraction}.
Choose $0<r<\min(1,p)$. For $z\in R$, Lemma~\ref{lem:peetre}
applied to $h=\psi_{j,k}*f$ gives
\begin{align}\label{eq:analysis-block-expanded}
 \left(\sum_R|h(z_R)|^p|R|\right)^{1/p}
 &=\left\|\sum_R|h(z_R)|\mathbf1_R\right\|_{L^p}\notag\\
 &\leq C\|[M_F(|h|^r)]^{1/r}\|_{L^p}
 \leq C\|h\|_{L^p}.
\end{align}
Multiplying by the smoothness weight and applying the outer
$\ell^q$ quasi-norm, we obtain
\begin{equation}\label{eq:analysis-norm-expanded}
 \|S_\psi f\|_{\dot b^{s,q}_{p,F}}
 \leq C\left\|\left\{2^{-js_1}2^{-(j+k)s_2}
                          \|\psi_{j,k}*f\|_{L^p}\right\}_{j,k}\right\|_{\ell^q}
 =C\|f\|_{\dot B^{s,q}_{p,F}}.
\end{equation}
For the synthesis estimate, a coefficient sequence satisfies
\begin{equation}\label{eq:coefficient-growth}
 |t_{j,k,R}|
 \leq 2^{js_1}2^{(j+k)s_2}|R|^{-1/p}
                         \|t\|_{\dot b^{s,q}_{p,F}},
\end{equation}
and $|R|^{-1/p}\leq C2^{C_1(|j|+|k|)}$. For $M>D$ the lattice
counting bound
\begin{equation}\label{eq:lattice-counting}
 \sum_{R\in\mathcal R_{j,k}^N}|R|(1+|z_R|)^{-M}
 \leq C_M(1+2^{-N}a_j)^n(1+2^{-N}b_{j,k})^m
\end{equation}
holds. In view of \eqref{eq:testcoeff}, the series
\begin{equation}\label{eq:synthesis-abs}
 \sum_{j,k,R}|R|\,|t_{j,k,R}|\,
                  |\langle\psi_{j,k}(\,\cdot-z_R),h\rangle|
\end{equation}
is bounded, for a fixed constant $C_2$ and every sufficiently large
$L$, by
\begin{align}\label{eq:synthesis-summability-expanded}
 &\sum_{j,k,R}|R|\,|t_{j,k,R}|\,
        |\langle\psi_{j,k}(\,\cdot-z_R),h\rangle|\notag\\
 &\quad\leq C\mathfrak p(h)\|t\|_{\dot b^{s,q}_{p,F}}
                    \sum_{j,k}2^{-(L-C_2)(|j|+|k|)}\notag\\
 &\quad\leq C_L\mathfrak p(h)\|t\|_{\dot b^{s,q}_{p,F}},
 \qquad L>C_2.
\end{align}
Here $\mathfrak p$ is a continuous Schwartz seminorm, and $C_2$
absorbs the volume, smoothness and lattice-growth factors.
Thus $T_\psi t$ is well defined and continuous in $\mathcal S_F'$. For finite
$t$, applying any admissible $\varphi_{j,k}$ to the synthesis
formula and using \eqref{eq:almost} and Lemma~\ref{lem:matrix}
gives
\begin{equation}\label{eq:synthesis-bound}
 \|T_\psi t\|_{\dot B^{s,q}_{p,F}}
                  \leq C\|t\|_{\dot b^{s,q}_{p,F}},
\end{equation}
where the norm on the left may be computed with $\varphi$.
The general case follows by finite truncation and Fatou's lemma.
The exact formula \eqref{eq:exactCRF} gives $T_\psi S_\psi=I$,
and the two bounded maps give \eqref{eq:discrete-norm}.
Computing the norm with another admissible pair $\varphi$ gives
\begin{equation}\label{eq:pair-independence-expanded}
 \|f\|_{\dot B^{s,q}_{p,F}(\varphi)}
 =\|T_\psi S_\psi f\|_{\dot B^{s,q}_{p,F}(\varphi)}
 \leq C\|S_\psi f\|_{\dot b^{s,q}_{p,F}}
 \leq C\|f\|_{\dot B^{s,q}_{p,F}(\psi)}.
\end{equation}
Interchanging $\psi$ and $\varphi$ proves the converse inequality
and hence independence of the admissible pair.

The coefficient space is complete, so if $f_\nu$ is Cauchy then
$S_\psi f_\nu$ converges to some $t$ and
\begin{equation}\label{eq:completeness-expanded}
 \|f_\nu-T_\psi t\|_{\dot B^{s,q}_{p,F}}
 =\|T_\psi(S_\psi f_\nu-t)\|_{\dot B^{s,q}_{p,F}}
 \leq C\|S_\psi f_\nu-t\|_{\dot b^{s,q}_{p,F}}\longrightarrow0.
\end{equation} The quasi-norm is definite by the
reproducing formula, and it is a norm when $p,q\geq1$. For
$\phi\in\mathcal S_F$, \eqref{eq:testcoeff} shows that its block
norms decrease faster than any fixed exponential, so
$\mathcal S_F\subset\dot B^{s,q}_{p,F}$ continuously. Finite
synthesis sums belong to $\mathcal S_F$, and when $p,q<\infty$
finite sequences are dense in $\dot b^{s,q}_{p,F}$, which proves
the density assertion.

It remains to prove \eqref{eq:PP}. Let $h$ have the support
box of one bump function and set
\[
 U_h=\sum_R\sup_R|h|\,\mathbf1_R,\qquad
 L_h=\sum_R\inf_R|h|\,\mathbf1_R .
\]
Lemma~\ref{lem:peetre} gives
$\|U_h\|_p\leq C_p\|h\|_p$. Its derivative bound and the
mean value theorem also give, with $\delta=2^{-N}$,
\[
 U_h(z)\leq L_h(z)+C\delta[M_F(|h|^r)(z)]^{1/r}.
\]
Put $\tau=\min(1,p)$. If $h\in L^p$, the scalar maximal
inequality and $|h|\leq U_h$ imply
\[
 \|h\|_p^\tau
 \leq\|L_h\|_p^\tau+(C_p\delta)^\tau\|h\|_p^\tau .
\]
Choose $N_0$ so that $(C_p2^{-N_0})^\tau\leq1/2$ for
the support boxes and spatial exponents in use.
Absorption proves $\|h\|_p\leq C_p\|L_h\|_p$.
For a general band-limited $h$, apply this bound to
$h_\epsilon$ above, noting that $L_{h_\epsilon}\leq L_h$,
and use Fatou's lemma. Thus
$\|U_h\|_p\asymp\|h\|_p\asymp\|L_h\|_p$, also as an
extended-norm assertion. Multiplying by $2^{-js_1}2^{-(j+k)s_2}$ and
taking the outer $\ell^q$ quasi-norm gives the same-pair
comparison, and \eqref{eq:PP} then follows from the independence
of the analysing pair established above.

\section{Scale tails and the little space: proof of Theorem~\ref{thm:little}}
\label{sec:tails}

Fix an admissible pair and a sufficiently fine grid.

\begin{lemma}\label{lem:single-scale}
Let $0<p<\infty$. Uniformly in $j,k$,
\begin{equation}\label{eq:single-scale}
 \left(\sum_{R\in\mathcal R_{j,k}^N}
       \bigg(\sup_R|\psi_{j,k}*f|\bigg)^p |R|\right)^{1/p}
 \leq C\|\psi_{j,k}*f\|_{L^p}.
\end{equation}
\end{lemma}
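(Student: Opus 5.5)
The plan is to reduce the estimate to the Peetre-type bound of Lemma~\ref{lem:peetre} and the scalar maximal theorem, Lemma~\ref{lem:maximal}, as in the analysis bound \eqref{eq:analysis-block-expanded}. The only change is that the sample value $|h(z_R)|$ is replaced by the supremum over the whole rectangle.

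Fix $j,k$ with $k\leq j+1$; otherwise $\psi_{j,k}=0$ and there is nothing to prove. Put $h=\psi_{j,k}*f$. The Fourier support of $h$ lies in the box $|\xi|\leq 2/a_j$, $|\eta|\leq 2/b_{j,k}$ with $a=a_j\leq b=b_{j,k}$. Choose $0<r<\min(1,p)$ and $\lambda>D/r$.

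For $R\in\mathcal R_{j,k}^N$ and any $z,u\in R$ we have $|x_u-x_z|\leq\sqrt n\,2^{-N}a$ and $|y_u-y_z|\leq\sqrt m\,2^{-N}b$. Hence $\rho_{a,b}(z-u)\leq C$ with $C$ independent of $j,k,N$, and
\[
 |h(u)|\leq (1+C)^\lambda h^*_{a,b,\lambda}(z).
\]
Taking the supremum over $u\in R$ and applying \eqref{eq:regularized-peetre} gives, for every $z\in R$,
\[
 \sup_R|h|\leq C[M_F(|h|^r)(z)]^{1/r}.
\]
This yields the pointwise bound $\sum_R\sup_R|h|\,\mathbf 1_R\leq C[M_F(|h|^r)]^{1/r}$.

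The left side of \eqref{eq:single-scale} is exactly the $L^p$ norm of $\sum_R\sup_R|h|\,\mathbf1_R$, because the rectangles in $\mathcal R_{j,k}^N$ are disjoint. Since $p/r>1$, the computation \eqref{eq:powered-maximal-norm} bounds this by $C\|h\|_{L^p}$. The constants in Lemma~\ref{lem:peetre} depend only on $C_0$, $r$, $\lambda$ and the dimensions, and $M_F$ is a single operator, so the bound is uniform in $j,k$.

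The main obstacle is justifying the use of Lemma~\ref{lem:peetre} when $h$ is only a band-limited tempered distribution, not known to be in $L^p$. Lemma~\ref{lem:peetre} already covers this case, with the assertion holding in the extended sense. If $\|h\|_{L^p}=\infty$ there is nothing to prove. Otherwise the maximal function is finite almost everywhere, and the pointwise bound holds wherever $M_F(|h|^r)$ is finite. One must also check that the rectangle being a flag rectangle is not needed here: it is only $a\leq b$ that matters, and this is ensured by $b_{j,k}=2^{-j}\vee2^{-k}\geq a_j$.
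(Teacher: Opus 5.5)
Your proposal is correct and follows the paper's own proof. Both arguments bound $\sup_R|h|$ pointwise on each grid rectangle by $C[M_F(|h|^r)]^{1/r}$, using that $\rho_{a_j,b_{j,k}}$ is uniformly bounded there together with Lemma~\ref{lem:peetre}, and then conclude with the scalar maximal theorem at exponent $p/r>1$. Your added details (disjointness of the grid, uniformity of the constants, the extended-sense case for general band-limited $h$) are what the paper's one-line proof leaves implicit.
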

\begin{proof}
Choose $0<r<\min(1,p)$ and $\lambda r>D$. On each rectangle
$\rho_{a_j,b_{j,k}}$ is uniformly bounded, so
Lemma~\ref{lem:peetre} gives
$\sum_R\sup_R|h|\mathbf1_R\leq C[M_F(|h|^r)]^{1/r}$ for
$h=\psi_{j,k}*f$; Lemma~\ref{lem:maximal} with $p/r>1$ proves
\eqref{eq:single-scale}.
\end{proof}

For $K\geq0$ and $L\geq1$ define the finite Calder\'on sum
\begin{equation}\label{eq:finite-sum}
 f_{K,L}
 =\sum_{|J|_1\leq K}
   \sum_{\substack{R\in\mathcal R_{j,k}^N\\|z_R|\leq L}}
        |R|\,t_{J,R}\psi_{j,k}(\,\cdot-z_R),
\end{equation}
where $t_{J,R}=(\psi_{j,k}*f)(z_R)$. Define the spatial tail
\begin{equation}\label{eq:space-tail}
 \tau_{f,q}(K,L)
 =\left\{\sum_{|J|_1\leq K}\left[
      2^{-js_1}2^{-(j+k)s_2}\left(\sum_{\substack{R\in\mathcal R_{j,k}^N\\|z_R|>L}}
                    |t_{J,R}|^p |R|\right)^{1/p}
                       \right]^q\right\}^{1/q}.
\end{equation}

\begin{proposition}\label{prop:quantitative-approximation}
Let $s\in\mathbb R^2$, $0<p<\infty$ and $0<q\leq\infty$. For
$f\in\dot B^{s,q}_{p,F}$,
\begin{align}
 \|f-f_{K,L}\|_{\dot B^{s,q}_{p,F}}
 &\leq C\bigl(\omega_{f,q}(K)+\tau_{f,q}(K,L)\bigr),
 \label{eq:approximation-error}\\
 \|f_{K,L}\|_{\dot B^{s,q}_{p,F}}
 &\leq C\|f\|_{\dot B^{s,q}_{p,F}}.
 \label{eq:approximation-uniform}
\end{align}
For fixed $K$, $\tau_{f,q}(K,L)\to0$ as $L\to\infty$, and
$f_{K,L}$ converges weakly to $f$ in $\mathcal S_F'$; for finite
$q$ it converges in norm.
\end{proposition}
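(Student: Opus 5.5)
The plan is to write both $f$ and $f_{K,L}$ through the synthesis map $T_\psi$ of Definition~\ref{def:ST} and reduce everything to the coefficient estimate of Theorem~\ref{thm:retraction}. Put $t=S_\psi f$. Let $\chi_{K,L}$ be the indicator of the index set $\{(J,R):|J|_1\le K,\ |z_R|\le L\}$. Then $f_{K,L}=T_\psi(\chi_{K,L}t)$, and by the exact formula \eqref{eq:exactCRF}, $f=T_\psi t$. Hence
\[
 f-f_{K,L}=T_\psi\bigl((1-\chi_{K,L})t\bigr).
\]
The synthesis bound \eqref{eq:synthesis-bound} gives
\[
 \|f-f_{K,L}\|_{\dot B^{s,q}_{p,F}}\le C\|(1-\chi_{K,L})t\|_{\dot b^{s,q}_{p,F}} .
\]
This bound is valid for infinite sequences: it was extended by truncation and Fatou.

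Next split $(1-\chi_{K,L})t=t\mathbf 1_{\{|J|_1>K\}}+t\mathbf 1_{\{|J|_1\le K,\,|z_R|>L\}}$. The two pieces have disjoint scale supports, so their $\dot b$ quasi-norm is at most a constant times the sum of the two quasi-norms. For $q=\infty$ it is simply the maximum. The second piece is exactly $\tau_{f,q}(K,L)$. For the first piece, apply the analysis estimate \eqref{eq:analysis-block-expanded} blockwise: each block of $t$ has sampled norm at most $C\|\psi_{j,k}*f\|_{L^p}$, uniformly in $J$. Summing only over $|J|_1>K$ gives $C\omega_{f,q}(K)$. This proves \eqref{eq:approximation-error}. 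The uniform bound \eqref{eq:approximation-uniform} follows the same way: $\|\chi_{K,L}t\|_{\dot b}\le\|t\|_{\dot b}\le C\|f\|$ by the boundedness of $S_\psi$, followed by $T_\psi$.

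For fixed $K$ only finitely many $J$ appear in $\tau_{f,q}(K,L)$. For each such $J$, the block sum $\sum_R|t_{J,R}|^p|R|$ is finite, since it is bounded by $C\|\psi_J*f\|_p^p<\infty$. So its tail over $|z_R|>L$ tends to $0$ by dominated convergence, and $\tau_{f,q}(K,L)\to0$. For weak convergence, I would pair with $h\in\mathcal S_F$. The absolute-convergence estimate \eqref{eq:synthesis-summability-expanded} shows that
\[
 \sum_{J,R}|R|\,|t_{J,R}|\,|\langle\psi_J(\cdot-z_R),h\rangle|<\infty .
\]
Then $\langle f-f_{K,L},h\rangle$ is a tail of this absolutely convergent series over the complement of a set that exhausts the whole index set as $K,L\to\infty$. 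It therefore tends to $0$. This holds for every $q$, including $q=\infty$, since \eqref{eq:synthesis-summability-expanded} only uses \eqref{eq:coefficient-growth}.

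For finite $q$, norm convergence needs $\omega_{f,q}(K)\to0$, which is the tail of a convergent $\ell^q$ series. The main point requiring care is the order of limits in the statement. If $L$ is coupled to $K$, one picks $L=L(K)$ with $\tau_{f,q}(K,L(K))\le 2^{-K}$. If $K,L$ tend to infinity jointly, one bounds $\tau_{f,q}(K,L)$ uniformly in $K$ by a dominated convergence argument over the full $\ell^q$ sum. Here the summable majorant is $2^{-js_1}2^{-(j+k)s_2}\|t_J\|_{L^p}$ raised to the power $q$, and the inner tails decrease to $0$ for each $J$. This gives norm convergence, and I expect it to be the only delicate step.
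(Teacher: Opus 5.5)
Your proposal is correct and follows the paper's proof. You write $f-f_{K,L}$ as $T_\psi$ applied to the discarded coefficients and split these into the scale tail, bounded by $C\omega_{f,q}(K)$ through the single-scale sampling bound, and the spatial tail, which equals $\tau_{f,q}(K,L)$. You get weak convergence from the absolute convergence of synthesis. The differences are cosmetic. You combine the two pieces in the coefficient space using their disjoint scale supports, where the paper uses the $\rho$-triangle inequality after synthesis. Your dominated-convergence bound on $\tau_{f,q}(K,L)$, uniform in $K$, makes the joint limit $K,L\to\infty$ explicit, whereas the paper only indicates the iterated limit.
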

\begin{proof}
With $E_{K,L}$ the finite set of labels in
\eqref{eq:finite-sum},
write the discarded sequence as
\begin{align}\label{eq:tail-splitting-expanded}
 t-\mathbf1_{E_{K,L}}t&=t^{\mathrm{sc}}+t^{\mathrm{sp}},\notag\\
 t^{\mathrm{sc}}_{J,R}&=\mathbf1_{\{|J|_1>K\}}t_{J,R},\qquad
 t^{\mathrm{sp}}_{J,R}
   =\mathbf1_{\{|J|_1\leq K\}}\mathbf1_{\{|z_R|>L\}}t_{J,R}.
\end{align}
Lemma~\ref{lem:single-scale} and the definition of the spatial
tail imply
\[
 \|t^{\mathrm{sc}}\|_{\dot b^{s,q}_{p,F}}
      \leq C\omega_{f,q}(K),\qquad
 \|t^{\mathrm{sp}}\|_{\dot b^{s,q}_{p,F}}
      =\tau_{f,q}(K,L).
\]
For $\rho=\min(1,p,q)$, with $q$ omitted when $q=\infty$, bounded
synthesis therefore gives
\begin{align}\label{eq:tail-error-expanded}
 \|f-f_{K,L}\|_{\dot B^{s,q}_{p,F}}^\rho
 &=\|T_\psi(t^{\mathrm{sc}}+t^{\mathrm{sp}})\|_{\dot B^{s,q}_{p,F}}^\rho\notag\\
 &\leq C\bigl(\|t^{\mathrm{sc}}\|_{\dot b^{s,q}_{p,F}}^\rho
            +\|t^{\mathrm{sp}}\|_{\dot b^{s,q}_{p,F}}^\rho\bigr)\notag\\
 &\leq C\bigl(\omega_{f,q}(K)^\rho+\tau_{f,q}(K,L)^\rho\bigr).
\end{align}
Taking the $\rho$th root proves \eqref{eq:approximation-error}.
The uniform bound follows from
\[
 \|f_{K,L}\|_{\dot B^{s,q}_{p,F}}
 \leq C\|\mathbf1_{E_{K,L}}S_\psi f\|_{\dot b^{s,q}_{p,F}}
 \leq C\|S_\psi f\|_{\dot b^{s,q}_{p,F}}
 \leq C\|f\|_{\dot B^{s,q}_{p,F}}.
\] For
fixed $K$ there are finitely many scales and each spatial
$p$-sum converges, so its tail tends to zero. The weak convergence follows
from the absolute convergence of synthesis, and for finite $q$
the scale tail of the defining sum tends to zero.
\end{proof}

\begin{remark}\label{rem:rate}
If $\omega_{f,\infty}(K)\leq C_0 2^{-\varepsilon K}$ and
\begin{equation}\label{eq:rate-cond}
 \sup_J 2^{-js_1}2^{-(j+k)s_2}
  \left(\sum_{R\in\mathcal R_{j,k}^N}
      (1+|z_R|)^{\gamma p}|t_{J,R}|^p|R|\right)^{1/p}
       \leq C_1
\end{equation}
for some $\varepsilon,\gamma>0$, then
$\|f-f_{K,L}\|_{\dot B^{s,\infty}_{p,F}}
 \leq C\bigl(C_0 2^{-\varepsilon K}+C_1(1+L)^{-\gamma}\bigr)$.
\end{remark}

We are now ready to prove Theorem~\ref{thm:little}. The limit defining
$d(f)$ exists, since the tail supremum is decreasing. If
$g\in\mathcal S_F$, then $\omega_{g,\infty}(K)\to0$ by
\eqref{eq:testcoeff}; with $\tau=\min(1,p)$, the $\tau$-triangle
inequality at each scale gives
\begin{equation}\label{eq:lower-distance}
 \omega_{f,\infty}(K)^\tau
 \leq \|f-g\|_{\dot B^{s,\infty}_{p,F}}^\tau
                   +\omega_{g,\infty}(K)^\tau.
\end{equation}
Letting $K\to\infty$ and taking the infimum over $g$ proves the
first inequality in \eqref{eq:distance}. Conversely,
$f_{K,L}\in\mathcal S_F$ and \eqref{eq:approximation-error} imply
\begin{align}\label{eq:upper-distance-expanded}
 \inf_{g\in\mathcal S_F}\|f-g\|_{\dot B^{s,\infty}_{p,F}}
 &\leq\liminf_{L\to\infty}\|f-f_{K,L}\|_{\dot B^{s,\infty}_{p,F}}\notag\\
 &\leq C\omega_{f,\infty}(K)\longrightarrow C d(f)
       \qquad(K\to\infty).
\end{align} The distance is zero exactly
on the norm closure, which gives \eqref{eq:little}; the last
assertion follows because the closure is unchanged under
equivalent quasi-norms.

\section{Continuous duality: proofs of Theorems~\ref{thm:duality-all} and~\ref{thm:decomposition}}
\label{sec:duality}

\begin{lemma}\label{lem:sequence-duality}
The norm \eqref{eq:exact-dual} equals
\begin{equation}\label{eq:finite-dual-test}
 \sup\bigl\{|[t,d]_\psi|:\ t\text{ finite support},\
                    \|t\|_{\dot b^{s,q}_{p,F}}\leq1\bigr\}.
\end{equation}
Every $d$ with finite associate norm defines an absolutely
convergent bounded pairing on $\dot b^{s,q}_{p,F}$. If
$q<\infty$, every continuous functional on $\dot b^{s,q}_{p,F}$
is represented uniquely in this way with equality of norms. If
$q=\infty$, every continuous functional on
\begin{equation}\label{eq:little-sequences}
 b_0=\left\{t\in\dot b^{s,\infty}_{p,F}:
       2^{-js_1}2^{-(j+k)s_2}\left(\sum_R|t_{J,R}|^p|R|\right)^{1/p}
                   \longrightarrow0\quad (|J|_1\to\infty)\right\}
\end{equation}
is represented uniquely in this way, again with equality of
norms.
\end{lemma}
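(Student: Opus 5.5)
The plan is to reduce everything to the duality of weighted mixed-norm sequence spaces $\ell^q(\ell^p)$, one block at a time. Fix $J$ and write $w_J=2^{-js_1}2^{-(j+k)s_2}$. The first step is to compute the block duality exactly. For a finitely supported block $\{t_{J,R}\}_R$ we have
\[
 \sup\Bigl\{\Bigl|\sum_R|R|t_{J,R}\overline{d_{J,R}}\Bigr|:\Bigl(\sum_R|t_{J,R}|^p|R|\Bigr)^{1/p}\le1\Bigr\}=D_J(d).
\]
For $1<p<\infty$ this is H\"older's inequality in $L^p(\mu)$ with $\mu=\sum_R|R|\delta_R$, which is sharp. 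For $0<p\le1$, the constraint gives $|t_{J,R}||R|^{1/p}\le1$ for every $R$, and the embedding $\ell^p\subset\ell^1$ gives
\[
 \sum_R|R||t_{J,R}||d_{J,R}|=\sum_R\bigl(|R|^{1/p}|t_{J,R}|\bigr)\bigl(|R|^{1-1/p}|d_{J,R}|\bigr)\le\Bigl(\sum_R|t_{J,R}|^p|R|\Bigr)^{1/p}\sup_R|R|^{1-1/p}|d_{J,R}|.
\]
This is attained in the limit by a single atom $t=|R|^{-1/p}\operatorname{sgn}(d_{J,R})\delta_R$ placed at a near-maximising $R$. In both cases the optimisers can be taken finitely supported, with the usual truncation when $p>1$.

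Next I treat the outer index. Put $\lambda_J=w_J(\sum_R|t_{J,R}|^p|R|)^{1/p}$. Then
\[
 |[t,d]_\psi|\le\sum_J\lambda_J\,w_J^{-1}D_J(d)\le\|\lambda\|_{\ell^q}\,\|\{w_J^{-1}D_J(d)\}\|_{\ell^{q^*}},
\]
by H\"older for $q>1$, and by $\ell^q\subset\ell^1$ together with a sup bound for $q\le1$ (this is where $q^*=\infty$ comes from). Since all sums are of absolute values, this inequality also gives absolute convergence and boundedness on the whole space. To see that the bound is sharp, choose scalar weights $\lambda_J\ge0$ on a finite set of $J$ that nearly extremise the scalar $\ell^q$--$\ell^{q^*}$ pairing; for $q\le1$ this is one scale, and for $q=\infty$ it is $\lambda_J=1$ on a finite set. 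On each chosen block, place a near-extremal block from the first step with phases aligned. This yields the equality between \eqref{eq:exact-dual} and \eqref{eq:finite-dual-test}. The case $D_J(d)=\infty$ or an infinite outer norm is handled the same way, since the sup then exceeds any bound.

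For representation, let $\Lambda$ be a continuous functional on $\dot b^{s,q}_{p,F}$ with $q<\infty$, or on $b_0$ with $q=\infty$. Define $d_{J,R}=\overline{\Lambda(|R|^{-1}e_{J,R})}$, where $e_{J,R}$ is the unit coordinate sequence. Then $\Lambda(t)=[t,d]_\psi$ for finite $t$, and the first part gives $\|d\|_\times=\|\Lambda|_{\mathrm{finite}}\|\le\|\Lambda\|$. The remaining step, which I expect to be the main obstacle, is density of finite sequences, with care in the quasi-Banach range. For $p,q<\infty$, truncation converges in quasi-norm by dominated convergence in the inner and outer sums. In $b_0$, I first cut off scales $|J|_1>K$, which is small by the defining vanishing condition, and then truncate spatially within the finitely many remaining blocks. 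Continuity of $\Lambda$ and absolute convergence of $[t,d]_\psi$ then extend the identity to all $t$, which gives equality of norms. Uniqueness follows because $d$ is determined by its values on the $e_{J,R}$.
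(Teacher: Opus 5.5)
Your proposal is correct and follows essentially the same route as the paper. It uses blockwise H\"older, or the $\ell^p\subset\ell^1$ sup bound when $p\le1$, then the outer $\ell^q$--$\ell^{q^*}$ pairing with (near-)extremisers on finite sets, the coordinate functionals $d_{J,R}=|R|^{-1}\overline{\Lambda(\mathbf e^{J,R})}$, and density of finite sequences in $\dot b^{s,q}_{p,F}$ for $q<\infty$ and in $b_0$ for $q=\infty$. The only cosmetic difference is that the paper first normalises to unweighted $\ell^q(\ell^p)$ via $v_{J,R}=2^{-js_1}2^{-(j+k)s_2}|R|^{1/p}t_{J,R}$ and $e_{J,R}=2^{js_1}2^{(j+k)s_2}|R|^{1-1/p}d_{J,R}$, whereas you carry the weights through directly.
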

\begin{proof}
The change of variables
\begin{equation}\label{eq:change-variables}
 v_{J,R}=2^{-js_1}2^{-(j+k)s_2}|R|^{1/p}t_{J,R},\qquad
 e_{J,R}=2^{js_1}2^{(j+k)s_2}|R|^{1-1/p}d_{J,R}
\end{equation}
turns the primal norm into $\ell^q(\ell^p)$, the proposed dual
norm into $\ell^{q^*}(\ell^{p^*})$, and the pairing into
$\sum_{J,R}v_{J,R}\overline{e_{J,R}}$. For $p>1$, H\"older
within each block gives the upper estimate. For $p\leq1$,
\begin{equation}\label{eq:small-p-block}
 \sum_R|v_{J,R}e_{J,R}|
 \leq\sup_R|e_{J,R}|
                    \left(\sum_R|v_{J,R}|^p\right)^{1/p}.
\end{equation}
Set $A_J=\|v_{J,\cdot}\|_{\ell^p}$ and
$E_J=\|e_{J,\cdot}\|_{\ell^{p^*}}$. The outer estimate is
\begin{equation}\label{eq:outer-duality-expanded}
 \sum_{J,R}|v_{J,R}e_{J,R}|
 \leq\sum_J A_JE_J
 \leq\|\{A_J\}\|_{\ell^q}\|\{E_J\}\|_{\ell^{q^*}}.
\end{equation}
For $q\leq1$, the last step is
$\sum_J A_J\leq(\sum_J A_J^q)^{1/q}$; the other cases are
H\"older's inequality, including $\ell^\infty$--$\ell^1$.

We also give the reverse estimate explicitly. Work first on a
finite set of coordinates and write $\operatorname{sgn}z=z/|z|$
for $z\ne0$, with $\operatorname{sgn}0=0$. For each nonzero block
choose a vector $b_J$ by
\begin{equation}\label{eq:block-maximizer-expanded}
 b_{J,R}=
 \begin{cases}
 \operatorname{sgn}(e_{J,R})|e_{J,R}|^{p'-1}/E_J^{p'-1},&p>1,\\
 \operatorname{sgn}(e_{J,R_J})\mathbf1_{\{R=R_J\}},&p\leq1,
 \end{cases}
\end{equation}
where $|e_{J,R_J}|=E_J$ in the second case. Then
\[
 \|b_J\|_{\ell^p}=1,\qquad
 \sum_R b_{J,R}\overline{e_{J,R}}=E_J.
\]
Zero blocks are assigned $b_J=0$. For $1<q<\infty$ take
$a_J=E_J^{q'-1}/\|E\|_{\ell^{q'}}^{q'-1}$; for $q\leq1$ take
$a_J=\mathbf1_{\{J=J_*\}}$ at a largest block; for $q=\infty$
take $a_J=1$ on the finite set of nonzero blocks. In every case,
\begin{equation}\label{eq:double-maximizer-expanded}
 v_{J,R}=a_Jb_{J,R},\qquad
 \|v\|_{\ell^q(\ell^p)}=1,\qquad
 \sum_{J,R}v_{J,R}\overline{e_{J,R}}
                  =\|E\|_{\ell^{q^*}}.
\end{equation}
The zero sequence is immediate. Increasing the finite sets
proves \eqref{eq:finite-dual-test}.

If $\Lambda$ is a bounded functional and $\mathbf e^{J,R}$
is the sequence equal to one at $(J,R)$ and zero elsewhere, set
\begin{equation}\label{eq:coordinate-functional-expanded}
 d_{J,R}=|R|^{-1}\overline{\Lambda(\mathbf e^{J,R})}.
\end{equation}
For every finitely supported $t$, linearity and the finite-set
identity give
\[
 \Lambda(t)=[t,d]_\psi,\qquad
 \|d\|_{(\dot b^{s,q}_{p,F})^\times}
 =\sup_{\substack{t\text{ finite}\\\|t\|\leq1}}|\Lambda(t)|
 \leq\|\Lambda\|.
\] If $q<\infty$, finite
sequences are dense and the representation extends. If
$q=\infty$, finite sequences are dense precisely in $b_0$, using
the vanishing scale tail and then $p<\infty$ within the remaining
blocks.
\end{proof}

\begin{proposition}\label{prop:associate-retraction}
The synthesis series $T_\psi d$ is well defined in
$\mathcal S_F'$ for every
$d\in(\dot b^{s,q}_{p,F})^\times$, with absolute pairings
against test functions, and
\begin{equation}\label{eq:associate-synthesis}
 \|T_\psi d\|_{(\dot B^{s,q}_{p,F})^\times}
                 \leq C\|d\|_{(\dot b^{s,q}_{p,F})^\times}.
\end{equation}
The distribution space \eqref{eq:distribution-dual} is Banach
and independent of the admissible pair and the fixed grid. For
$f\in\dot B^{s,q}_{p,F}$ and
$g\in(\dot B^{s,q}_{p,F})^\times$, the pairing
\eqref{eq:duality-pairing} converges absolutely, agrees with the
distribution pairing when either entry is a test function, and is
independent of both choices, including when $q=\infty$.
\end{proposition}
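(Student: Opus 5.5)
The plan is to mirror the proof of Theorem~\ref{thm:retraction}, with the associate coefficient norm in place of $\dot b^{s,q}_{p,F}$. The key fact is a matrix estimate for the composite $S_\varphi T_\psi$ on associate sequences. For $d$ with finite associate norm, the coefficient growth bound corresponding to \eqref{eq:coefficient-growth} reads
\[
 |d_{J,R}|\leq 2^{-js_1}2^{-(j+k)s_2}|R|^{-1/p^*}D_J(d)\,2^{\ldots},
\]
where $|R|^{1-1/p}$ is read for $p\le1$. This bound is polynomially exponential in $|J|_1$, exactly as before. Together with \eqref{eq:testcoeff} and \eqref{eq:lattice-counting}, it gives absolute convergence of $\sum|R||d_{J,R}||\langle\psi_J(\cdot-z_R),h\rangle|$ by a seminorm of $h$. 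Hence $T_\psi d\in\mathcal S_F'$ is well defined and continuous.

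For \eqref{eq:associate-synthesis}, I would compute $S_\psi T_\psi d$ coordinatewise:
\[
 (\psi_J*T_\psi d)(z_R)=\sum_{J',R'}|R'|(\psi_J*\psi_{J'})(z_R-z_{R'})d_{J',R'}.
\]
By \eqref{eq:almost}, only $|J-J'|$ bounded contributes, with kernel $P_{A,B,M}$.

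For $1<p<\infty$, I would bound the associate blocks through the maximal estimate. Lemma~\ref{lem:grid} with $r$ close to $1$ does not give $L^{p'}$ directly. Instead I would use $r<1$ with $p'/r>1$ and apply Lemma~\ref{lem:maximal}, giving $D_J(S_\psi T_\psi d)\le C\sum_{J'}2^{-L|J-J'|}D_{J'}(d)$.

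For $p\le1$, I would bound the sup-type block directly. Since $\sum_{R'}|R'|P_{A,B,M}(z-z_{R'})\le C$ uniformly by a Riemann-sum estimate, we get $\sup_R|(\ldots)|\le C\sum_{J'}2^{-L|J-J'|}\sup_{R'}|d_{J',R'}|$. Multiplying by $|R|^{1-1/p}$ and comparing volumes via \eqref{eq:Rvolume} costs only $2^{C|J-J'|}$, which $L$ absorbs. The smoothness weights are handled by \eqref{eq:smoothness-transfer}. The outer $\ell^{q^*}$ bound then follows by Young's inequality on $\mathbb Z^2$, with $q^*\ge1$ always. The main obstacle is this $p\le1$ volume bookkeeping: the target and source rectangles have different shapes, and the factor $(|R|/|R'|)^{1-1/p}$ must be controlled by the finite band of $|J-J'|$. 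Because \eqref{eq:finite-overlap-expanded} confines $J'$ to a fixed neighbourhood of $J$, this should be harmless.

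Completeness of $(\dot B^{s,q}_{p,F})^\times$ follows as in \eqref{eq:completeness-expanded}. This needs $T_\psi S_\psi g=g$ for $g\in\mathcal S_F'$, which is \eqref{eq:exactCRF}, and completeness of the associate sequence space, a weighted mixed Lebesgue space. Independence of the pair and grid follows from the same matrix bound applied to $S_\varphi T_\psi$ on another grid, using the remark at the end of Lemma~\ref{lem:grid}.

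For the pairing, absolute convergence of $[S_\psi f,S_\psi g]_\psi$ is Lemma~\ref{lem:sequence-duality} combined with Theorem~\ref{thm:retraction}; this includes $q=\infty$, where $q^*=1$. If $g=\phi\in\mathcal S_F$, I would expand $f=T_\psi S_\psi f$ weakly and use absolute convergence to get
\[
 \langle f,\phi\rangle=\sum|R|(\psi_J*f)(z_R)\overline{\langle\phi,\psi_J(\cdot-z_R)\rangle}.
\]
Evenness and realness of $\psi$ identify the last factor with $(\psi_J*\phi)(z_R)$. The case where $f$ is a test function is symmetric, via the representation $g=T_\psi S_\psi g$, whose pairing with $\phi$ converges absolutely by the first step.

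Independence of the choices uses a density argument on the weak side. The pairings with $\psi$ and with $\varphi$ agree for $f=f_{K,L}\in\mathcal S_F$, since both equal $\langle f_{K,L},g\rangle$. I would then pass to the limit using Proposition~\ref{prop:quantitative-approximation}. For $q<\infty$, norm convergence and boundedness of both pairings suffice. For $q=\infty$, I would instead use dominated convergence in the absolutely convergent double series. The coefficients $S_\varphi f_{K,L}=S_\varphi T_\psi(\mathbf 1_{E}S_\psi f)$ are dominated by the matrix operator applied to $|S_\psi f|$, which pairs absolutely with $|S_\varphi g|$ by the estimates above, and they converge pointwise as $K,L\to\infty$.
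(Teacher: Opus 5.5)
Your proof is correct, but its central step takes a different route from the paper's.

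\textbf{How the paper proves the associate bound.} The paper never estimates the associate side by hand. It sets $P_\psi=S_\psi T_\psi$ and uses the real even kernels to get the transpose identity $[t,P_\psi d]_\psi=[P_\psi t,d]_\psi$ for finite sequences. The norming formula of Lemma~\ref{lem:sequence-duality} then gives
$\|P_\psi d\|_\times\le\sup_{\|t\|\le1}\|P_\psi t\|_{\dot b^{s,q}_{p,F}}\|d\|_\times\le C\|d\|_\times$,
using only the primal bound from Theorem~\ref{thm:retraction}; finite restrictions and Fatou handle general $d$. The cross maps $S_\varphi T_\psi$ and $S_\psi T_\varphi$ are treated the same way. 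Independence of the pairing is then a direct Fubini argument once \eqref{eq:cross-abs} is available.

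\textbf{How your route differs.} You prove the associate bound directly.
\begin{itemize}
\item For $p>1$ this is just Lemma~\ref{lem:matrix} for $\dot b^{-s,q^*}_{p',F}$. Incidentally, $r=1$ is allowed in Lemma~\ref{lem:grid} and $M_F$ is bounded on $L^{p'}$, so you do not need to go below $1$.
\item For $p\le1$ you use the Riemann-sum bound $\sum_{R'}|R'|P_{A,B,M}(z-z_{R'})\le C$. The volume ratio $(|R_J|/|R_{J'}|)^{1-1/p}\le2^{D(1/p-1)|J-J'|_1}$ is then absorbed by the factor $2^{-L|J-J'|_1}$.
\item For pairing independence you approximate by $f_{K,L}$ and apply dominated convergence. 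The dominating series is exactly the absolute bound \eqref{eq:cross-abs}, which your positive-kernel estimates supply via Tonelli.
\end{itemize}

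\textbf{What each approach buys.} The paper's transposition is shorter and needs no case split in $p$. It recycles the single primal estimate, which is the point stressed in the abstract. Your route works for infinite $d$ without a Fatou step. It also exposes the volume factor $|R|^{1-1/p}$ that drives the smoothness shift in Corollary~\ref{cor:small-p-duality}. Your density detour for the pairing is longer than the paper's Fubini step, since once the absolute bound is known the cross transpose identity gives equality immediately.

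One minor point: your displayed growth bound is garbled. The intended estimate is \eqref{eq:dual-growth}, namely $|d_{J,R}|\le2^{-js_1}2^{-(j+k)s_2}|R|^{1/p-1}\|d\|_\times$.
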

\begin{proof}
Every associate sequence satisfies
\begin{equation}\label{eq:dual-growth}
 |d_{J,R}|\leq 2^{-js_1}2^{-(j+k)s_2}|R|^{1/p-1}
                   \|d\|_{(\dot b^{s,q}_{p,F})^\times},
\end{equation}
which, together with \eqref{eq:testcoeff} and the lattice
counting bound, gives the absolute convergence of the synthesis
series against test functions exactly as in the proof of
Theorem~\ref{thm:retraction}. Set $P_\psi=S_\psi T_\psi$. For
finitely supported sequences the symmetry of the real even
kernels gives
\begin{equation}\label{eq:transpose}
                  [t,P_\psi d]_\psi=[P_\psi t,d]_\psi.
\end{equation}
The kernel identity behind this transposition is
\begin{align}\label{eq:transpose-matrix-expanded}
 (P_\psi d)_{I,Q}
 &=\sum_{J,R}|R|\,d_{J,R}
          (\psi_I*\psi_J)(z_Q-z_R),\notag\\
 [t,P_\psi d]_\psi
 &=\sum_{I,Q,J,R}|Q||R|\,t_{I,Q}\overline{d_{J,R}}
          (\psi_I*\psi_J)(z_Q-z_R)\notag\\
 &=[P_\psi t,d]_\psi.
\end{align}
Thus Lemma~\ref{lem:sequence-duality} gives
\begin{align}\label{eq:transpose-bound-expanded}
 \|P_\psi d\|_\times
 &=\sup_{\substack{t\text{ finite}\\\|t\|_{\dot b^{s,q}_{p,F}}\leq1}}
                           |[t,P_\psi d]_\psi|\notag\\
 &\leq\sup_{\substack{t\text{ finite}\\\|t\|_{\dot b^{s,q}_{p,F}}\leq1}}
            \|P_\psi t\|_{\dot b^{s,q}_{p,F}}\|d\|_\times
 \leq C\|d\|_\times.
\end{align}
The calculation starts with finite restrictions of $d$;
Fatou's lemma and the absolute distributional convergence
then give the estimate for arbitrary associate sequences. Since
$P_\psi^2=P_\psi$ and $T_\psi S_\psi=I$, the range of $P_\psi$
is identified with \eqref{eq:distribution-dual}, which is
therefore complete and definite.

For another admissible pair $\varphi$, the cross matrix
$S_\varphi T_\psi$ is transposed to $S_\psi T_\varphi$ under the
volume pairings; the finite-sequence argument bounds both
cross maps on the associate spaces, and applying this to
$g=T_\psi S_\psi g=T_\varphi S_\varphi g$ gives the independence
of norms. For the full $q=\infty$ range the estimates in
Lemma~\ref{lem:grid} use absolute values throughout, so the
positive cross matrix is bounded on the primal spaces as well;
hence for primal $t$ and associate $d$,
\begin{equation}\label{eq:cross-abs}
 \sum_{I,Q}\sum_{J,R}|Q||R|\,|t_{J,R}|\,|d_{I,Q}|\,
       |(\varphi_I*\psi_J)(z_Q-z_R)|
 \leq C\|t\|_{\dot b^{s,q}_{p,F}(\psi)}
       \|d\|_{(\dot b^{s,q}_{p,F}(\varphi))^\times},
\end{equation}
which justifies the cross transpose identity for arbitrary
sequences. Agreement with the distribution pairing follows by
pairing the exact formula, which converges in $\mathcal S_F$ for
test entries.
\end{proof}

We now give the proof of Theorem~\ref{thm:duality-all}. An element of the
associate space defines a bounded functional by
Proposition~\ref{prop:associate-retraction}. Conversely, for
$q<\infty$ and $\ell\in(\dot B^{s,q}_{p,F})^*$, the composition
$\ell\circ T_\psi$ is a bounded functional on
$\dot b^{s,q}_{p,F}$; Lemma~\ref{lem:sequence-duality} gives $d$
with $\ell(T_\psi t)=[t,d]_\psi$ and
$\|d\|_\times\leq C\|\ell\|$. Set $g=T_\psi d$. On test
functions,
\begin{equation}\label{eq:represent}
 \langle f,g\rangle_F=[S_\psi f,d]_\psi
           =\ell(T_\psi S_\psi f)=\ell(f);
\end{equation}
the density of $\mathcal S_F$ extends this to the whole space,
which proves \eqref{eq:full-finite-dual}. At $q=\infty$,
Theorem~\ref{thm:little} and Lemma~\ref{lem:single-scale} give
$S_\psi B_0\subset b_0$, while finite sequences are dense in
$b_0$ and bounded synthesis gives $T_\psi b_0\subset B_0$; the
same argument therefore proves \eqref{eq:little-dual}.
In both cases the two norm estimates are
\begin{equation}\label{eq:dual-norms-expanded}
 \|\ell_g\|\leq C\|g\|_{(\dot B^{s,q}_{p,F})^\times},\qquad
 \|g\|_{(\dot B^{s,q}_{p,F})^\times}
 \leq C\|d\|_{(\dot b^{s,q}_{p,F})^\times}
 \leq C\|\ell\|.
\end{equation}
Two representatives agreeing on $\mathcal S_F$ are equal in
$\mathcal S_F'$, which proves uniqueness.

We next prove Theorem~\ref{thm:decomposition}. Restricting $\ell$
to $B_0$ gives a unique $g\in B^\times$ with
$\|g\|_{B^\times}\leq C\|\ell\|$, and $\ell_g$ is defined on all
of $B$ by Proposition~\ref{prop:associate-retraction}. Then
$\ell_0=\ell-\ell_g$ vanishes on $B_0$, and
\begin{align}\label{eq:decomposition-bounds-expanded}
 \|\ell_0\|&\leq\|\ell\|+\|\ell_g\|
             \leq(1+C)\|\ell\|,\notag\\
 \|\ell\|&\leq\|\ell_g\|+\|\ell_0\|
             \leq C\|g\|_{B^\times}+\|\ell_0\|.
\end{align}
These are the two directions of \eqref{eq:decomp-norm}.
If $\ell=\ell_h+L_0$ is a second decomposition, restriction gives
\[
 \ell_{g-h}|_{B_0}=0\quad\Longrightarrow\quad g=h
 \quad\Longrightarrow\quad \ell_0=L_0.
\]
The implication uses the uniqueness already proved for $B_0^*$.
Frame independence follows from that of the canonical pairing.

For an example with a nonzero annihilator, take $p=2$ and $s=0$.
Choose a nonzero $\rho\in C_c^\infty$ supported in a small region
with $\eta\ne0$ on which $|\widehat\psi_{J_0}|\geq c>0$. For a
sufficiently large integer $L$ put
\begin{equation}\label{eq:example}
 \widehat h_\nu(\zeta)=2^{-L\nu D/2}\rho(2^{-L\nu}\zeta),
 \qquad \nu\geq1,\qquad f=\sum_{\nu\geq1}h_\nu.
\end{equation}
Choose $L$ so that each joint-frequency annulus meets at most
one of these supports. They escape to infinity, so their sum
defines a tempered distribution. Under our Fourier convention,
Plancherel and the substitution $\zeta=2^{L\nu}\omega$ give
\begin{align}\label{eq:counterexample-normalization-expanded}
 \|h_\nu\|_2^2
 &=(2\pi)^{-D}2^{-L\nu D}
               \int|\rho(2^{-L\nu}\zeta)|^2\,d\zeta
   =(2\pi)^{-D}\|\rho\|_2^2,\notag\\
 \sup_J\|\psi_J*f\|_2&\leq C\|\rho\|_2,\notag\\
 \|\psi_{J_0+(L\nu,L\nu)}*f\|_2^2
 &=(2\pi)^{-D}\int
       |\widehat\psi_{J_0}(\omega)|^2|\rho(\omega)|^2\,d\omega
   \geq c^2(2\pi)^{-D}\|\rho\|_2^2.
\end{align}
Consequently $f\in B$ but $d(f)>0$, hence $f\notin B_0$.
The quotient $B/B_0$ is a nonzero Banach space. Hahn--Banach
provides $\Lambda\in(B/B_0)^*$ with $\Lambda(f+B_0)\ne0$; the
functional $L_0(h)=\Lambda(h+B_0)$ belongs to $B_0^\perp$ and is
nonzero.

\begin{proof}[Proof of Corollary~\ref{cor:duality}]
For $p>1$,
$D_J(d)=(\sum_R|R||d_{J,R}|^{p'})^{1/p'}$, so the associate
coefficient norm is exactly $\dot b^{-s,q^*}_{p',F}$; the
discrete characterisation identifies its distribution space
with $\dot B^{-s,q^*}_{p',F}$, and \eqref{eq:little-dual-p}
follows from the outer $\ell^1$ norm.
\end{proof}

\begin{proof}[Proof of Corollary~\ref{cor:small-p-duality}]
For a nonzero bump function, $|R|\asymp_N2^{-jn}2^{-km}$, so with
$1-1/p=-\delta$,
\begin{equation}\label{eq:weight-identity}
 2^{js_1}2^{(j+k)s_2}|R|^{1-1/p}
 \asymp
 2^{j(s_1+s_2+n\delta)}2^{k(s_2+m\delta)}.
\end{equation}
Matching the coefficients of $j$ and $k$ gives
\begin{equation}\label{eq:small-p-linear-system-expanded}
 -(t_1+t_2)=s_1+s_2+n\delta,\qquad
 -t_2=s_2+m\delta,
\end{equation}
and therefore
\[
 t_2=-s_2-m\delta,\qquad
 t_1=-s_1+(m-n)\delta.
\]
Thus the right-hand side of \eqref{eq:weight-identity} is
$2^{-jt_1}2^{-(j+k)t_2}$ for the stated $t$. By
Lemma~\ref{lem:Linfty-sampling},
\begin{equation}\label{eq:small-p-associate}
 \|S_\psi g\|_{(\dot b^{s,q}_{p,F})^\times}
          \asymp\|g\|_{\dot B^{t,q^*}_{\infty,F}},
\end{equation}
and Theorem~\ref{thm:duality-all} gives the asserted duals.
\end{proof}

\section{Interpolation: proofs of Theorems~\ref{thm:real-interpolation} and~\ref{thm:complex-interpolation}}
\label{sec:interpolation}

All couples are compatible in $\mathcal S_F'$. We use one
admissible pair and one grid fine enough for every parameter in
the stated identities, so the analysis and synthesis maps are
the same at both endpoints and $T_\psi S_\psi=I$ on their sum.

For a compatible quasi-Banach couple $(X_0,X_1)$,
$K(t,x;X_0,X_1)=\inf_{x=x_0+x_1}(\|x_0\|_{X_0}+t\|x_1\|_{X_1})$,
and the real interpolation quasi-norm is
$\|x\|_{(X_0,X_1)_{\theta,r}}
 =(\int_0^\infty[t^{-\theta}K(t,x)]^r\,dt/t)^{1/r}$.

We first prove Theorem~\ref{thm:real-interpolation}. For a
labelled sequence $t$ put
\[
 u_J=2^{-js_1}2^{-(j+k)s_2}
        \left(\sum_R|t_{J,R}|^p|R|\right)^{1/p},
 \qquad E_i=\dot b^{s,q_i}_{p,F}.
\]
We claim
\begin{equation}\label{eq:block-K}
 K(\lambda,t;E_0,E_1)
        \asymp K(\lambda,u;\ell^{q_0},\ell^{q_1}).
\end{equation}
For a nonnegative sequence $u$, the scalar $K$-functional can
be computed using positive decompositions $u=a+b$. Indeed, any
decomposition $u=c+d$ can be replaced, coordinatewise, by
$a=u|c|/(|c|+|d|)$ and $b=u|d|/(|c|+|d|)$, so that
$a\leq|c|$ and $b\leq|d|$. Given $u=a+b$, define
\begin{equation}\label{eq:block-K-decomposition-expanded}
 t^0_{J,R}=\frac{a_J}{u_J}t_{J,R},\qquad
 t^1_{J,R}=\frac{b_J}{u_J}t_{J,R},
\end{equation}
with both terms zero when $u_J=0$. By homogeneity in each block,
\[
 \|t^0\|_{E_0}=\|a\|_{\ell^{q_0}},\qquad
 \|t^1\|_{E_1}=\|b\|_{\ell^{q_1}},
\]
and hence
\begin{equation}\label{eq:block-K-upper-expanded}
 K(\lambda,t;E_0,E_1)
 \leq\inf_{u=a+b}\bigl(\|a\|_{\ell^{q_0}}
                      +\lambda\|b\|_{\ell^{q_1}}\bigr)
 =K(\lambda,u;\ell^{q_0},\ell^{q_1}).
\end{equation}
Conversely, for $t=t^0+t^1$ let $v_J,z_J$ be the corresponding
weighted block norms. The quasi-triangle inequality gives
$u_J\leq C_p(v_J+z_J)$. Therefore
\[
 a_J=\frac{u_Jv_J}{v_J+z_J}\leq C_pv_J,\qquad
 b_J=\frac{u_Jz_J}{v_J+z_J}\leq C_pz_J,\qquad u=a+b,
\]
where zero denominators are assigned zero. It follows that
\begin{align}\label{eq:block-K-lower-expanded}
 K(\lambda,u;\ell^{q_0},\ell^{q_1})
 &\leq\|a\|_{\ell^{q_0}}+\lambda\|b\|_{\ell^{q_1}}\notag\\
 &\leq C_p\bigl(\|t^0\|_{E_0}+\lambda\|t^1\|_{E_1}\bigr).
\end{align}
Taking the infimum over all decompositions of $t$ proves
\eqref{eq:block-K}.

The scalar sequence identity
\begin{equation}\label{eq:scalar-real}
             (\ell^{q_0},\ell^{q_1})_{\theta,q}=\ell^q,
 \qquad \frac1q=\frac{1-\theta}{q_0}+\frac\theta{q_1},
\end{equation}
can be read directly from the rearrangement formula. Write $u^*$
for the decreasing rearrangement with respect to counting measure,
and put $\kappa=(1/q_0-1/q_1)^{-1}$. For $q_1<\infty$,
Holmstedt's formula \cite{Holmstedt70}, also discussed in
\cite[Chapter~5]{BL76}, gives
\begin{align}\label{eq:holmstedt-expanded}
 K(\lambda,u;\ell^{q_0},\ell^{q_1})
 &\asymp\left(\int_0^{\lambda^\kappa}u^*(v)^{q_0}\,dv\right)^{1/q_0}
 \notag\\
 &\quad+\lambda\left(\int_{\lambda^\kappa}^\infty
                         u^*(v)^{q_1}\,dv\right)^{1/q_1}.
\end{align}
When $q_1=\infty$, this becomes
\begin{equation}\label{eq:holmstedt-supremum-expanded}
 K(\lambda,u;\ell^{q_0},\ell^\infty)
 \asymp\left(\int_0^{\lambda^{q_0}}u^*(v)^{q_0}\,dv\right)^{1/q_0}.
\end{equation}
The corresponding Hardy inequalities and the displayed relation
between $q,q_0,q_1$ yield
\begin{align}\label{eq:real-rearrangement-integral-expanded}
 \int_0^\infty[\lambda^{-\theta}K(\lambda,u)]^q
                         \frac{d\lambda}{\lambda}
 &\asymp\int_0^\infty[v^{1/q}u^*(v)]^q\frac{dv}{v}\notag\\
 &=\int_0^\infty u^*(v)^q\,dv=\sum_Ju_J^q.
\end{align}
These formulas also apply when $q_0<1$, and prove
\eqref{eq:scalar-real} with equivalent quasi-norms. Together
with \eqref{eq:block-K}, they give
$(E_0,E_1)_{\theta,q}=\dot b^{s,q}_{p,F}$.

The endpoint bounds on $S_\psi,T_\psi$ give
\begin{equation}\label{eq:retraction-K}
 K(\lambda,S_\psi f;E_0,E_1)
                  \asymp K(\lambda,f;B_0,B_1),
\end{equation}
where $B_i=\dot B^{s,q_i}_{p,F}$. More explicitly,
\begin{align}\label{eq:retraction-K-bounds-expanded}
 K(\lambda,S_\psi f;E_0,E_1)
 &\leq\max_{i=0,1}\|S_\psi\|_{B_i\to E_i}
                      K(\lambda,f;B_0,B_1),\notag\\
 K(\lambda,f;B_0,B_1)
 &\leq\max_{i=0,1}\|T_\psi\|_{E_i\to B_i}
                      K(\lambda,S_\psi f;E_0,E_1).
\end{align}
The first line analyses a decomposition of $f$; the second
synthesises a decomposition of $S_\psi f$, using $T_\psi S_\psi f=f$.
Thus
\begin{align}\label{eq:real-norm-chain-expanded}
 \|f\|_{(B_0,B_1)_{\theta,q}}
 &\asymp\|S_\psi f\|_{(E_0,E_1)_{\theta,q}}
 \asymp\|S_\psi f\|_{\dot b^{s,q}_{p,F}}
 \asymp\|f\|_{\dot B^{s,q}_{p,F}},
\end{align}
which proves \eqref{eq:real-interpolation}.

We now prove Theorem~\ref{thm:complex-interpolation}. The
coefficient spaces satisfy
\begin{equation}\label{eq:complex-sequence}
 [\dot b^{s^0,q_0}_{p_0,F},\dot b^{s^1,q_1}_{p_1,F}]_\theta
 =\dot b^{s,q}_{p,F}
\end{equation}
with the parameters in \eqref{eq:interpolated-parameters}.
We give the analytic family in the mixed-sequence version of
the Calder\'on construction; see \cite[Sections 5.1 and 5.6]{BL76}.
On the strip $0\leq\Re z\leq1$, put
\begin{equation}\label{eq:complex-family}
 \frac1{p(z)}=\frac{1-z}{p_0}+\frac z{p_1},\quad
 \frac1{q(z)}=\frac{1-z}{q_0}+\frac z{q_1},
\end{equation}
and define
\begin{equation}\label{eq:complex-weight}
 w_z(J)=
 \bigl(2^{-js_1^0}2^{-(j+k)s_2^0}\bigr)^{1-z}
 \bigl(2^{-js_1^1}2^{-(j+k)s_2^1}\bigr)^z.
\end{equation}
Take a finite sequence $t$ with $\|t\|_{\dot b^{s,q}_{p,F}}=1$, and set
\begin{equation}\label{eq:complex-polar-data-expanded}
 v_{J,R}=w_\theta(J)|R|^{1/p}t_{J,R},\qquad
 A_J=\left(\sum_R|v_{J,R}|^p\right)^{1/p},\qquad
 \sum_J A_J^q=1.
\end{equation}
On nonzero coordinates define
\begin{equation}\label{eq:analytic-coefficient-family-expanded}
 F_{J,R}(z)=w_z(J)^{-1}|R|^{-1/p(z)}
       \operatorname{sgn}(v_{J,R})
       |v_{J,R}|^{p/p(z)}A_J^{q/q(z)-p/p(z)},
\end{equation}
and set $F_{J,R}=0$ on zero coordinates. Positive bases are
raised to complex powers using their real logarithms. Then
$F(\theta)=t$, and for $i=0,1$ and $y\in\mathbb R$,
\begin{align}\label{eq:analytic-boundary-block-expanded}
 &w_i(J)\left(\sum_R|F_{J,R}(i+\mathrm i y)|^{p_i}|R|\right)^{1/p_i}\notag\\
 &\quad=\left(\sum_R|v_{J,R}|^p
                   A_J^{p_iq/q_i-p}\right)^{1/p_i}
       =A_J^{q/q_i},\notag\\
 &\|F(i+\mathrm i y)\|_{\dot b^{s^i,q_i}_{p_i,F}}
       =\left(\sum_J A_J^q\right)^{1/q_i}=1.
\end{align}
If the interpolation class requires decay as $|\Im z|\to\infty$,
replace $F(z)$ by $e^{\varepsilon(z-\theta)^2}F(z)$ and then let
$\varepsilon\downarrow0$. Thus
\[
 \|t\|_{[\dot b^{s^0,q_0}_{p_0,F},\dot b^{s^1,q_1}_{p_1,F}]_\theta}
 \leq\|t\|_{\dot b^{s,q}_{p,F}}.
\]

For the reverse bound, write $E_i=\dot b^{s^i,q_i}_{p_i,F}$ and
$E=\dot b^{s,q}_{p,F}$. Let $F$ be any admissible analytic family
with $F(\theta)=t$. For a finite $d\in E^\times$ with
$\|d\|_{E^\times}\leq1$, apply the same construction to the dual
parameters $p_i',q_i',-s^i$, obtaining $D(\theta)=d$ and
$\sup_y\|D(i+\mathrm i y)\|_{E_i^\times}\leq1$ (up to an arbitrarily
small damping factor). The scalar function
\[
 \Phi(z)=\sum_{J,R}|R|F_{J,R}(z)
                           \overline{D_{J,R}(\bar z)}
\]
is analytic because $D$ has finite support. On the boundary,
\begin{equation}\label{eq:complex-dual-boundary-expanded}
 |\Phi(i+\mathrm i y)|
 \leq\|F(i+\mathrm i y)\|_{E_i}\|D(i-\mathrm i y)\|_{E_i^\times}
 \leq\|F(i+\mathrm i y)\|_{E_i}.
\end{equation}
The three-lines theorem and finite-sequence duality yield
\begin{align}\label{eq:complex-reverse-expanded}
 \|t\|_E
 &=\sup_{\substack{d\text{ finite}\\\|d\|_{E^\times}\leq1}}
                      |[t,d]_\psi|\notag\\
 &\leq\left(\sup_y\|F(\mathrm i y)\|_{E_0}\right)^{1-\theta}
       \left(\sup_y\|F(1+\mathrm i y)\|_{E_1}\right)^\theta.
\end{align}
Taking the infimum over $F$ proves the reverse estimate.
Finite sequences are dense in $E$; applying the first estimate
to differences of truncations extends it to every $t\in E$.
This proves \eqref{eq:complex-sequence}.

The fixed linear maps $S_\psi,T_\psi$ act boundedly between the
complex interpolation spaces by their endpoint bounds. If
$f\in[B_0,B_1]_\theta$, then
$S_\psi f\in\dot b^{s,q}_{p,F}$ by \eqref{eq:complex-sequence},
and $T_\psi S_\psi f=f$ gives
$\|f\|_{\dot B^{s,q}_{p,F}}\leq C\|f\|_{[B_0,B_1]_\theta}$.
Conversely, if $f\in\dot B^{s,q}_{p,F}$, its analysis belongs
to $[\dot b^{s^0,q_0}_{p_0,F},\dot b^{s^1,q_1}_{p_1,F}]_\theta$,
and synthesis gives the opposite bound.

\section{Lifting: proof of Theorem~\ref{thm:lifting}}
\label{sec:lifting}

Multiplication by $m_\alpha$ and $m_{-\alpha}$ is continuous on
the Fourier transforms of $\mathcal S_F$: these are Schwartz and
flat on $\eta=0$ by \eqref{eq:partialmoments}, so the fixed power
singularities of the multiplier are absorbed by Taylor's formula
and its polynomial growth by Schwartz decay. Thus $\mathcal D^\alpha$
is defined on $\mathcal S_F'$ by transposition, and
$m_\alpha m_{-\alpha}=1$ away from $\eta=0$.

Let $\psi,\varphi$ be admissible pairs, $J=(j,k)$, $L=(u,v)$, and
put
\begin{equation}\label{eq:lifting-kernel-def}
 H^\alpha_{J,L}=\mathcal F^{-1}
                [\widehat\psi_J m_\alpha\widehat\varphi_L].
\end{equation}

\begin{lemma}\label{lem:lifting-localisation}
The kernel $H^\alpha_{J,L}$ vanishes unless
$|j-u|+|k-v|\leq C_0$. With $A=\max(a_j,a_u)$ and
$B=\max(b_{j,k},b_{u,v})$, for every $M,E>0$,
\begin{equation}\label{eq:lifting-localisation}
 |H^\alpha_{J,L}(x,y)|
 \leq C\,2^{j(\alpha_1+\alpha_2)+k\alpha_2}
       2^{-E(|j-u|+|k-v|)} P_{A,B,M}(x,y).
\end{equation}
\end{lemma}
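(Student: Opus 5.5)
The plan is to repeat the argument that gave the almost-orthogonality estimate \eqref{eq:almost}, inserting the multiplier $m_\alpha$ and tracking its size on the common Fourier support.

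First, the support restriction. The symbol $\widehat\psi_J m_\alpha\widehat\varphi_L$ vanishes unless both products of the joint-frequency annuli and of the $y$-frequency annuli are nonempty. By \eqref{eq:finite-overlap-expanded} this forces $|j-u|\leq2$ and $|k-v|\leq2$, so we may take $C_0=4$. Once this is known, the factor $2^{-E(|j-u|+|k-v|)}$ costs only a constant depending on $E$, exactly as in the proof of \eqref{eq:almost}.

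Second, the rescaling. For the remaining pairs, substitute $\xi=A^{-1}\xi'$ and $\eta=B^{-1}\eta'$, and write
\[
 H^\alpha_{J,L}(x,y)=A^{-n}B^{-m}G(x/A,y/B),
\]
where $\widehat G(\xi',\eta')=(\widehat\psi_J m_\alpha\widehat\varphi_L)(A^{-1}\xi',B^{-1}\eta')$.

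On the support we have $|(\xi,\eta)|\asymp2^j$ and $|\eta|\asymp2^k$, with $k\leq j+1$ so that $B\asymp2^{-k}$. We factor
\[
 m_\alpha=2^{j(\alpha_1+\alpha_2)+k\alpha_2}\,\tilde m,\qquad
 \tilde m(\xi,\eta)=\bigl(2^{-j}|(\xi,\eta)|\bigr)^{\alpha_1+\alpha_2}\bigl(2^{-k}|\eta|\bigr)^{\alpha_2}.
\]
The key claim is that $\tilde m$, composed with the rescaling, has derivatives of every order bounded uniformly in $J,L$ on a fixed box. The second factor depends only on $\eta'$. On the support $|\eta'|$ is bounded above and below, so that factor is smooth there with uniform bounds.

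The first factor needs care: $|(\xi,\eta)|=|(A^{-1}\xi',B^{-1}\eta')|$ is anisotropic after rescaling. It is nevertheless bounded below by $c2^j$ on the support, because the joint annulus is separated from the origin. Each $\xi'$-derivative brings a factor $A^{-1}/|(\xi,\eta)|\lesssim1$. Each $\eta'$-derivative brings a factor $B^{-1}/|(\xi,\eta)|\lesssim2^{k-j}\lesssim1$, using $a_j/b_{j,k}\leq1$. Hence all derivatives of $(2^{-j}|(\xi,\eta)|)^{\alpha_1+\alpha_2}$ in the rescaled variables are uniformly bounded. The rescaled $\widehat\psi_J,\widehat\varphi_L$ satisfy the same bounds, as already used for \eqref{eq:almost}.

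Third, the decay estimate. With $\widehat G$ smooth, uniformly bounded in every $C^N$ norm and supported in a fixed box, integrate by parts separately in $\xi'$ and $\eta'$. This gives
\[
 |G(x',y')|\leq C(1+|x'|)^{-n-M}(1+|y'|)^{-m-M}.
\]
Undoing the scaling yields exactly $P_{A,B,M}$ multiplied by the scalar factor $2^{j(\alpha_1+\alpha_2)+k\alpha_2}$.

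The main obstacle is the second step: the uniform derivative bounds for the non-product factor $|(\xi,\eta)|^{\alpha_1+\alpha_2}$ under anisotropic rescaling. The point to stress there is that $|(\xi,\eta)|\gtrsim\max(A^{-1},B^{-1})$ on the support, which is why each derivative costs at most a constant.
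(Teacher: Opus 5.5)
Your proposal is correct and follows the paper's own proof. Both arguments use the finite overlap of the annuli, normalise $m_\alpha$ by $2^{j(\alpha_1+\alpha_2)+k\alpha_2}$ so that the symbol has uniform derivative bounds on one fixed compact set after the anisotropic rescaling (the $\eta$-derivatives are controlled by $2^{k-j}\lesssim1$), integrate by parts separately in the two frequency variables, and then undo the scaling. The only cosmetic difference is that you rescale directly by $A,B$, whereas the paper rescales by $a_j,b_{j,k}$ and then uses $a\asymp A$, $b\asymp B$ on the finite overlap.
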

\begin{proof}
On a nonempty intersection the annular supports give finite
overlap and $b_{j,k}\asymp2^{-k}$. Consider the normalised symbol
\begin{equation}\label{eq:normalised-symbol}
 G_{J,L}(\zeta,\tau)
 =2^{-j(\alpha_1+\alpha_2)-k\alpha_2}
 \widehat\psi_J(\zeta/a,\tau/b)
 m_\alpha(\zeta/a,\tau/b)
 \widehat\varphi_L(\zeta/a,\tau/b),
\end{equation}
where $a=a_j$, $b=b_{j,k}$. The homogeneity of $m_\alpha$ and
the annular restrictions give, for all multi-indices,
\begin{equation}\label{eq:symbol-derivatives}
 |\partial_\xi^\gamma\partial_\eta^\delta m_\alpha(\xi,\eta)|
 \leq C\,2^{j(\alpha_1+\alpha_2)+k\alpha_2}
       2^{-j|\gamma|}2^{-k|\delta|},
\end{equation}
including for negative powers, since the annular norms are bounded
away from zero. The change of variables, including the cutoff derivatives,
therefore gives
\begin{equation}\label{eq:normalized-derivative-uniform-expanded}
 \sup_{J,L}\|\partial_\zeta^\gamma\partial_\tau^\delta
                      G_{J,L}\|_{L^\infty}<\infty,
 \qquad\operatorname{supp}G_{J,L}\subset K,
\end{equation}
where $K$ is one fixed compact set. Writing
$g_{J,L}=\mathcal F^{-1}G_{J,L}$ and integrating by parts separately
in the two frequency variables, we obtain
\begin{equation}\label{eq:inverse-symbol-decay-expanded}
 |g_{J,L}(X,Y)|
 \leq C_{M_1,M_2}(1+|X|)^{-M_1}(1+|Y|)^{-M_2}.
\end{equation}
The Fourier scaling now gives
\begin{align}\label{eq:lifting-kernel-scaling-expanded}
 H^\alpha_{J,L}(x,y)
 &=2^{j(\alpha_1+\alpha_2)+k\alpha_2}
                       a^{-n}b^{-m}g_{J,L}(x/a,y/b),\notag\\
 |H^\alpha_{J,L}(x,y)|
 &\leq C\,2^{j(\alpha_1+\alpha_2)+k\alpha_2}
       \frac{a^M}{(a+|x|)^{n+M}}
       \frac{b^M}{(b+|y|)^{m+M}}.
\end{align}
On the finite overlap $a\asymp A$ and $b\asymp B$.
Inserting the factor $2^{-E(|j-u|+|k-v|)}$, with a changed
constant, proves \eqref{eq:lifting-localisation}.
\end{proof}

We now prove Theorem~\ref{thm:lifting}. Applying $\psi_J$ to the
exact $\varphi$ reproducing formula gives
\begin{equation}\label{eq:lifting-action}
 \psi_J*\mathcal D^\alpha f(z)
 =\sum_{|J-L|_1\leq C_0}
     \sum_{R\in\mathcal R_L^N}
        |R|(\varphi_L*f)(z_R)H^\alpha_{J,L}(z-z_R).
\end{equation}
Multiplying the kernel estimate by the weight at smoothness
$s+\alpha$, the multiplier factor cancels exactly:
\begin{equation}\label{eq:lifting-cancel}
 2^{-j(s_1+\alpha_1)}2^{-(j+k)(s_2+\alpha_2)}
       2^{j(\alpha_1+\alpha_2)+k\alpha_2}
       =2^{-js_1}2^{-(j+k)s_2}.
\end{equation}
Lemma~\ref{lem:matrix} and the discrete characterisation give
\begin{equation}\label{eq:lifting-bound}
 \|\mathcal D^\alpha f\|_{\dot B^{s+\alpha,q}_{p,F}}
 \leq C\left\|\left\{2^{-us_1}2^{-(u+v)s_2}
       \sum_{R\in\mathcal R_L^N}
               |(\varphi_L*f)(z_R)|\mathbf1_R\right\}_L
                                             \right\|_{\ell^q(L^p)}
 \leq C\|f\|_{\dot B^{s,q}_{p,F}}.
\end{equation}
Apply \eqref{eq:lifting-bound} with $-\alpha$ and smoothness
$s+\alpha$. Since $\mathcal D^{-\alpha}\mathcal D^\alpha=I$
on $\mathcal S_F'$, we obtain
\begin{align}\label{eq:lifting-inverse-expanded}
 \|f\|_{\dot B^{s,q}_{p,F}}
 &=\|\mathcal D^{-\alpha}\mathcal D^\alpha f\|_{\dot B^{s,q}_{p,F}}
 \leq C\|\mathcal D^\alpha f\|_{\dot B^{s+\alpha,q}_{p,F}},\notag\\
 h\in\dot B^{s+\alpha,q}_{p,F}
 &\quad\Longrightarrow\quad f=\mathcal D^{-\alpha}h\in\dot B^{s,q}_{p,F},
 \qquad\mathcal D^\alpha f=h.
\end{align}
For the little spaces, take $f_\nu\in\mathcal S_F$ with
$\|f-f_\nu\|_{\dot B^{s,\infty}_{p,F}}\to0$. Then
\[
 \mathcal D^\alpha f_\nu\in\mathcal S_F,\qquad
 \|\mathcal D^\alpha f-\mathcal D^\alpha f_\nu\|_{\dot B^{s+\alpha,\infty}_{p,F}}
 \leq C\|f-f_\nu\|_{\dot B^{s,\infty}_{p,F}}\longrightarrow0.
\]
The argument with $-\alpha$ gives the reverse inclusion.

\appendix
\section{Mixed-norm comparisons and Hardy-space embeddings}
\label{app:mixed}

Use one admissible pair and set
$G_J=2^{-js_1}2^{-(j+k)s_2}|\psi_{j,k}*f|$. The flag Triebel--Lizorkin norm is
\begin{equation}\label{eq:Fdefinition}
 \|f\|_{\dot F^{s,q}_{p,F}}=\|\{G_J\}\|_{L^p(\ell^q)}.
\end{equation}

\begin{proposition}\label{prop:FB-embeddings}
Let $s\in\mathbb R^2$, $0<p<\infty$ and $0<q\leq\infty$. Then
\begin{equation}\label{eq:FB-embeddings}
 \dot B^{s,\min(p,q)}_{p,F}
       \hookrightarrow\dot F^{s,q}_{p,F}
       \hookrightarrow\dot B^{s,\max(p,q)}_{p,F},
\end{equation}
and $\dot F^{s,p}_{p,F}=\dot B^{s,p}_{p,F}$. Moreover, for
$0<q_0\leq q_1\leq\infty$,
$\dot F^{s,q_0}_{p,F}\hookrightarrow\dot F^{s,q_1}_{p,F}$ and
$\dot B^{s,q_0}_{p,F}\hookrightarrow\dot B^{s,q_1}_{p,F}$.
\end{proposition}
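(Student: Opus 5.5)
The plan is to argue pointwise and blockwise, using only the embeddings $\ell^{a}\hookrightarrow\ell^{b}$ for $a\le b$ together with Minkowski's inequality in its two directions. No frame or maximal machinery should be needed, since both norms are computed with the same admissible pair and the same functions $G_J=2^{-js_1}2^{-(j+k)s_2}|\psi_{j,k}*f|$. (The space $\dot F^{s,q}_{p,F}$ is defined with a fixed pair, so this comparison is legitimate as stated.)

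First, the monotonicity statements. For $q_0\le q_1$ we have pointwise $\|\{G_J(z)\}\|_{\ell^{q_1}}\le\|\{G_J(z)\}\|_{\ell^{q_0}}$. Taking $L^p$ norms gives the $\dot F$ embedding. Applying the same sequence inequality to the scalar sequence $\{\|G_J\|_{L^p}\}$ gives the $\dot B$ embedding. The identity $\dot F^{s,p}_{p,F}=\dot B^{s,p}_{p,F}$ is Fubini/Tonelli, because
\[
\int\sum_J G_J^p=\sum_J\int G_J^p .
\]

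Next, the embeddings \eqref{eq:FB-embeddings}. I split into the cases $q\le p$ and $q\ge p$.

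If $q\ge p$, then $\min(p,q)=p$, so the left embedding $\dot B^{s,p}_{p,F}=\dot F^{s,p}_{p,F}\hookrightarrow\dot F^{s,q}_{p,F}$ follows from the $\dot F$ monotonicity. For the right embedding we need $\|\{\|G_J\|_{L^p}\}\|_{\ell^q}\le\|\{G_J\}\|_{L^p(\ell^q)}$. Write this with $r=q/p\ge1$ as
\[
\bigl\|\{\|G_J^p\|_{L^1}\}\bigr\|_{\ell^{r}}\le\Bigl\|\,\|\{G_J^p\}\|_{\ell^{r}}\Bigr\|_{L^1},
\]
which is Minkowski's integral inequality (the norm of an integral is at most the integral of the norm). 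The case $q=\infty$ is the trivial bound $\|G_J\|_p\le\|\sup_{J'}G_{J'}\|_p$.

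If $q\le p$, then $\max(p,q)=p$, so the right embedding $\dot F^{s,q}_{p,F}\hookrightarrow\dot F^{s,p}_{p,F}=\dot B^{s,p}_{p,F}$ is again monotonicity. For the left embedding we need $\|\{G_J\}\|_{L^p(\ell^q)}\le\|\{\|G_J\|_{L^p}\}\|_{\ell^q}$. With $r=p/q\ge1$ this reads
\[
\Bigl\|\sum_J G_J^q\Bigr\|_{L^{r}}\le\sum_J\|G_J^q\|_{L^{r}},
\]
which is the triangle inequality in $L^{r}$.

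The only point needing care, and the main (mild) obstacle, is the endpoint bookkeeping. When $q=\infty$ the embedding $\dot F^{s,\infty}_{p,F}\hookrightarrow\dot B^{s,\infty}_{p,F}$ uses the pointwise supremum rather than Minkowski. One must also check that all quantities are extended-valued, so that the inequalities hold whether or not the norms are finite. This is automatic, since every step is an inequality between nonnegative measurable quantities and Tonelli applies.
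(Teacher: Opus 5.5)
Your proof is correct and is essentially the paper's argument: the paper uses the same ingredients, namely the triangle inequality in $L^{p/q}$ for $q\le p$, Minkowski's integral inequality in $\ell^{q/p}$ for $p\le q<\infty$, the pointwise supremum at $q=\infty$, Tonelli for $q=p$, and the pointwise inclusion $\ell^{q_0}\subset\ell^{q_1}$ for monotonicity. The only difference is presentational: you obtain the two easy embeddings by composing $\dot F$-monotonicity with $\dot F^{s,p}_{p,F}=\dot B^{s,p}_{p,F}$, whereas the paper writes out the same pointwise $\ell^p$--$\ell^q$ inclusion estimates directly.
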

\begin{proof}
If $q\leq p$, Minkowski's inequality in $L^{p/q}$ and the
pointwise inclusion $\ell^q\subset\ell^p$ give
\begin{align}\label{eq:mixed-q-small-expanded}
 \|\{G_J\}\|_{L^p(\ell^q)}^q
 &=\left\|\sum_JG_J^q\right\|_{L^{p/q}}
 \leq\sum_J\|G_J^q\|_{L^{p/q}}
 =\sum_J\|G_J\|_{L^p}^q,\notag\\
 \sum_J\|G_J\|_{L^p}^p
 &=\int\sum_JG_J^p
 \leq\int\left(\sum_JG_J^q\right)^{p/q}
 =\|\{G_J\}\|_{L^p(\ell^q)}^p.
\end{align}
These are the two embeddings for $q\leq p$. If $p\leq q<\infty$,
the pointwise inclusion $\ell^p\subset\ell^q$ and Minkowski's
integral inequality in $\ell^{q/p}$ give
\begin{align}\label{eq:mixed-p-small-expanded}
 \|\{G_J\}\|_{L^p(\ell^q)}^p
 &=\int\left(\sum_JG_J^q\right)^{p/q}
 \leq\int\sum_JG_J^p
 =\sum_J\|G_J\|_{L^p}^p,\notag\\
 \left\|\{\|G_J\|_{L^p}\}\right\|_{\ell^q}^p
 &=\left\|\left\{\int G_J^p\right\}_J\right\|_{\ell^{q/p}}\notag\\
 &\leq\int\|\{G_J(z)^p\}\|_{\ell^{q/p}}\,dz
 =\|\{G_J\}\|_{L^p(\ell^q)}^p.
\end{align}
At $q=\infty$ the required estimates are
\begin{equation}\label{eq:mixed-infinity-expanded}
 \sup_J\int G_J^p
 \leq\int\sup_JG_J^p
 \leq\int\sum_JG_J^p=\sum_J\int G_J^p.
\end{equation}
All applications of Minkowski have exponent at least one.
For $q=p$, Tonelli gives the exact equality of the two defining
norms. Finally, if $q_0\leq q_1$ then
\[
 \|\{G_J(z)\}\|_{\ell^{q_1}}\leq\|\{G_J(z)\}\|_{\ell^{q_0}},\qquad
 \|\{\|G_J\|_p\}\|_{\ell^{q_1}}
 \leq\|\{\|G_J\|_p\}\|_{\ell^{q_0}},
\]
which proves the two monotonicity assertions.
\end{proof}

\begin{corollary}\label{cor:Hardy-Besov}
For $0<p<\infty$,
\begin{equation}\label{eq:Hardy-Besov}
 \dot B^{0,\min(p,2)}_{p,F}
       \hookrightarrow H^p_F
       \hookrightarrow\dot B^{0,\max(p,2)}_{p,F},
\end{equation}
with $H^p_F=L^p$ for $1<p<\infty$. If
$0<q_0\leq2\leq q_1\leq\infty$, then
$\dot F^{0,q_0}_{p,F}\hookrightarrow H^p_F
 \hookrightarrow\dot F^{0,q_1}_{p,F}$.
\end{corollary}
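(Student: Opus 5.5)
The plan is to combine Proposition~\ref{prop:FB-embeddings} at $s=0$ with the flag Littlewood--Paley characterisation of $H^p_F$ from Han--Lu \cite{HL08} and \cite{HLW19}. That characterisation identifies $H^p_F$ with $\dot F^{0,2}_{p,F}$, with equivalent quasi-norms. Concretely, the flag square function is
\[
 g_F(f)=\Bigl(\sum_{j,k}|\psi_{j,k}*f|^2\Bigr)^{1/2},
\]
and the definition of $H^p_F$ says $f\in\mathcal S_F'$ belongs to $H^p_F$ exactly when $g_F(f)\in L^p$. This is precisely $\|f\|_{\dot F^{0,2}_{p,F}}$ with $G_J=|\psi_{j,k}*f|$.

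First I check that the analysing function there may be taken to be our admissible pair. If the cited definition uses a different Littlewood--Paley family, I replace it by ours through a Plancherel--P\'olya comparison of $L^p(\ell^2)$ type. That comparison follows from the almost-orthogonality \eqref{eq:almost}, Lemma~\ref{lem:grid}, and the vector-valued Fefferman--Stein inequality for $M_F$. The vector-valued inequality comes from the domination \eqref{eq:flag-domination}, since it iterates two classical Fefferman--Stein inequalities.

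The case $1<p<\infty$ requires $H^p_F=L^p$. Here I would invoke the flag Littlewood--Paley inequality $\|g_F(f)\|_p\asymp\|f\|_p$ from \cite{HL08}. An alternative route is vector-valued Calder\'on--Zygmund theory applied twice: first the isotropic $\ell^2$-valued square function in $\mathbb R^{n+m}$, then the $y$-variable square function, using Fubini. The converse inequality follows by duality from the $L^2$ identity \eqref{eq:L2isometry}.

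Granting $H^p_F=\dot F^{0,2}_{p,F}$, the main embeddings are \eqref{eq:FB-embeddings} with $q=2$, namely
\[
 \dot B^{0,\min(p,2)}_{p,F}\hookrightarrow\dot F^{0,2}_{p,F}\hookrightarrow\dot B^{0,\max(p,2)}_{p,F}.
\]
The $\dot F$ assertion follows from the monotonicity in Proposition~\ref{prop:FB-embeddings}:
\[
 \dot F^{0,q_0}_{p,F}\hookrightarrow\dot F^{0,2}_{p,F}\hookrightarrow\dot F^{0,q_1}_{p,F}\qquad\text{for }q_0\leq2\leq q_1.
\]
One point to check is that the ambient space is $\mathcal S_F'$ throughout, so these are genuine inclusions of distribution spaces. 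When $p\leq1$, $H^p_F$ should be understood as its distributional realisation in $\mathcal S_F'$ (or as a completion), matching the cited definition.

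I expect the only substantive obstacle to be the identification $H^p_F=\dot F^{0,2}_{p,F}$ with our specific annular pair when $p\leq1$. For that I would rely on the discrete Calder\'on formula \eqref{eq:exactCRF} together with the $\ell^2$-valued version of the Peetre and maximal estimates of Lemma~\ref{lem:peetre}. These give independence of the analysing function exactly as in the proof of Theorem~\ref{thm:retraction}.
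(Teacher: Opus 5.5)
Your proposal is correct and follows essentially the same route as the paper. Both identify $H^p_F$ with $\dot F^{0,2}_{p,F}$, and with $L^p$ for $p>1$, through the Han--Lu flag Littlewood--Paley theorem, and both reconcile the sampled and continuous square functions using Lemma~\ref{lem:grid}, the Peetre estimate and a vector-valued maximal inequality. You then read off the embeddings from Proposition~\ref{prop:FB-embeddings} with $s=0$, $q=2$ and its monotonicity in $q$, exactly as the paper does. Your iteration of the domination \eqref{eq:flag-domination} to obtain the vector-valued inequality for $M_F$ is a valid way to supply the step the paper attributes to \cite{FS71}.
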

\begin{proof}
The flag Littlewood--Paley theorem \cite{HL08,HL10} gives
$\dot F^{0,2}_{p,F}=H^p_F$ and $H^p_F=L^p$ for $p>1$; the
agreement of the sampled and continuous square functions
follows from Lemma~\ref{lem:grid}, the Peetre estimate and the
vector-valued maximal inequality \cite{FS71}. Applying
Proposition~\ref{prop:FB-embeddings} with $s=0,q=2$ gives the
stated embeddings.
\end{proof}

\end{document}